\newcolumntype{M}{>{$}c<{$}}
\begin{document}

\newtheorem{theorem}{Theorem}[section]
\newtheorem{corollary}[theorem]{Corollary}
\newtheorem{lemma}[theorem]{Lemma}
\newtheorem{proposition}[theorem]{Proposition}
\newtheorem{conjecture}[theorem]{Conjecture}
\newtheorem{commento}[theorem]{Comment}
\newtheorem{definition}[theorem]{Definition}
\newtheorem{problem}[theorem]{Problem}
\newtheorem{remark}[theorem]{Remark}
\newtheorem{remarks}[theorem]{Remarks}
\newtheorem{example}[theorem]{Example}
\newtheorem{assumption}[theorem]{Standing Assumption}
\newtheorem*{assumption*}{Standing Assumption}

\newcommand{\Nb}{{\mathbb{N}}}
\newcommand{\Rb}{{\mathbb{R}}}
\newcommand{\Tb}{{\mathbb{T}}}
\newcommand{\Zb}{{\mathbb{Z}}}
\newcommand{\Cb}{{\mathbb{C}}}

\newcommand{\Af}{\mathfrak A}
\newcommand{\Bf}{\mathfrak B}
\newcommand{\Ef}{\mathfrak E}
\newcommand{\Gf}{\mathfrak G}
\newcommand{\Hf}{\mathfrak H}
\newcommand{\Kf}{\mathfrak K}
\newcommand{\Lf}{\mathfrak L}
\newcommand{\Mf}{\mathfrak M}
\newcommand{\Rf}{\mathfrak R}

\newcommand{\x}{\mathfrak x}

\def\A{{\mathcal A}}
\def\B{{\mathcal B}}
\def\C{{\mathcal C}}
\def\D{{\mathcal D}}
\def\E{{\mathcal E}}
\def\F{{\mathcal F}}
\def\G{{\mathcal G}}
\def\H{{\mathcal H}}
\def\J{{\mathcal J}}
\def\K{{\mathcal K}}
\def\LL{{\mathcal L}}
\def\N{{\mathcal N}}
\def\M{{\mathcal M}}
\def\N{{\mathcal N}}
\def\OO{{\mathcal O}}
\def\P{{\mathcal P}}
\def\Q{{\mathcal Q}}
\def\SS{{\mathcal S}}
\def\T{{\mathcal T}}
\def\U{{\mathcal U}}
\def\W{{\mathcal W}}
\def\Z{{\mathcal Z}}

\def\ext{\operatorname{Ext}}
\def\spn{\operatorname{span}}
\def\clsp{\overline{\operatorname{span}}}
\def\Ad{\operatorname{Ad}}
\def\ad{\operatorname{Ad}}
\def\tr{\operatorname{tr}}
\def\id{\operatorname{id}}
\def\en{\operatorname{End}}
\def\aut{\operatorname{Aut}}
\def\out{\operatorname{Out}}
\def\coker{\operatorname{coker}}
\def\image{\operatorname{Im}}
\def\alg{{\mathcal Alg}}

\def\la{\langle}
\def\ra{\rangle}
\def\rh{\rightharpoonup}

\newcommand{\setof}[2]{\left\{ #1 \; \middle\vert \; #2 \right\}}
\newcommand{\vertexbox}[3]{\begin{array}{|c|c|} \hline \multicolumn{2}{|c|}{#1} \\ \hline #2 & #3 \\ \hline \end{array}}

\title{The Polynomial Endomorphisms of Graph Algebras}

\author{Rune Johansen\footnote{The first and the third author were supported by the Villum Fonden Research Network 
`Experimental mathematics in number theory, operator algebras and topology'.}, Adam P. W. S{\o}rensen and  
Wojciech Szyma{\'n}ski\footnote{This work was supported by  the DFF-Reesearch Project 2 on `Automorphisms and invariants of 
operator algebras', Nr. 7014--00145B.}}

\date{{\small \today}}

\maketitle

\renewcommand{\sectionmark}[1]{}

\vspace{7mm}
\begin{abstract}
We investigate polynomial endomorphisms of graph $C^*$-algebras and Leavitt path algebras. To this end, we define and analyze the coding graph 
corresponding to each such an endomorphism. We find an if and only if condition for the endomorphism to restrict to an automorphism of the diagonal MASA, which is stated in terms of synchronization of a certain labelling on the coding graph. We show that the dynamics induced this way on the space of infinite paths (the spectrum of the MASA) is generated by an asynchronous  transducer. 
\end{abstract}

\vfill\noindent {\bf MSC 2010}: 46L40, 16A72, 37B10

\vspace{3mm}
\noindent {\bf Keywords}: automorphism; endomorphism; graph algebra; Leavitt path algebra; MASA

\newpage

\section{Introduction}

The aim of this paper is to study a class of endomorphisms of both graph $C^*$-algebras and  Leavitt path algebras.
To describe the class and motivate why we study it, we recall some history.
In \cite{Cun2} Cuntz studied unital endomorphisms of the Cuntz algebras $\OO_n$ and showed that there is a one-to-one correspondence between unitaries in $\OO_n$ and unital endomorphisms of $\OO_n$. 
Trying to understand all endomorphisms of $\OO_n$ seems an impossible task.
So instead, Cuntz' result has been used in the following way: 
Consider some subclass of particularly nice unitaries of $\OO_n$ and then investigate the endomorphisms they induce.
Some of the crucial questions arising in this context are: 
Is the endomorphism an automorphism on the entire algebra? 
Does it restrict to an automorphism on the diagonal?
What is the corresponding classical dynamics on the Cantor set?
Since all unital endomorphisms of $\OO_n$ are injective, the automorphism questions simplify to asking if the endomorphism is onto the appropriate sets.

In \cite{CStams}, a study of endomorphisms coming from permutation unitaries in the AF-core is carried out and in \cite{CHSam} endomorphisms coming from so-called polynomial unitaries are consider. 
There are a variety of applications, see for instance \cite{bj}, \cite{i},  and \cite{Kaw2}.

Graph $C^*$-algebras are a wide reaching generalization of the Cuntz algebras. 
The generalization happened stepwise, but the first definitions covering all the graphs we will consider appeared in \cite{KPR}. 
See the excelent monograph \cite{RaeburnBook} for a thorough introduction to the theory.
Cuntz's idea of relating unitary endomorphisms and unitaries also works in the generality of graph $C^*$-algebras (\cite{CHSjfa}), and so again we can study the endomorphisms given by nice unitaries. 
In \cite{AJSz}, a study of the permutative endomorphisms of graph $C^*$-algebras is carried out. 

In this paper, we will look at polynomial endomorphisms of graph $C^*$-algebras.  
We begin by observing that the constructions in \cite{CHSjfa, Cun2} are of an entirely algebraic nature.
Therefore we carry out our investigation in the purely algebraic setting of of Leavitt path algebras, introduced in \cite{AAP, AMP}, as this costs very little but gives new results.       
Following the ideas of \cite{AJSz} we start with a polynomial unitary $u_\J \in L_\Zb(E)$ and its associated endomorphisms $\Lambda_\J$, and build from it the \emph{coding graph} $E_\J$. 
This a finite directed graph with multiple labellings. 
We have three main applications of the coding graph: 
we show how to use the coding graph to easily read of $\Lambda_\J(S_\alpha)$ for any path $\alpha$ (Lemma \ref{imageofpath}); 
we give an if and only if condition for $\Lambda_\J$ to restrict to an automorphism of the diagonal in terms of the coding graph (Theorem \ref{diagonalauto});
when $\Lambda_\J$ restricts to an automorphism it induces a self homeomorphism $\psi$ of the infinite path space $E^\infty$, we modify the coding graph to build an asynchronous transducer (see \cite{russian} and \cite{bb}) that describes the action of $\psi$ (Section \ref{transducers}).    

Our condition for restricting to an automorphism on the diagonal is fully parallel to the criterion found in \cite{AJSz} for permutative endomorphisms. 
Unfortunately we cannot present a criterion for invertibility of a polynomial endomorphism on the entire graph algebra. 
However, we believe that coding graphs provide the correct language in which such a criterion perhaps may be phrased. 
A belief backed up by the otherwise great explanative power of the coding graph. 

In the case of permutative automorphisms of the diagonal, the action they induce on the infinite path space is quite well understood and eventually commutes 
with the shift.  
The asynchronous transducer we build to implement the action of a polynomial automorphisms of the diagonal  cannot be described that way, since 
the action may not be eventually commuting with the shift.
Hence we get new dynamical phenomena.  
We envisage applications of this construction to investigations of endomorphisms and automorphisms of the Thompson groups, following up on the recent work in this direction in \cite{BHSz}. 

We end the paper with a section showing concrete examples of our constructions.  


\section{Preliminaries}

\subsection{Finite directed graphs and their algebras}

Let $E=(E^0,E^1,r,s)$ be a directed (multi)graph with {\em finite} sets of vertices $E^0$ and edges $E^1$, respectively. 
Let $r,s \colon E^1 \to E^0$ be the range and source maps, respectively. 
A {\em path} $\mu$ of length $|\mu|=k$ is a sequence $\mu=\mu_1\mu_2\ldots\mu_k$ of $k$ edges $\mu_j$ such that $r(\mu_j)=s(\mu_{j+1})$ for $j=1,\ldots,k-1$. 
For two paths $\mu, \nu$ we write $\mu \prec \nu$ to denote that $\mu$ is an initial segment of $\nu$.
We view the vertices as paths of length $0$.
The set of all paths of length $k$ is denoted $E^k$, and $E^*$ is the collection of all finite paths. 
The range and the source maps naturally extend to  maps $r,s \colon E^*\to E^0$. 
A vertex is called a {\em sink} if it  emits no edges and a {\em source} if it receives no edges. 
By a {\em cycle} we mean a path $\mu$ of non-zero length such that $r(\mu)=s(\mu)$.
A cycle $\mu=\mu_1\ldots\mu_k$ {\em has an exit} if there is a $j$ such that $s(\mu_j)$ emits at least two distinct edges. 
Throughout this paper we make the following standing assumption. 

\begin{assumption*}\rm 
All graphs $E$ we consider are finite, without sinks or sources, and such that every cycle has an exit. 
\end{assumption*}

\noindent To each such a graph we can associate a $C^*$-algebra.

\begin{definition}[See \cite{KPR}] \label{def:gca}
The $C^*$-algebra $C^*(E)$ corresponding to a graph $E$  is the universal $C^*$-algebra 
generated by mutually orthogonal projections $p_v$, $v\in E^0$, and partial isometries $s_e$, $e\in E^1$, 
subject to the following relations:
\begin{description}
\item{(GCA1)} $s_e^*s_e = p_{r(e)}$ for all $e\in E^1$, and, 
\item{(GCA2)} $p_v = \displaystyle{\sum_{s(e)=v}} s_e s_e^*$ for all $v\in E^0$. 
\end{description} 
\end{definition}

\noindent We warn readers that there are two competing conventions for the definition of $C^*(E)$. 
We use the opposite convention to that of Raeburn's monograph \cite{RaeburnBook}.

For a path $\mu=\mu_1\ldots\mu_k$, we denote by $s_\mu = s_{\mu_1} \cdots s_{\mu_k}$ the corresponding partial isometry in $C^*(E)$.
Each $s_\mu$ is non-zero and has the domain projection $p_{r(\mu)}$. 
For convenience, we agree to write $s_v = p_v$ for a $v \in E^0$, and $s_\emptyset=1$.
The $C^*$-algebra $C^*(E)$ equals the closed span of $\{s_\mu s_\nu^* \mid \mu,\nu \in E^*\}$. 
Note that $s_\mu s_\nu^*\neq 0$ if and only if $r(\mu)=r(\nu)$. 

The range projections $p_\mu = s_\mu s_\mu^*$ of all partial isometries $s_\mu$ mutually commute and generate 
the {\em diagonal subalgebra} $\D_{C^*(E)}$ of $C^*(E)$. We set 
\[
  \D_{C^*(E)}^k = \spn\{p_\mu \mid |\mu|= k\}, \quad \text{ for } k \in \Nb.
\]
Then $\D_{C^*(E)}$ coincides with the norm closure of $\bigcup_{k=0}^\infty\D_{C^*(E)}^k$. 
If $E$ does not contain sinks and all cycles have exits (and this is what we always assume in the present paper) then $\D_{C^*(E)}$ is a MASA (maximal abelian subalgebra) in $C^*(E)$, \cite[Theorem 5.2]{HPP}. 

There exists a strongly continuous action $\gamma$ of the circle group $U(1)$ on $C^*(E)$, called the {\em gauge action}, such that $\gamma_z(s_e)=zs_e$ and $\gamma_z(p_v)=p_v$ for all $e\in E^1$, $v\in E^0$ and $z\in U(1)\subseteq\Cb$. 
The fixed-point algebra $C^*(E)^\gamma$ for the gauge action is an $AF$-algebra, denoted $\F_{C^*(E)}$ and called the {\em core} $AF$-subalgebra of $C^*(E)$.
It is the closed span of $\{S_\mu S_\nu^* \mid \mu,\nu\in E^*,\,|\mu|=|\nu|\}$. 
For $k\in\Nb$, we denote by $\F_{C^*(E)}^k$ the linear span of $\{S_\mu S_\nu^* \mid \mu,\nu\in E^*,\,|\mu|=|\nu|=k\}$. 
Then $\F_{C^*(E)}$ coincides with the norm closure of $\bigcup_{k=0}^\infty \F_{C^*(E)}^k$. 

For any unital commutative ring $R$, we can also associate an $R$-algebra to the graph $E$.

\begin{definition}[{\cite[Definitions 2.4 and 3.4]{T}}] \label{def:lpa}
Let $R$ be a commutative ring with unit and let $E$ be a graph. 
The Leavitt path algebra $L_R(E)$ of $E$ over $R$ is the universal $R$-algebra generated by pairwise orthogonal idempotents $\{ P_v \mid v \in E^0\}$ and elements $\{S_e, S_e^* \mid e \in E^1\}$ satisfying 
\begin{description}
 \item{(LPA1)} $S_e^* S_f = 0$, if $e \neq f$,
 \item{(LPA2)} $S_e^* S_e = r(e)$, 
 \item{(LPA3)} $P_{s(e)} S_e = S_e = S_e P_{r(e)}$, 
 \item{(LPA4)} $S_e^* P_{s(e)} = S_e^* = P_{r(e)} S_e^*$, and, 
 \item{(LPA5)} $P_v = \sum_{e \in s^{-1}(v)} S_e S_e^*$, if $s^{-1}(v)$ is finite and nonempty. \label{item:summation_relation}
\end{description}
\end{definition}

We will view $L_R(E)$ as a $*$-algebra, where the involution extends the map $S_e \mapsto S_e^*$.
If $R$ is a subring of $\Cb$ that is closed under conjugation, we will take the involution to be conjugate linear, in all other cases we take it to be linear. 

It is customary in the Leavitt path algebra literature to simply denote the element $S_e$ by $e$. 
This blurs the distinction between edges in $E$ and elements in $L_R(E)$, but has the advantage that important details are not hiding in subscripts.  
Since we will need to differentiate between the paths in the graph, the elements of the Leavitt path algebra, and the elements of the graph $C^*$-algebra we use the $S_e$ notation. 
To help distinguish between the Leavitt path algebra and the graph $C^*$-algebras we will use upper case letters for Leavitt path algebra related objects and lower case letters for the graph $C^*$-algebra related objects.
But only when this does not conflict with well established notation. 
For instance we still use $\D_{C^*(E)}$ for the diagonal of $C^*(E)$.

We follow the same labeling conventions for Leavitt path algebras as we did for graph $C^*$-algebras. 
So for a path $\mu=\mu_1\ldots\mu_k$, we write $S_\mu$ for the product $S_{\mu_1} \cdots S_{\mu_k}$.
Each $S_\mu$ is non-zero and has the domain projection $P_{r(\mu)}$. 
For convenience, we agree to write $S_v = P_v$ for a $v \in E^0$, and $S_\emptyset=1$.
Further, we recall that $L_R(E) = \spn_R\{s_\mu s_\nu^* \mid \mu,\nu \in E^*\}$. 
Note that $S_\mu S_\nu^* \neq 0$ if and only if $r(\mu)=r(\nu)$. 
All these facts are in \cite{T}.

For each $\mu \in E^*$ we let $P_\mu = S_\mu S_\mu^*$, which we call the range projection of $\mu$ following $C^*$-algebra conventions. 
It is indeed a projection, i.e. $P_\mu  = P_\mu^* = P_\mu^2$.
The range projections of all $S_\mu$ mutually commute and generate the {\em diagonal subalgebra} $\D_{L_R(E)}$.
We set 
\[
  \D_{L_R(E)}^k = \spn\{P_\mu \mid |\mu|= k\}, \quad \text{ for } k \in \Nb.
\]
Then $\D_{L_R(E)} = \bigcup_{k=0}^\infty\D_{L_R(E)}^k$. 
If $E$ does not contain sinks and all cycles have exits (and this is what we always assume in the present paper) then $\D_{L_R(E)}$ is a maximal abelian 
subalgebra (MASA) in $L_R(E)$ whenever $R$ is an integral domain \cite[Lemma 2.1]{ABHS}.

\begin{remark}\rm 
In the above, we have taken the \emph{generators and relations} point of view on graph $C^*$-algebras and Leavitt path algebras. 
Originally graph $C^*$-algebras were defined in terms of groupoids, \cite{KPRR}, and recently a groupoid picture for Leavitt path algebras 
has also emerged, \cite{CFST, Steinberg}.
It has often proved the case that some questions are answered more easily with generators and relations and some with groupoids. 
We shall stay clear of graph groupoids in the present paper, but mention them for completeness.
\end{remark}

We recall that all Leavitt path algebras are $\Zb$-graded, with the vertex projections being homogeneous of degree $0$ and elements of the from $S_\mu S_\nu^*$ being homogeneous of degree $|\mu| - |\nu|$.
We denote the set of $0$-graded elements by $\F_{L_R(E)}$.
Then $\F_{L_R(E)}$ is the span of $\{S_\mu S_\nu^* \mid \mu,\nu\in E^*,\,|\mu|=|\nu|\}$. 
For $k\in\Nb$, we denote by $\F_{L_R(E)}^k$ the linear span of $\{S_\mu S_\nu^* \mid \mu,\nu\in E^*,\,|\mu|=|\nu|=k\}$. 
Then $\F_E$ coincides with $\bigcup_{k=0}^\infty\F_{L_R(E)}^k$. 

The behavior of a Leavitt path algebra does to some degree depend on the ring of coefficients. 
This dependence is more subtle than it may appear at first, with many foundational results only assuming that the coefficients form a field. 
While we will not limit ourselves to fields, we will instead insist on integral domains with characteristic $0$, and our main interests are $\Zb$ and $\Cb$.
We note that as a consequence of the graded uinqueness theorem $L_\Zb(E)$ embeds naturally into $C^*(E)$ (see \cite[Lemma 3.4]{JS} and also \cite[Theorem 7.3]{T2}) and $L_R(E)$ whenever $R$ has characteristic $0$ (see \cite[Proposition 3.4 and Theorem 5.3]{T}).
The embedding simply maps generators to generators and we will often suppress it.
We have two main uses of these embeddings: 
We can work in $L_\Zb(E)$ when asking questions about $C^*(E)$ that are just about sums and products of generators. 
This gives us the advantage of more concrete elements. 
Alternatively, we can move problems from $L_\Zb(E)$ into $C^*(E)$ where there is more stucture to work with, such as positivity. 
To use these embeddings we impose the following standing assumption. 

\begin{assumption*}\rm 
All rings we consider will be assumed to be integral domains of characteristic $0$.
\end{assumption*}

In fact, in almost all cases the ring will be $\Zb$.

\begin{definition}
Let $E$ be a graph. 
A finite collection of paths $\{\mu_i\}_{i=1}^n \subseteq E^*$ is called a {\em partition} of a vertex $v$ if 
\[
    \sum_{i=1}^n S_{\mu_i} S_{\mu_i}^* = P_v, \text{ in } L_\Zb(E). 
\]
For a path $\nu\in E^*$, a finite collection of paths $\{\nu \mu_i \}_{i=1}^n \subseteq E^*$ is called a {\em partition} of $\nu$ if $\{\mu_i\}$ is a partition of $r(\nu)$.
In both cases, $\max\{|\mu_i|\}$ is said to be the {\em length} of the partition. 
\end{definition}

We note that a collection of paths $\{\mu_i\}_{i=1}^n \subseteq E^*$ form a partition if and only if 
\[
    \sum_{i=1}^n s_{\mu_i} s_{\mu_i}^* = p_v, \text{ in } C^*(E), 
\]
which happens if and only if 
\[
    \sum_{i=1}^n S_{\mu_i} S_{\mu_i}^* = P_v, \text{ in } L_R(E), 
\]
for some unital commutative ring $R$ of characteristic $0$.

\begin{remark}\rm
We defined the notion of a partition by using the Leavitt path algebra, but really it is entirely a graph concept. 
For each $\alpha \in E^*$ we define the \emph{cylinder set} of $\alpha$ as
\[
	\Z(\alpha) = \setof{\gamma \in E^\infty}{\alpha \prec \gamma},
\]
with the convention that if $\alpha$ is a path of length $0$, i.e. a vertex, then $\Z(\alpha)$ is the set of all path that start at $\alpha$.
One can show that a collection of paths $\{\mu_i\}_{i=1}^n \subseteq E^*$ is a partition of a vertex $v$ if and only if the sets $\Z(\mu_i)$ are pairwise disjoint and their union is $\Z(v)$.
\end{remark}


\subsection{Endomorphisms determined by unitaries}

Cuntz's classical approach to endomorphisms of $\OO_n$, \cite{Cun2}, has recently been extended to graph $C^*$-algebras in \cite{AJSz} and \cite{CHSjfa}. 
Many of these concepts can be transported almost verbatim to the setting of Leavitt path algebras, we do so below. 

For any unital $*$-algebra $A$ we let $\U(A)$ denote the collection of unitaries (elements $u$ for which $uu^* = 1 = u^*u$). 
We denote by $\U_{E}$ the collection of all those unitaries in $L_\Zb(E)$ which commute with all vertex projections $P_v$, $v \in E^0$. 
That is, 

\begin{equation}\label{ue}
\U_{E} = \U((\D_{L_\Zb(E)}^0)' \cap L_\Zb(E)). 
\end{equation}

If $u \in \U_E$ then $uS_e$, $e \in E^1$, their adjoints $S_e^* u^*$, and the projections $P_v$, $v\in E^0$, satisfy conditions (LPA1)-(LAP5).
Thus, by the universality of $L_\Zb(E)$, there exists a unital $*$-homomorphism $\Lambda_u \colon L_\Zb(E) \to L_\Zb(E)$ such that\footnote{The reader should be aware that in some papers (e.g. in \cite{Cun2}) a different convention is used, namely $\lambda_u(S_e)=u^*S_e$}

\begin{equation}\label{lambdau}
 \Lambda_u(S_e) = uS_e \;\; \text{and} \;\; \Lambda_u(P_v)=P_v, \;\; \text{for} \; e\in E^1, v\in E^0. 
\end{equation}

The mapping $u \mapsto \Lambda_u$ establishes a bijective correspondence between $\U_E$ and the semigroup of those unital $*$-endomorphisms of $L_\Zb(E)$ which fix all $P_v$, $v\in E^0$. 
Indeed, if $\Psi$ is a $*$-endomorphism that fixes all the $P_v$ and we put 
\[
 u = \sum_{e \in E^1} \Psi(S_e) S_e^*,
\]
then $\Psi = \Lambda_u$. 

Since all cycles in the graphs we consider have exits, the Cuntz-Krieger uniqueness theorem (\cite[Theorem 6.5]{T}) implies that all endomorphisms $\Lambda_u$ are automatically injective. 
We say that $\Lambda_u$ is {\em invertible} if $\Lambda_u$ is an automorphism of $L_\Zb(E)$.
That is, when it is also surjective. 

There is a natural extension of $\Lambda_u$ to an endomorphism of either $C^*(E)$, which we shall call $\lambda_u$, or $L_R(E)$, given simply by reading equation (\ref{lambdau}) as taking place in either $C^*(E)$ or $L_R(E)$.
I.e. by viewing $L_\Zb(E)$ as sitting inside $C^*(E)$ or $L_R(E)$.
In that viewpoint the following lemma simply records the fact that $L_\Zb(E)$ contains the generators of $C^*(E)$ and $L_R(E)$.

\begin{lemma}
Suppose we are given $u \in \U_E$. 
If $\Lambda_u$ is onto $L_\Zb(E)$, then $\lambda_u$ is onto $C^*(E)$ and when we view $\Lambda_u$ an an endomorphism of $L_R(E)$ it is also onto.  
\end{lemma}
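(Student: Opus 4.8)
The plan is to reduce both assertions to the observation that each target algebra is generated --- as a $C^*$-algebra, respectively as an $R$-algebra --- by elements that already lie in the image of the endomorphism, so that surjectivity follows as soon as one knows the image is a subalgebra. The only genuine analytic ingredient appears in the $C^*$-case.

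First I would produce preimages of the generators. Since $\Lambda_u$ is assumed onto $L_\Zb(E)$, for each $e \in E^1$ there is some $x_e \in L_\Zb(E)$ with $\Lambda_u(x_e) = S_e$; because $\Lambda_u$ is a $*$-homomorphism this yields $\Lambda_u(x_e^*) = S_e^*$ for free, and $\Lambda_u(P_v) = P_v$ holds by definition. Viewing $\Lambda_u$ as an endomorphism of $L_R(E)$, the embedding $L_\Zb(E) \hookrightarrow L_R(E)$ intertwines the two realisations of $\Lambda_u$, so these same elements $x_e$, $x_e^*$, $P_v$ remain preimages of $S_e$, $S_e^*$, $P_v$ inside $L_R(E)$. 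The image $\Lambda_u(L_R(E))$ is an $R$-subalgebra containing every generator $S_e$, $S_e^*$, $P_v$; since $L_R(E) = \spn_R\{S_\mu S_\nu^* \mid \mu,\nu \in E^*\}$ and each $S_\mu S_\nu^*$ is a product of these generators, the image is all of $L_R(E)$. No limits are taken, so this part is purely formal.

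For $C^*(E)$ the same preimages, transported along the embedding $L_\Zb(E) \hookrightarrow C^*(E)$ --- which likewise intertwines $\Lambda_u$ with $\lambda_u$ --- place the generators $s_e$ and $p_v$ in the range of $\lambda_u$. Since $\lambda_u$ is a $*$-homomorphism between $C^*$-algebras, its range is closed and hence a $C^*$-subalgebra; containing $s_e$ and $p_v$, it must contain the entire $C^*$-algebra these generate, namely $C^*(E)$, so $\lambda_u$ is onto. This closedness is the main, and essentially the only, obstacle: having the generators in the range shows merely that the range contains the dense $*$-subalgebra they generate algebraically, and upgrading this to genuine surjectivity relies on the standard fact that $*$-homomorphisms of $C^*$-algebras have closed range --- an input that the Leavitt path algebra statement does not require.
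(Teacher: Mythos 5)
Your proposal is correct and follows exactly the route the paper intends: the paper gives no explicit proof, remarking only that the lemma ``simply records the fact that $L_\Zb(E)$ contains the generators of $C^*(E)$ and $L_R(E)$,'' which is precisely your argument of producing preimages of the generators and noting that the image is a ($C^*$-, resp.\ $R$-) subalgebra. Your explicit mention that closedness of the range of a $*$-homomorphism is the one analytic ingredient in the $C^*$-case is a useful clarification, but the substance matches the paper's one-line justification.
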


We cannot prove that this implication can be reversed in general, i.e. it may well be possible that $\lambda_u$ is onto but $\Lambda_u$ is not. 
However, many previous proofs have established that $\lambda_u$ is onto in effect by showing that $\Lambda_u$ is onto. 
If we restrict our attention to the diagonal subalgebra the implication can be reversed.  

\begin{lemma} \label{masaisoiff}
Suppose we are given $u \in \U_E$. 
The restriction $\Lambda_u \vert_{\D_{L_\Zb(E)}}$ is onto $\D_{L_\Zb(E)}$ if and only if $\lambda_u \vert_{\D_{C^*(E)}}$ is onto $\D_{C^*(E)}$.   
\end{lemma}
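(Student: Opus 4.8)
The plan is to pass to the Gelfand picture of the two diagonals and reduce the statement to an elementary fact about continuous integer-valued functions; I read ``onto $X$'' as ``image equals $X$''. Under the standing assumptions the diagonal MASA is $\D_{C^*(E)}\cong C(E^\infty)$, with each range projection $p_\mu$ corresponding to the indicator $\mathbbm{1}_{\Z(\mu)}$ of its cylinder set. Since $\D_{L_\Zb(E)}$ is the $\Zb$-subalgebra generated by these indicators, and every clopen subset of $E^\infty$ is a finite disjoint union of cylinders, this identifies $\D_{L_\Zb(E)}\cong C(E^\infty,\Zb)$, the ring of locally constant $\Zb$-valued functions. Throughout I will use that $\lambda_u$ extends $\Lambda_u$ and is injective: by the Cuntz--Krieger uniqueness theorem for graph $C^*$-algebras, applied to the Cuntz--Krieger family $\{us_e,p_v\}$, all of whose vertex projections are nonzero. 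The essential subtlety to keep in mind is that $C(E^\infty,\Zb)$ is closed but \emph{not} dense in $C(E^\infty)$, so one cannot deduce surjectivity on $\D_{C^*(E)}$ merely by taking norm closures of a surjection on $\D_{L_\Zb(E)}$; the complex span $\spn_\Cb\{p_\mu\}$, by contrast, is dense.

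For the forward implication, assume $\Lambda_u(\D_{L_\Zb(E)})=\D_{L_\Zb(E)}$. First I would check that $\lambda_u$ restricts to an endomorphism $\theta$ of $\D_{C^*(E)}$: each $\lambda_u(p_\mu)=\Lambda_u(P_\mu)$ lies in $\D_{L_\Zb(E)}\subseteq\D_{C^*(E)}$, so by linearity $\lambda_u$ sends the dense subalgebra $\spn_\Cb\{p_\mu\}$ into $\D_{C^*(E)}$, and continuity extends this to all of $\D_{C^*(E)}$. By Gelfand duality $\theta$ is given by precomposition with a continuous map $\phi\colon E^\infty\to E^\infty$, and injectivity of $\lambda_u$ forces $\phi$ to be surjective. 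The hypothesis says precisely that $\theta$ maps $C(E^\infty,\Zb)$ onto itself. Since the cylinder indicators $\mathbbm{1}_{\Z(\mu)}$ separate the points of $E^\infty$, I can then conclude that $\phi$ is injective: if $\phi(x)=\phi(y)$ then every function in the image of $\theta$, hence every element of $C(E^\infty,\Zb)$, would agree at $x$ and $y$, forcing $x=y$. Thus $\phi$ is a continuous bijection of the compact Hausdorff space $E^\infty$, hence a homeomorphism, and therefore $\theta$ is onto $\D_{C^*(E)}$.

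For the reverse implication, assume $\lambda_u(\D_{C^*(E)})=\D_{C^*(E)}$. Then $\theta=\lambda_u|_{\D_{C^*(E)}}$ is a surjective $*$-endomorphism which is also injective (as $\lambda_u$ is), hence an automorphism of $C(E^\infty)$, dual to a homeomorphism $\phi$. Precomposition with $\phi$ and with $\phi^{-1}$ carries $C(E^\infty,\Zb)$ into itself, so $\theta$ maps $\D_{L_\Zb(E)}$ onto $\D_{L_\Zb(E)}$; as $\theta$ agrees with $\Lambda_u$ on $\D_{L_\Zb(E)}$, this gives $\Lambda_u(\D_{L_\Zb(E)})=\D_{L_\Zb(E)}$. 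It is worth noting that this direction also delivers the a priori nonobvious fact that $\Lambda_u$ preserves the algebraic diagonal at all.

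The main obstacle is the forward direction, and it is exactly the failure of density of $\D_{L_\Zb(E)}$ in $\D_{C^*(E)}$: a surjection of the sparse ring $C(E^\infty,\Zb)$ onto itself carries far less metric information than a surjection of $C(E^\infty)$, so no closure argument is available. The bridge between them is the separation argument above, which upgrades surjectivity on integer-valued functions to injectivity of the underlying point map $\phi$. By contrast, once the Gelfand dictionary is in place the reverse direction is essentially automatic, since homeomorphism duals preserve integer-valued functions in both directions.
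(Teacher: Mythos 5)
Your proof is correct, but it takes a genuinely different route from the paper's. The paper disposes of the forward direction in one line: the image of a $*$-homomorphism between $C^*$-algebras is a closed subalgebra, and since under the hypothesis it contains $\D_{L_\Zb(E)}$ (hence the dense complex span of the projections $p_\mu$) it must be all of $\D_{C^*(E)}$ --- so the non-density of the integer-valued functions, which you rightly flag, is sidestepped by taking complex spans inside the closed image rather than norm closures of the integral image. For the reverse direction the paper argues algebraically: every projection of $\D_{C^*(E)}$ already lies in $\D_{L_\Zb(E)}$, injectivity of $\lambda_u$ forces any $\lambda_u$-preimage of a projection to be a projection, hence $\Lambda_u(\D_{L_\Zb(E)})$ contains all projections of $\D_{L_\Zb(E)}$, and these generate it. You instead run everything through Gelfand duality: surjectivity on $C(E^\infty,\Zb)$ together with the fact that cylinder indicators separate points upgrades the dual map $\phi$ from a surjection to a homeomorphism, and homeomorphisms carry integer-valued continuous functions onto integer-valued continuous functions in both directions. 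Your argument is longer but buys something the paper's proof does not state at this point: either surjectivity hypothesis already forces the induced map on $E^\infty$ to be a homeomorphism, which is precisely the picture exploited later in Section~\ref{transducers}. Both proofs ultimately rest on the same two inputs --- injectivity of $\lambda_u$ via Cuntz--Krieger uniqueness, and the identification of the projections of $\D_{C^*(E)}$ with indicator functions of finite disjoint unions of cylinder sets.
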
 
\begin{proof}
As above the forward implication follows from the fact the $\D_{L_\Zb(E)}$ generates $\D_{C^*(E)}$.
For the other implication, we note that all projections in $\D_{C^*(E)}$ live in $\D_{L_\Zb(E)}$ and that $\lambda_u$ maps projections to projections, so $\Lambda_u(\D_{L_\Zb(E)})$ contains all projections in $\D_{L_\Zb(E)}$. 
The result now follows from the fact that $\D_{L_\Zb(E)}$ is generated by its projections.
\end{proof}

We also consider an algebraic version of the {\em shift map}, introduced in \cite{CK}, given by 
\begin{equation}\label{shift}
  \Phi(x) = \sum_{e \in E^1} S_e x S_e^*, \;\;\; x \in L_\Zb(E). 
\end{equation}
The shift is a unital map. 
The graph $C^*$-algebra version is completely positive but we do not know of a natural algebraic version of that property. 
We note that the shift is an injective $*$-homomorphism when restricted to the relative commutant $(\D_{L_\Zb(E)}^0)'\cap L_\Zb(E)$ and that it globally preserves $\U_E$, $\F_{L_\Zb(E)}$ and $\D_{L_\Zb(E)}$. 

For $k\geq 1$ we denote

\begin{equation}\label{uk}
  u_k = u\Phi(u)\cdots\Phi^{k-1}(u). 
\end{equation}

For each $u \in \U_E$ and all $e \in E^1$ we have $S_e u=\Phi(u) S_e$, and thus 
\begin{equation} \label{lambdauS}
  \Lambda_u(S_\mu S_\nu^*)= u_{|\mu|} S_\mu S_\nu^* u_{|\nu|}^*
\end{equation}
for any two paths $\mu,\nu\in E^*$. 

In the present paper, we will be particularly concerned with a special class $\SS_E$ of unitaries in $\U_E$, see \cite{CHSjfa}, which we now define. 
Consider a finite subset  $\J \subseteq E^*\times E^*$ such that $s(\mu) = s(\nu)$ and $r(\mu) = r(\nu)$ for all $(\mu,\nu)\in\J$. 
We put
\begin{equation}\label{uj}
  u_\J = \sum_{(\mu,\nu)\in\J} S_\mu S_\nu^*. 
\end{equation} 
Note that the same element may admit many different presentations in the form $u_\J=u_{\J'}$ with $\J\neq\J'$. 
Choosing a convenient presentation will play an important role in what follows. 
Let $\J_1=\{\mu\in E^* \mid \exists\nu\in E^* \;\text{such that}\;(\mu,\nu)\in\J\}$ and similarly let $\J_2=\{\nu\in E^* \mid \exists\mu\in E^* \;\text{such that}\;(\mu,\nu)\in\J\}$. 
Note that $u_\J$ is unitary if and only if 

\begin{equation}\label{twopartitions}
  \sum_{\mu\in\J_1}P_\mu = 1 = \sum_{\nu\in\J_2}P_\nu. 
\end{equation}

Then $\SS_E$ is defined as the collection of all such unitary elements $u_\J$.
Assume $u_\J\in\SS_E$.
Then the endomorphisms $\Lambda_{u_\J}$ and $\lambda_{u_\J}$ will be denoted by $\Lambda_\J$ and $\lambda_\J$, respectively. 
Also, if $\mu \in \J_1$ then there is a unique $\nu \in J_2$ such that $(\mu,\nu)\in\J$, we write $J_\mu = (\mu,\nu)$. 
We will often write this element $(\mu,\nu)$ in the form $(\mu,e\kappa)$ with $e\in E^1$ and $\kappa\in E^*\cup\{\emptyset\}$ such that $\nu=e\kappa$.  

The following simple lemma follows immediately from the definition of a unitary $u_\J \in \SS_E$, so we omit its proof 
but record it for future reference. 

\begin{lemma}\label{partitions}
Let $u_\J\in\SS_E$. 
If $\nu$ is a prefix of $\mu \in \J_1$, then $\J_1$ contains a partition of $\nu$. 
If $\nu$ is a prefix of $\mu \in \J_2$, then $\J_2$ contains a partition of $\nu$. 
\end{lemma}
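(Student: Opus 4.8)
The plan is to prove the first assertion, concerning $\J_1$, in full, and then deduce the second one about $\J_2$ by a short duality argument. The entire first assertion reduces to a combinatorial statement about prefixes, once the correct orthogonality information has been squeezed out of the unitarity of $u_\J$.

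First I would unpack what $u_\J \in \SS_E$ provides. By (\ref{twopartitions}) we have $\sum_{\mu \in \J_1} P_\mu = 1$. Multiplying this identity by a vertex projection $P_v$ and using $P_v P_\mu = P_\mu$ when $s(\mu) = v$ and $P_v P_\mu = 0$ otherwise, I obtain $\sum_{\mu \in \J_1,\, s(\mu)=v} P_\mu = P_v$ for every $v \in E^0$; that is, the paths of $\J_1$ emanating from $v$ form a partition of $v$. By the Remark relating partitions to cylinder sets, the sets $\{\Z(\mu) : \mu \in \J_1,\ s(\mu)=v\}$ are then pairwise disjoint with union $\Z(v)$, and, running over all $v$, the whole family $\{\Z(\mu) : \mu \in \J_1\}$ is pairwise disjoint with union $E^\infty$. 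The key consequence I extract is that no two distinct elements of $\J_1$ are prefix-comparable: if $\mu \prec \mu'$ with $\mu,\mu' \in \J_1$ distinct, then $\Z(\mu') \subseteq \Z(\mu)$, contradicting disjointness.

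Next comes the main computation. Given that $\nu \prec \mu_0$ for some $\mu_0 \in \J_1$, set $A = \{\mu \in \J_1 : \nu \prec \mu\}$; this contains $\mu_0$ and is contained in $\J_1$. I claim $\sum_{\mu \in A} P_\mu = P_\nu$, which is exactly the statement that $A$ is a partition of $\nu$. To prove it, multiply $\sum_{\mu \in \J_1} P_\mu = 1$ by $P_\nu$ and use the standard prefix calculus for diagonal projections: $P_\mu P_\nu$ equals $P_\mu$ when $\nu \prec \mu$, equals $P_\nu$ when $\mu$ is a proper prefix of $\nu$, and equals $0$ otherwise (this follows from $S_\mu^* S_\nu$ being nonzero only when one of $\mu,\nu$ is a prefix of the other). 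The crucial point is that the middle case never occurs: if some $\mu \in \J_1$ were a proper prefix of $\nu$, then since $\nu \prec \mu_0$ it would be a proper prefix of $\mu_0 \in \J_1$, contradicting the prefix-incomparability established above. Hence every nonzero term equals $P_\mu$ for a unique $\mu \in A$, and $P_\nu = \sum_{\mu \in A} P_\mu$.

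Finally, for $\J_2$ I would invoke symmetry: $u_\J^* = u_{\J'}$ with $\J' = \{(\nu,\mu) : (\mu,\nu) \in \J\}$, so $u_{\J'} \in \SS_E$, $\J'_1 = \J_2$, and the first assertion applied to $\J'$ yields the second. The step I expect to be the genuine obstacle is extracting pairwise prefix-incomparability from the single relation $\sum_{\mu \in \J_1} P_\mu = 1$: projections summing to the identity need not be orthogonal in a general algebra, so I would lean on the cylinder-set characterization in the Remark — equivalently, on positivity in $C^*(E)$ transported through the embedding $L_\Zb(E) \hookrightarrow C^*(E)$ — to justify the disjointness. Everything past that point is routine prefix bookkeeping.
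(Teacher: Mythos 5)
Your proof is correct. Note that the paper itself gives no argument for this lemma --- it is stated as following ``immediately from the definition'' with the proof omitted --- so there is nothing to compare against step by step; your writeup is a complete and accurate version of the intended ``immediate'' argument. The one point that genuinely needs care, namely that $\sum_{\mu\in\J_1}P_\mu=1$ forces the $P_\mu$ to be mutually orthogonal (equivalently, that distinct elements of $\J_1$ are prefix-incomparable), you identify and justify correctly via the cylinder-set picture or, equivalently, positivity in $C^*(E)$ through the embedding $L_\Zb(E)\hookrightarrow C^*(E)$; the remaining prefix bookkeeping and the duality $u_\J^*=u_{\J'}$ with $\J'_1=\J_2$ are exactly right.
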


Since each unitary $u_\J \in \SS_E$ normalizes the diagonal MASA $\D_{C^*(E)}$ of $C^*(E)$, it follows that $\lambda_\J(\D_{C^*(E)}) \subseteq \D_{C^*(E)}$, \cite{CHSjfa}.
Hence we must also have that $\Lambda_J(\D_{L_\Zb(E)}) \subseteq \D_{L_\Zb(E)}$.
For $\Lambda_\J$ to be an automorphism of $L_{\Zb}(E)$ it is then {\em necessary} that $\Lambda_\J$ restricts to an automorphism of $\D_{L_\Zb(E)}$ (which happens precisely when $\Lambda_\J(\D_{L_\Zb(E)})=\D_{L_\Zb(E)}$. 
However, it is entirely possible that $\lambda_\J(\D_{C^*(E)})=\D_{C^*(E)}$ but $\lambda_\J\not\in\aut(C^*(E))$.  
Lots of concrete examples of this kind were given for the case of $\OO_2$ in \cite[Section 5]{CStams}, and for other Cuntz algebras in \cite{cksz}. Example \ref{exam3}, given at the end of this paper, is also of this kind. 


\section{Coding graphs}

\begin{definition}\label{codinggraph}
Let $u_\J\in\SS_E$. We define a labeled graph $(E_\J,\LL_J)$, called the {\em coding} graph of $u_\J$, with the 
vertex set and the edge set as follows:
$$ \begin{aligned}
E_\J^0 & = \J, \\
E_\J^1 & = \{ [(\mu_1,e_1\nu_1),(\mu_2,e_2\nu_2)] \mid S_{\nu_1}^*S_{\mu_2}\neq 0,\, 
(\mu_1,e_1\nu_1),(\mu_2,e_2\nu_2)\in\J\}. 
\end{aligned} $$
The source map, the range map, and the labeling $\LL_\J \colon E_\J^1\to L_\Zb(E)$ are defined as:
$$ \begin{aligned}
s_\J([(\mu_1,e_1\nu_1),(\mu_2,e_2\nu_2)]) & = (\mu_1,e_1\nu_1), \\
r_\J([(\mu_1,e_1\nu_1),(\mu_2,e_2\nu_2)]) & = (\mu_2,e_2\nu_2), \\
\LL_\J([(\mu_1,e_1\nu_1),(\mu_2,e_2\nu_2)]) & = S_{\nu_1}^*S_{\mu_2}. 
\end{aligned} $$
\end{definition}

We note that $S_{\nu_1}^*S_{\mu_2}\neq 0$ implies $s(\nu_1)=s(\mu_2)$. Since $u_\J\in\SS_E$, we also have 
$r(e_1)=s(\nu_1)$ and $s(\mu_2)=s(e_2)$, Consequently,  $r(e_1)=s(e_2)$. We also note that in fact 
$$ \LL_\J(E_\J^1)\subseteq\{S_\alpha \mid \alpha\in E^*\} \cup\{P_v \mid v\in E^0\} \cup \{S_\alpha^* \mid \alpha\in E^*\}. $$
In illustrations, a vertex $(\mu,e\nu)\in E_\J^0$ will be represented by the following box:
\[ \beginpicture
\setcoordinatesystem units <1cm,1cm>
\setplotarea x from -3 to 3, y from -0.9 to 0.9
\setlinear 
\plot 1 0.8  3 0.8  3 -0.8  1 -0.8  1 0.8 /
\plot 1 0  3 0 /
\plot 2 -0.8  2 0 /
\plot -3 0.8  -1 0.8  -1 -0.8  -3 -0.8  -3 0.8 /
\plot -3 0  -1 0 /
\plot -2 0  -2 -0.8 /
\put {$e$} at -2 0.4
\put {$e$} at 2 0.4 
\put {$\mu$} at 1.5 -0.4 
\put {$\mu$} at -2.5 -0.4
\put {$\nu$} at -1.5 -0.4
\put {or} at 0 0
\put {$r(e)$} at 2.5 -0.4
\endpicture \]
if $|\nu|\geq1$ or $\nu=\emptyset$, respectively. 

\begin{example}{\rm 
Consider the unitary element $u= S_1 S_{22}^* + S_{21} S_{21}^* + S_{22} S_1^*$ of the Leavitt algebra $L_{2,\Zb}$, given by the graph consisting of 
one vertex denoted $\emptyset$ and two edges denoted $1$ and $2$, respectively. 
If we put $\J = \{ (1,22), (21,21), (22,1) \}$ then $u = u_\J$
The coding graph of $u_\J$ looks as follows:
\[ \beginpicture
\setcoordinatesystem units <1cm,0.9cm>
\setplotarea x from -5 to 5, y from -3.7 to 3.7
\put {$2$} at -4 3.2
\put {$2$} at 4 3.2
\put {$1$} at 0 -2.4
\put {$1$} at -4.5 2.4
\put {$21$} at 3.5 2.4
\put {$22$} at -0.5 -3.2
\put {$2$} at -3.5 2.4
\put {$1$} at 4.5 2.4
\put {$S_{21}$} at 2.8 0
\put {$S_2$} at -1.6 0
\put {$S_1$} at -3.3 0
\put {$S_1$} at 0 3.4
\put {$1$} at 0 2.1
\put {$S_{22}$} at 3.7 -2.8
\put {$\emptyset$} at 0.5 -3.2
\arrow <0.235cm> [0.2,0.6] from 1.38 -3.2   to 1.3 -3
\arrow <0.235cm> [0.2,0.6] from 3.8 1.6  to 4 1.8
\arrow <0.235cm> [0.2,0.6] from -0.78 -1.6  to -0.6 -1.8 
\arrow <0.235cm> [0.2,0.6] from -4.22 1.6  to -4.4 1.8
\arrow <0.235cm> [0.2,0.6] from 2.6 3.1  to 2.8 3.1
\arrow <0.235cm> [0.2,0.6] from -2.6 2.5 to -2.8 2.5
\circulararc  180 degrees from 2.5 -3.6  center at 2.5 -2.8
\circulararc 80 degrees from 2 -2 center at 2 -2.8
\circulararc 80 degrees from 1.3 -3 center at 2 -2.8
\setlinear
\plot -1 -2  1 -2  1 -3.6  -1 -3.6  -1 -2 /
\plot -5 3.6  -3 3.6  -3 2  -5 2  -5 3.6 /
\plot 5 3.6  5 2  3 2  3 3.6  5 3.6 /
\plot -5 2.8  -3 2.8 /
\plot 3 2.8  5 2.8 /
\plot -1 -2.8  1 -2.8 /
\plot -4 2.8  -4 2 /
\plot 4 2.8  4 2 /
\plot 0 -3.6  0 -2.8 /
\plot 0.5 -1.8  4 1.8 /
\plot -0.6 -1.8  -3.6 1.8 /
\plot -1.4 -1.8  -4.4 1.8 /
\plot -2.8 2.5  2.8 2.5 /
\plot -2.8 3.1  2.8 3.1 /
\plot 2 -3.6  2.5 -3.6 /
\plot 2 -2  2.5 -2 /
\endpicture \]
}\end{example}

We will use the shorthand notation 
\begin{equation}\label{pathinej}
 (\mu_1,e_1\nu_1) \Rightarrow (\mu_k,e_k\nu_k) 
\end{equation} 
to denote a path $\omega$ in graph $E_\J$ with consecutive edges $\omega_j=[(\mu_j,e_j\nu_j),
(\mu_{j+1},e_{j+1}\nu_{j+1})]$, for $j=1,2,\ldots,k-1$. The labelling $\LL_\J$ extends to a map 
$\LL_\J \colon E_\J^*\to L_\Zb(E)$ so that if $\omega\in E_\J^*$ is as in (\ref{pathinej}) then 
$$ \LL_\J(\omega) = \LL_\J(\omega_1)\cdots\LL_\J(\omega_{k-1}). $$
If the path $\omega$ consists of a single vertex $(\mu,e\nu)$, then we put $\LL_\J(\omega)=P_{r(\mu)}$. Furthermore, 
we define two maps $\LL_s$ and $\LL_r$, both from $E_\J^*$ to $\{S_\alpha \mid \alpha\in E^*\}\cup
\{P_v \mid v\in E^0\}$, so that 
$$ \begin{aligned}
\LL_s(\omega) & = S_{\mu_1}, \\
\LL_r(\omega) & = S_{\nu_k}. 
\end{aligned} $$
We also define a map $\E \colon E_\J^* \to E^*$ by 
$$ \E(\omega) = e_1 e_2 \ldots e_k. $$
That is, $\E(\omega)$ is a list of $e$ from all the vertices $\omega$ visits. 
Note that this mean that if $\omega$ has length $k$, then $\E(\omega)$ has length $k+1$.
Thus if we have two paths $\omega$ and $\xi$ with $r(\omega) = s(\xi)$, then it is not the case that $\E(\omega\xi) = \E(\omega)\E(\xi)$ since on the right hand side the $e$ of the vertex $r(\omega)$ appears both at the end of $\E(\omega)$ and at the start of $\E(\xi)$.

The following lemma illustrates the usefulness of coding graphs for dealing with endomorphisms and essentially 
motivates the very definition of a coding graph. 

\begin{lemma}\label{imageofpath}
Let $u_\J \in \SS_E$ and let $\alpha\in E^*$. 
Then 
\[
 \Lambda_J(S_\alpha)  = \sum_{\E(\omega)=\alpha}\LL_s(\omega)\LL_\J(\omega)\LL_r(\omega)^* 
\]
\end{lemma}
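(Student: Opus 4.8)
The plan is to prove the identity by induction on the length of $\alpha$, using that $\Lambda_\J$ is a $*$-homomorphism together with the defining relation $\Lambda_\J(S_e) = u_\J S_e$ from (\ref{lambdau}). The whole point is that the ``cross terms'' produced when one multiplies $u_\J$ against the previous image are exactly the edge labels of $E_\J$, so that extending $\alpha$ by one edge corresponds to extending a path in the coding graph by one edge.

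First I would record the length-one formula. For a single edge $e$ one has
\[
\Lambda_\J(S_e) = u_\J S_e = \sum_{(\mu,\nu)\in\J} S_\mu S_\nu^* S_e = \sum_{(\mu,e\kappa)\in\J} S_\mu S_\kappa^*,
\]
since $S_\nu^* S_e$ vanishes unless the first edge of $\nu$ is $e$, in which case $S_\nu^* S_e = S_\kappa^*$ for $\nu = e\kappa$ (the convention $S_\emptyset = 1$ covering $\nu = e$). On the coding-graph side, the paths $\omega$ with $\E(\omega) = e$ are exactly the single vertices $(\mu, e\kappa) \in E_\J^0$ whose distinguished edge is $e$; for such a vertex $\LL_s(\omega) = S_\mu$, $\LL_r(\omega) = S_\kappa$, and $\LL_\J(\omega) = P_{r(\mu)}$ by the single-vertex convention. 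Hence $\LL_s(\omega)\LL_\J(\omega)\LL_r(\omega)^* = S_\mu P_{r(\mu)} S_\kappa^* = S_\mu S_\kappa^*$, and summing over these vertices reproduces the displayed sum, establishing the base case.

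For the inductive step, suppose the formula holds for $\alpha$ and let $e$ satisfy $r(\alpha) = s(e)$, so that $\Lambda_\J(S_{\alpha e}) = \Lambda_\J(S_\alpha)\, u_\J S_e$. I would substitute the inductive expression for $\Lambda_\J(S_\alpha)$ and the length-one expansion of $u_\J S_e$, obtaining a double sum whose typical term is
\[
S_{\mu_1}\, \LL_\J(\omega)\, S_{\nu_k}^*\, S_{\mu'}\, S_{\kappa'}^*,
\]
indexed by a path $\omega = (\mu_1, e_1\nu_1) \Rightarrow (\mu_k, e_k\nu_k)$ with $\E(\omega) = \alpha$ and a vertex $(\mu', e\kappa') \in E_\J^0$. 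The crucial observation is that the middle factor $S_{\nu_k}^* S_{\mu'}$ is nonzero precisely when $[(\mu_k, e_k\nu_k),(\mu',e\kappa')]$ is an edge of $E_\J$, and that in that case it equals the label $\LL_\J$ of this edge. Thus the nonzero terms are indexed by the one-edge extensions $\omega' = \omega \cdot [(\mu_k,e_k\nu_k),(\mu',e\kappa')]$, and each such term rewrites as $\LL_s(\omega')\LL_\J(\omega')\LL_r(\omega')^*$: the first vertex is unchanged so $\LL_s(\omega') = S_{\mu_1}$, the new last vertex gives $\LL_r(\omega') = S_{\kappa'}$, and $\LL_\J(\omega') = \LL_\J(\omega)\cdot(S_{\nu_k}^* S_{\mu'})$.

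The last point to nail down is that $\omega \mapsto \omega'$ is a bijection between this indexing set and the set of paths $\omega'$ with $\E(\omega') = \alpha e$: appending the vertex $(\mu', e\kappa')$ appends its distinguished edge $e$ to $\E$, so $\E(\omega') = \alpha e$, and conversely every such $\omega'$ arises by deleting its last edge. Summing then yields exactly $\sum_{\E(\omega') = \alpha e} \LL_s(\omega')\LL_\J(\omega')\LL_r(\omega')^*$, completing the induction. I expect the only real work to be this combinatorial bookkeeping — checking that each of $\LL_s$, $\LL_r$, $\LL_\J$ and $\E$ transforms correctly under the extension, and that the edge-existence condition $S_{\nu_k}^* S_{\mu'} \neq 0$ matches adjacency in $E_\J$ with no terms lost or double-counted. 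The remaining manipulations (the identities $S_\nu^* S_e = S_\kappa^*$ and $S_\mu P_{r(\mu)} = S_\mu$, and that consecutive $e_j$ along a path compose so $\E(\omega)$ is a genuine path in $E$) are routine and rely only on the relations of $L_\Zb(E)$ and the observations following Definition \ref{codinggraph}.
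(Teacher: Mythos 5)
Your proof is correct and is essentially the paper's argument: the paper expands $\Lambda_\J(S_\alpha)=\prod_j u_\J S_{e_j}$ as a $k$-fold sum over vertices of $E_\J$ and regroups the product as $S_{\mu_1}(S_{\nu_1}^*S_{\mu_2})\cdots(S_{\nu_{k-1}}^*S_{\mu_k})S_{\nu_k}^*$, identifying the nonzero terms with paths $\omega$ satisfying $\E(\omega)=\alpha$ exactly as you do. Your induction on $|\alpha|$ just performs that same regrouping one factor at a time, with the same base case and the same key observation that the cross terms $S_{\nu_k}^*S_{\mu'}$ are nonzero precisely when they are edge labels of $E_\J$.
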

\begin{proof}
If $|\alpha|=1$ and thus $\alpha$ consits of a single edge $e$, 
then $\E(\omega)=e$ if and only if $\omega\in E_\J^0$ and $\omega=(\mu,e\nu)$. Hence 
\[
 \Lambda_\J(S_e) = u_\J S_e = \sum_{(\mu,e\nu)\in E_\J^0} S_\mu P_{r(\mu)} S_\nu^* = 
\sum_{\E(\omega)=e}\LL_s(\omega)\LL_\J(\omega)\LL_r(\omega)^*. 
\]
If $|\alpha|=k\geq2$ and $\alpha=e_1e_2\ldots e_k$, then we have 
\begin{align*}
\Lambda_\J(S_\alpha) & = \sum_{(\mu_1,e_1\nu_1)\in E_\J^0} \ldots \sum_{(\mu_k,e_k\nu_k)\in E_\J^0} 
(S_{\mu_1}S_{\nu_1}^*)\cdots(S_{\mu_k}S_{\mu_k}^*) \\
 & = \sum_{(\mu_1,e_1\nu_1)\in E_\J^0} \ldots \sum_{(\mu_k,e_k\nu_k)\in E_\J^0} 
S_{\mu_1}(S_{\nu_1}^*S_{\mu_2})\cdots(S_{\nu_{k-1}}^*S_{\mu_k})S_{\nu_k}^* \\
 & = \sum_{\E(\omega)=\alpha}\LL_s(\omega)\LL_\J(\omega)\LL_r(\omega)^*.  \qedhere
\end{align*}
\end{proof}

\begin{corollary}\label{imageofprojpath}
Let $\mu\in\J_1$. 
Given a $\delta\in E^*$, consider the (possibly empty) collection of all paths $\{A_i\}$ in $E_\J^*$ with 
$s_\J(A_i)=J_\mu$ and $\E(A_i)=\delta$. Then 
\[
 P_\mu \Lambda_\J(S_\delta) = \sum_i S_\mu\LL_J(A_i)\LL_r(A_i)^*. 
\]
\end{corollary}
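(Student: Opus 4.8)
The plan is to start from Lemma \ref{imageofpath}, which already expresses $\Lambda_\J(S_\delta)$ as $\sum_{\E(\omega)=\delta}\LL_s(\omega)\LL_\J(\omega)\LL_r(\omega)^*$, and simply multiply on the left by the projection $P_\mu = S_\mu S_\mu^*$. Writing the source vertex of each $\omega$ as $s_\J(\omega)=(\mu_1,e_1\nu_1)$, so that $\LL_s(\omega)=S_{\mu_1}$ with $\mu_1\in\J_1$, the only factor affected by the multiplication is $S_\mu^* S_{\mu_1}$. Hence the whole computation reduces to understanding the product $S_\mu^* S_{\mu'}$ for two elements $\mu,\mu'\in\J_1$.

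The key step, and the only real obstacle, is the orthogonality relation: for distinct $\mu,\mu'\in\J_1$ one has $S_\mu^* S_{\mu'}=0$, whereas $S_\mu^* S_\mu=P_{r(\mu)}$. The second identity is immediate from (LPA2) applied to a path. For the first, recall that $S_\mu^* S_{\mu'}$ can be nonzero only when one of $\mu,\mu'$ is an initial segment of the other. So I must rule out that any element of $\J_1$ is a prefix of another distinct element. This follows from unitarity: by \eqref{twopartitions} we have $\sum_{\mu\in\J_1}P_\mu=1$, so $\J_1$ is a partition, and by the cylinder-set characterization recorded in the Remark the sets $\{\Z(\mu)\mid\mu\in\J_1\}$ are pairwise disjoint. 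Disjointness of $\Z(\mu)$ and $\Z(\mu')$ for distinct $\mu,\mu'$ is exactly the statement that neither is a prefix of the other, giving $S_\mu^* S_{\mu'}=0$. (Equivalently, one may note that projections summing to $1$ are automatically mutually orthogonal, and that $P_\mu P_{\mu'}=0$ forces $S_\mu^* S_{\mu'}=0$.)

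With this in hand, left multiplication collapses the sum term by term: $P_\mu\LL_s(\omega)=S_\mu\bigl(S_\mu^* S_{\mu_1}\bigr)$ vanishes unless $\mu_1=\mu$, and when $\mu_1=\mu$ it equals $S_\mu P_{r(\mu)}=S_\mu$ by (LPA3). The surviving summands are precisely those $\omega$ with $\E(\omega)=\delta$ whose source vertex has first coordinate $\mu$; since $J_\mu$ is the unique element of $\J$ with first coordinate $\mu$, this condition is exactly $s_\J(\omega)=J_\mu$. These are, by definition, the paths $A_i$, and one reads off
\[
 P_\mu\Lambda_\J(S_\delta)=\sum_i S_\mu\LL_\J(A_i)\LL_r(A_i)^*,
\]
as claimed. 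I expect no separate case analysis to be needed for small $\delta$ or for vertices with $\nu_1=\emptyset$ (where $\LL_\J$ starts with a bare $S_{\mu_2}$ rather than an $S_{\nu_1}^*$ factor), because the single identity $S_\mu P_{r(\mu)}=S_\mu$ is what makes the collapse uniform; verifying this uniformity is the only routine bookkeeping remaining once the orthogonality step is established.
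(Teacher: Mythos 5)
Your proposal is correct and is precisely the argument the paper intends: the corollary is stated without proof immediately after Lemma \ref{imageofpath}, and the implicit justification is exactly your computation — multiply the lemma's formula by $P_\mu$ on the left and use the partition property \eqref{twopartitions} to see that $S_\mu^*\LL_s(\omega)$ vanishes unless $s_\J(\omega)=J_\mu$. Your care over the orthogonality $S_\mu^*S_{\mu'}=0$ for distinct $\mu,\mu'\in\J_1$ is the right point to isolate, and the rest collapses as you describe.
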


Let $\eta\in E_\J^1$ be an edge in the coding graph $E_\J$. We define its {\em degree} as follows:
$$ \deg(\eta) = \left\{\hspace{-2mm} \begin{array}{rl} |\alpha| & \text{if}\; \LL_\J(\eta)=S_\alpha, \\ 
0 & \text{if}\; \LL_\J(\eta)\in\D_E^0, \\ -|\alpha| & \text{if}\; \LL_\J(\eta)=S_\alpha^*. \end{array} \right. $$

We say that a path in $E_\J$ is {\em positive}, {\em non-negative}, {\em negative} or {\em non-positive}, when 
the degrees of all of its edges are positive, non-negative, negative or non-positive, respectively. 
A {\em zero} path consists entirely of edges with degree $0$.  

\begin{lemma}\label{outedges}
Let $J_\mu\in E_\J^0$. Then the following holds. 
\begin{description}
\item{(i)} If the vertex $J_\mu$ emits a non-positive edge then it emits no other edges. 
\item{(ii)} If the vertex $J_\mu$ emits a positive edge then for each edge $[J_\mu,J_\delta]$ it emits there is a path $\alpha\in E^*$ 
such that $\LL_\J([J_\mu,J_\delta])=S_\alpha$, and the collection of all these paths constitutes a partition of $r(\mu)$. 
\end{description}
\end{lemma}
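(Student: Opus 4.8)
The plan is to recast everything in terms of the prefix combinatorics of $\J_1$ and then read off both parts from the multiplication rule for $S_\beta^*S_\gamma$ together with the fact that $\J_1$ is prefix-free. Write $J_\mu=(\mu,e_1\nu_1)$, and let $[J_\mu,J_\delta]$ with $J_\delta=(\mu_2,e_2\nu_2)$ be an emitted edge. Its label is $\LL_\J([J_\mu,J_\delta])=S_{\nu_1}^*S_{\mu_2}$, which is nonzero precisely when $\nu_1$ and $\mu_2$ are comparable as paths sharing the source $s(\nu_1)=s(\mu_2)$. Using $S_\beta^*S_{\beta\gamma}=S_\gamma$ and $S_{\beta\gamma}^*S_\beta=S_\gamma^*$ I would split into three cases: if $\nu_1\prec\mu_2$, say $\mu_2=\nu_1\gamma$ with $|\gamma|\ge1$, the label is $S_\gamma$ and the edge is positive; if $\mu_2=\nu_1$ the label is $P_{r(\nu_1)}$ and the edge has degree $0$; and if $\mu_2\prec\nu_1$ the label is $S_\gamma^*$ and the edge is negative. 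In short, an out-edge is non-positive exactly when $\mu_2\preceq\nu_1$ and positive exactly when $\nu_1\prec\mu_2$.

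The one structural input I need is that $\J_1$ is prefix-free. This follows from the unitarity relation \eqref{twopartitions}: multiplying $\sum_{\mu\in\J_1}P_\mu=1$ by $P_v$ shows $\{\mu\in\J_1\mid s(\mu)=v\}$ is a partition of $v$, so by the cylinder-set characterization the sets $\{\Z(\mu)\mid\mu\in\J_1\}$ are pairwise disjoint; since no graph has sinks, a proper prefix relation $\mu'\prec\mu''$ in $\J_1$ would give $\emptyset\neq\Z(\mu'')\subseteq\Z(\mu')$, a contradiction. I also note that distinct out-edges from $J_\mu$ have distinct first coordinates $\mu_2$, because each $\mu_2\in\J_1$ has a unique partner and an edge is determined by its ordered pair of endpoints. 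For (i), suppose $J_\mu$ emits a non-positive edge to $J_\delta=(\mu_2,e_2\nu_2)$, so $\mu_2\preceq\nu_1$, and let $J_{\delta'}=(\mu_2',e_2'\nu_2')$ be the target of any emitted edge, so $\mu_2'$ is comparable to $\nu_1$. In either subcase ($\mu_2'\preceq\nu_1$ or $\nu_1\preceq\mu_2'$) one finds $\mu_2$ and $\mu_2'$ comparable, whence prefix-freeness forces $\mu_2=\mu_2'$ and thus $J_{\delta'}=J_\delta$; so $J_\mu$ emits only this edge.

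For (ii), suppose $J_\mu$ emits a positive edge. By (i) it then emits no non-positive edge, so every out-edge is positive, with target $\mu_2=\nu_1\gamma\in\J_1$, $|\gamma|\ge1$, and label $S_\gamma$; this already gives the first assertion with $\alpha=\gamma$. It remains to see that the tails $\gamma$ partition $r(\mu)=r(\nu_1)$. The key observation is that the targets of the positive edges are exactly the elements of $\J_1$ properly extending $\nu_1$: the edge with target $\nu_1\gamma\in\J_1$ has label $S_{\nu_1}^*S_{\nu_1\gamma}=S_\gamma\neq0$, and conversely every positive edge has such a target. The existence of a positive edge exhibits $\nu_1$ as a proper prefix of some element of $\J_1$, so Lemma \ref{partitions} supplies a partition of $\nu_1$ lying inside $\J_1$. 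A short cylinder-set argument then shows this partition must exhaust $\{\mu_2\in\J_1\mid\nu_1\prec\mu_2\}$, so that the tails $\gamma$ partition $r(\nu_1)=r(\mu)$, as required.

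The step I expect to require the most care is the identification in (ii) of the positive-edge targets with a genuine partition of $\nu_1$: one must argue that no element of $\J_1$ is a prefix of $\nu_1$ (else prefix-freeness fails against the positive target) and that the partition furnished by Lemma \ref{partitions} covers \emph{all} extensions of $\nu_1$ in $\J_1$ rather than a proper subcollection. I would also double-check the degenerate case $\nu_1=\emptyset$, where $S_{\nu_1}^*=P_{r(e_1)}$ and every out-edge is automatically positive, so the formulas above remain valid once $r(\nu_1)$ is read as $r(e_1)$.
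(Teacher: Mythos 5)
Your proof is correct and follows essentially the same route as the paper: part (i) reduces to the orthogonality of the range projections $P_{\mu_1},P_{\mu_2}$ for distinct $\mu_1,\mu_2\in\J_1$ coming from (\ref{twopartitions}) (which you phrase equivalently as prefix-freeness of $\J_1$), and part (ii) combines the positivity of all out-edges from (i) with Lemma \ref{partitions}. You are somewhat more explicit than the paper about why the out-edge targets exhaust the partition of $\nu_1$ inside $\J_1$, but this is the same argument.
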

\begin{proof}
Ad (i). Let $S_\alpha^*=S_\nu^*S_{\mu_1}=\LL_\J([(\mu,e\nu),J_{\mu_1}]$, and suppose that $[(\mu,e\nu),J_{\mu_2}]$ 
is another edge emitted by $J_\mu$. Then $0\neq S_\nu^*S_{\mu_2}=S_\nu^*S_{\mu_1}S_{\mu_1}^*S_{\mu_2}$. Thus 
$P_{\mu_1}P_{\mu_2}\neq 0$, contradicting (\ref{twopartitions}). 

\vspace{2mm}\noindent
Ad (ii). Let $J_{\delta_j}$, $j=1,\ldots,m$, be the ranges of all edges emitted by vertex $J_\nu=(\mu,e\nu)$. Since $J_\mu$ 
emits a positive edge, part (i) of this lemma implies that all edges $[J_\mu,J_{\delta_j}]$ are positive. Thus, there exist paths 
$\alpha_j\in E^*$ such that $S_\nu^*S_{\delta_j}=S_{\alpha_j}$. Thus $\delta_j=\nu\alpha_j$, and hence
$\{\alpha_j \mid j=1,\ldots,m\}$ constitutes a partition of $r(\mu)=r(\nu)$ by Lemma \ref{partitions}. 
\end{proof}

\begin{corollary}\label{rresolving}
The labeled graph $(E_\J,\LL_\J)$ is \emph{right-resolving}.
That is, none of its vertices emits two distinct edges with the same label $\LL_J$. 
\end{corollary}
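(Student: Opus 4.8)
The plan is to read off the right-resolving property directly from Lemma \ref{outedges}, splitting into cases according to the sign of the degrees of the edges emitted by a fixed vertex. Fix a vertex $J_\mu = (\mu, e\nu) \in E_\J^0$ and suppose it emits two edges $\eta_1, \eta_2$ with $\LL_\J(\eta_1) = \LL_\J(\eta_2)$. Since an edge of $E_\J$ is determined by its source--range pair of vertices and the two edges share the source $J_\mu$, it suffices to prove that $\eta_1$ and $\eta_2$ have the same range.

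First I would treat the degenerate alternative. If $J_\mu$ emits any non-positive edge --- one whose label lies in $\D_E^0$ or has the form $S_\alpha^*$ --- then Lemma \ref{outedges}(i) says $J_\mu$ emits no other edge, so necessarily $\eta_1 = \eta_2$ and we are done. In particular this disposes of the possibility that the common label is a vertex projection or a single backward generator. Hence I may assume that every edge $J_\mu$ emits is positive.

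In the remaining case I invoke Lemma \ref{outedges}(ii): writing $\eta_1 = [J_\mu, J_{\delta_1}]$ and $\eta_2 = [J_\mu, J_{\delta_2}]$, there are paths $\alpha_1, \alpha_2 \in E^*$ with $\LL_\J(\eta_i) = S_{\alpha_i}$ and $\delta_i = \nu\alpha_i$. The hypothesis gives $S_{\alpha_1} = S_{\alpha_2}$, and since $\alpha \mapsto S_\alpha$ is injective on $E^*$ (distinct paths give distinct nonzero elements of $L_\Zb(E)$, as noted in the preliminaries), we get $\alpha_1 = \alpha_2$, hence $\delta_1 = \nu\alpha_1 = \nu\alpha_2 = \delta_2$. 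Because each element of $\J_1$ is the first coordinate of exactly one vertex of $E_\J$, this forces $J_{\delta_1} = J_{\delta_2}$, so $\eta_1$ and $\eta_2$ have the same range and therefore coincide.

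Since the statement follows almost immediately from Lemma \ref{outedges}, I do not expect a genuine obstacle: the content is already packaged in that lemma. The only point demanding care is the bookkeeping of the case split, ensuring that edges with a projection label or a backward-generator label (for which the partition description of part (ii) does not apply) are routed through part (i). Once attention is restricted to positive edges, injectivity of $\alpha \mapsto S_\alpha$ finishes the argument; equivalently, one could observe that the labels of the distinct emitted edges are distinct members of a partition of $r(\mu)$, whose pairwise disjoint cylinder sets $\Z(\alpha_i)$ rule out coincidences.
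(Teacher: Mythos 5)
Your proof is correct and follows exactly the route the paper intends: the corollary is stated as an immediate consequence of Lemma \ref{outedges}, with part (i) handling a vertex emitting a non-positive edge (only one edge, nothing to check) and part (ii) giving that the labels of the positive edges are the $S_{\alpha_j}$ of a partition, so equal labels force equal ranges and hence equal edges. The case split and the appeal to injectivity of $\alpha \mapsto S_\alpha$ (or, equivalently, orthogonality of the partition projections) are exactly the right bookkeeping, and no gap remains.
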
 

\begin{corollary}\label{cycleswithoutexits}
A cycle in graph $(E_\J,\LL_\J)$ has no exits if and only if it is non-positive. 
\end{corollary}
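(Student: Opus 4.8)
The plan is to prove the two implications separately, reading ``the cycle has no exit'' as ``every vertex on the cycle emits exactly one edge,'' and using Lemma \ref{outedges} as the main engine.

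First I would dispose of the easy implication. If the cycle is non-positive, then each of its edges has degree $\le 0$, i.e. is non-positive. Fix a vertex $J_\mu$ on the cycle; it emits the (non-positive) cycle edge, so by Lemma \ref{outedges}(i) it emits no other edge at all. As this holds at every vertex of the cycle, the cycle has no exit. This direction needs nothing beyond part (i).

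For the converse I would argue contrapositively: assuming the cycle carries a positive edge, I would produce an exit. Let $J_\mu$ be a vertex emitting a positive edge. By Lemma \ref{outedges}(i) it cannot also emit a non-positive edge, so \emph{all} its edges are positive, and by Lemma \ref{outedges}(ii) their labels $S_{\alpha_1},\dots,S_{\alpha_m}$ give a partition $\{\alpha_1,\dots,\alpha_m\}$ of $r(\mu)$. If $m\ge 2$ the vertex emits at least two edges and we already have an exit, so the whole point is the case $m=1$: a \emph{single-path partition} $\{\alpha\}$ of $r(\mu)$ with $|\alpha|\ge 1$. Assume now, towards a contradiction, that the cycle has no exit, so that every positive-emitting vertex on it has such a single-path partition. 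The key fact is that a single-path partition $\{\alpha\}$ of a vertex $w$ means $P_\alpha=P_w$, equivalently $\Z(\alpha)=\Z(w)$, which forces $w=s(\alpha)$ and every vertex strictly inside $\alpha$ to emit a \emph{unique} edge of $E$ (only its endpoint $r(\alpha)$ may branch).

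The heart of the matter is to turn this into a cycle of $E$ \emph{without an exit}, contradicting the standing assumption. I would first treat the case where the cycle is non-negative (positive and zero edges only). Writing the cycle as $(\mu^{(1)},e_1\nu_1)\Rightarrow\cdots\Rightarrow(\mu^{(k)},e_k\nu_k)\Rightarrow(\mu^{(1)},e_1\nu_1)$, a positive edge gives $\mu^{(j+1)}=\nu_j\alpha_j$ with $\{\alpha_j\}$ a single-path partition of $r(\mu^{(j)})$, while a zero edge leaves the ``output vertex'' $r(\mu^{(j)})$ unchanged. Since $r(\alpha_j)=r(\mu^{(j+1)})$ equals the source of the next single-path partition, the positive labels $\alpha_j$ concatenate (across the intervening zero edges) into a genuine path in $E$ that closes up into a cycle at $r(\mu^{(1)})$; by the forcing above, every vertex this cycle meets is either the source or an interior vertex of some $\alpha_j$, hence single-emitting. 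Thus it is a cycle of $E$ without an exit --- the desired contradiction.

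The hard part will be the presence of \emph{negative} edges on the no-exit cycle, where the concatenation above breaks down: a negative edge moves the output vertex backward along a path $\beta_j$ about which the single-edge information is not directly available, so one no longer obtains a directed closed walk through single-emitting vertices. Here I would exploit the feature that $u_\J\in\SS_E$ encodes \emph{two} linked partitions: the bijection $\J_1\leftrightarrow\J_2$, $\mu\mapsto\nu$ with $J_\mu=(\mu,\nu)$, preserves both source and range (Lemma \ref{partitions}). The guiding observation --- clearest when the second component $e_j\nu_j$ has length one --- is that a single-path partition forces $\mu^{(j+1)}$ to be the \emph{unique} member of $\J_1$ issuing from its source, whence its partner is the unique member of $\J_2$ from that same source and must itself be a single-path partition; but that partner extends past a branch vertex of $E$, which is impossible. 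Making this global bookkeeping precise, so that it rules out negative edges and closes the mixed case uniformly, is the main obstacle; everything else reduces cleanly to Lemma \ref{outedges} and the standing assumption that every cycle of $E$ has an exit.
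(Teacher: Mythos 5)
Your first implication is exactly the paper's (implicit) argument: every vertex of a non-positive cycle emits a non-positive edge, hence by Lemma \ref{outedges}(i) no other edge, so the cycle has no exit. Your reduction of the converse is also correct as far as it goes: a no-exit cycle can only carry a positive edge if some vertex $J_\mu$ has a unique out-edge whose label $S_\alpha$ gives a \emph{singleton} partition $\{\alpha\}$ of $r(\mu)$, forcing $r(\mu)$ and the interior vertices of $\alpha$ to emit a single edge of $E$; and in the all-non-negative case the $\alpha_j$ do concatenate into an exitless cycle of $E$, contradicting the standing assumption. But the gap you flag in the mixed case is not unfinished bookkeeping: the implication ``no exit $\Rightarrow$ non-positive'' is \emph{false} there, so no argument will close it. Let $E$ have vertices $w_1,w_2$ and edges $p\colon w_1\to w_2$, $g\colon w_2\to w_2$, $h\colon w_2\to w_1$ (finite, no sinks or sources, and every cycle meets the branching vertex $w_2$), and put
\[
\J=\{(h,ggh),\,(ggh,h),\,(p,p),\,(ggg,ggg),\,(ghp,ghp)\},
\]
so that $\J_1=\J_2=\{h,ggh,p,ggg,ghp\}$; using $S_pS_p^*=P_{w_1}$ one checks $\sum_{\mu\in\J_1}P_\mu=1$, hence $u_\J\in\SS_E$. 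Writing the two relevant vertices as $(h,\,g\cdot gh)$ and $(ghp,\,g\cdot hp)$, the only element of $\J_1$ prefix-comparable with $gh$ is $ghp$ and the only one prefix-comparable with $hp$ is $h$, so each of these vertices emits exactly one edge, and together they form a two-cycle with labels
\[
S_{gh}^*S_{ghp}=S_p \quad (\deg = +1), \qquad S_{hp}^*S_h=S_p^* \quad (\deg = -1).
\]
This cycle has no exit yet is not non-positive.

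You should also be aware that the paper offers no proof of this corollary, and only the harmless direction is ever invoked (Lemma \ref{starcyclesmasa} needs ``non-positive $\Rightarrow$ no exit''), so nothing downstream is affected. The converse does follow immediately from Lemma \ref{outedges}(ii) under the extra hypothesis that every vertex of $E$ emits at least two edges (as for the Cuntz algebras): then no singleton partition of positive length exists, so a positive out-edge forces at least two out-edges, i.e.\ an exit. Your argument extends this to arbitrary $E$ for non-negative cycles. For the general statement, your instinct that the negative edges are the true obstruction was exactly right, and the correct conclusion is not that more bookkeeping is needed but that the ``only if'' half requires either that extra hypothesis or a reformulation after rewriting $u_\J$: splitting the example above at $(h,ggh)$ as in Definition \ref{splittingalgorithm} turns the offending cycle into a genuine non-positive (in fact zero) cycle.
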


The following corollary of Lemma \ref{outedges} will be needed later in Section \ref{autodiag}. 

\begin{corollary}\label{diagpart}
Let $\alpha\gamma\in E^*$ with $\alpha\in\J_1$. For an arbitrarily large inetger $l\in\Nb$ there exist paths $A_1,\ldots,A_m\in E_\J^*$ with 
$s_\J(A_i)=J_\alpha$, $|\LL_\J(A_i)|\geq l$, and $\LL_\J(A_i)=S_{\gamma\omega_i}$ with $\{\omega_i\}\subseteq E^*$ such that 
$$ P_{r(\gamma)} = \sum_i P_{\omega_i}. $$ 
\end{corollary}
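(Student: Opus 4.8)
The plan is to reduce the statement to a partition-refinement argument inside the coding graph. First I would record the elementary reduction that, since $S_{\gamma\omega_i}=S_\gamma S_{\omega_i}$ and $S_\gamma^*S_\gamma=P_{r(\gamma)}$, the desired identity $P_{r(\gamma)}=\sum_i P_{\omega_i}$ is equivalent to $\sum_i P_{\gamma\omega_i}=P_\gamma$, i.e.\ to the assertion that the labels $\LL_\J(A_i)=S_{\gamma\omega_i}$ determine a partition of the path $\gamma$; by the cylinder-set description of partitions this just says that the sets $\Z(\gamma\omega_i)$ tile $\Z(\gamma)$. Thus it suffices to produce paths $A_i$ in $E_\J$ starting at $J_\alpha$, of label-length at least $l$, whose labels are positive words having $\gamma$ as a common prefix and whose cylinder sets exhaust $\Z(\gamma)$.

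The engine is the refinement supplied by Lemma \ref{outedges} together with Lemma \ref{partitions}. Whenever a vertex $J_\beta$ of $E_\J$ emits a positive edge, part (ii) of Lemma \ref{outedges} tells us that all of its out-edges are positive, that their labels have the form $S_\rho$, and that the corresponding paths $\rho$ constitute a partition of $r(\beta)$; moreover the range of the edge labelled $S_\rho$ is the vertex of $E_\J$ whose first coordinate is $\nu\rho$, where $e\nu$ denotes the second coordinate of $J_\beta$. Since the label of a positive path is the concatenation $S_{\rho_1}\cdots S_{\rho_t}=S_{\rho_1\cdots\rho_t}$ of the labels of its edges, iterating this observation produces, from any vertex that keeps emitting positive edges, partitions of its range into words of strictly increasing length, each refining the previous one, with Lemma \ref{partitions} guaranteeing that refining a single block again yields a partition. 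Right-resolvingness (Corollary \ref{rresolving}) ensures that a positive path is recovered from its label sequence, so the blocks produced in this way are genuinely distinct and no cylinder is counted twice.

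With the engine in hand I would argue as follows. Starting at $J_\alpha$ I refine the trivial partition $\{r(\alpha)\}=\{s(\gamma)\}$ along positive paths until every label $\rho$ produced has length at least $\max(l,|\gamma|)$. At that stage each such $\rho$ is either comparable with $\gamma$, and then necessarily $\gamma\preceq\rho$, say $\rho=\gamma\omega_i$, or incomparable with $\gamma$, in which case $\Z(\rho)\cap\Z(\gamma)=\emptyset$. Because the full collection of labels partitions $\Z(s(\gamma))\supseteq\Z(\gamma)$, retaining exactly the paths $A_i$ with $\gamma\preceq\LL_\J(A_i)$ leaves a tiling of $\Z(\gamma)$ by the sets $\Z(\gamma\omega_i)$, which is precisely the reduced statement; the bound $|\LL_\J(A_i)|\geq l$ is built into the stopping rule.

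The main obstacle is justifying that this refinement can always be driven to arbitrary length, that is, that we are never permanently trapped before a label of length $\geq\max(l,|\gamma|)$ extending $\gamma$ is reached. By the trichotomy of Lemma \ref{outedges}, a vertex either emits several positive edges (and then refines) or emits a single non-positive edge (which freezes or shortens the current output). Note that at most one current block can be a prefix of $\gamma$, since two prefixes of $\gamma$ would be nested and hence have non-disjoint cylinders, so the only way to fail is for this unique prefix-of-$\gamma$ block never to be pushed past $\gamma$. A permanent failure would force an eventually non-positive infinite path in $E_\J$, hence, $E_\J$ being finite, a non-positive cycle reached through forced single edges; such a cycle has no exit by Corollary \ref{cycleswithoutexits}. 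The heart of the proof is to exclude this configuration, which I would do by reading off from a non-positive exitless cycle in $E_\J$ a corresponding exitless or sink-like configuration in $E$, contradicting the standing assumption that $E$ has no sinks and that every cycle of $E$ has an exit (the latter also guarantees $\Z(\gamma)\neq\emptyset$, so that some block must indeed meet $\Z(\gamma)$). Establishing this exclusion cleanly is the delicate point; the remainder is the cylinder-set bookkeeping described above.
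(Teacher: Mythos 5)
Your reduction to a cylinder-set tiling and your ``engine'' --- iterating Lemma \ref{outedges}(ii) from $J_\alpha$ to produce ever finer partitions of $r(\alpha)$, then keeping exactly the labels that extend $\gamma$ --- is exactly the intended argument (the paper leaves the corollary unproved; its content is essentially Lemma \ref{outpaths} plus the prefix bookkeeping you describe). The gap is in what you call the heart of the proof. You propose to rule out the trapping configuration (an eventually non-positive path, hence a non-positive cycle in the finite graph $E_\J$) by extracting from such a cycle a sink or an exitless cycle in $E$, contradicting the standing assumptions. This cannot work: non-positive cycles in $E_\J$ are entirely compatible with the standing assumptions on $E$. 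Indeed the paper is organized around this possibility --- termination case (i) of the splitting algorithm and Lemma \ref{starcyclesmasa} treat precisely the unitaries whose coding graphs have non-positive cycles, and these exist for perfectly good graphs. Concretely, take $E$ to be the one-vertex, two-loop graph and $\J=\{(1,21),(21,22),(22,1)\}$; then $u_\J$ is unitary, but the vertex $(1,21)$ of $E_\J$ emits a single edge, namely a loop labeled by the vertex projection (since $S_1^*S_1=1$ while $S_1^*S_{21}=S_1^*S_{22}=0$). Every path out of $J_1$ has $\LL_\J$-label of length $0$, so the conclusion of the corollary fails outright for $\alpha=1$. Hence no argument from the standing assumptions alone can close your gap: the statement is simply false at that level of generality.

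The correct repair is to read the corollary with the hypotheses under which it is actually invoked in Theorem \ref{diagonalauto}: $E_\J$ has only non-negative edges and contains no non-positive cycles. Under those hypotheses your obstacle evaporates by a two-line finiteness argument: every vertex of $E_\J$ emits at least one edge (because $\J_1$ gives a partition of unity, some $S_{\nu}^*S_{\mu_2}\neq 0$), every edge has degree $\geq 0$, and any path of length $|E_\J^0|$ must contain a positive edge (else it would revisit a vertex along zero edges and produce a non-positive cycle). Hence a path of length $N\cdot|E_\J^0|$ out of $J_\alpha$ has label length at least $N$, and taking all such paths for $N=\max(l,|\gamma|)$ and applying Lemma \ref{outpaths} gives the partition from which your prefix-selection step finishes the proof. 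With that substitution for your exclusion step, the rest of your argument is sound.
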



\section{Splitting}

If $u_\J$ is a {\em permutative} unitary, that is if $u_\J\in\SS_E\cap\F_{L_\Zb(E)}$, then its coding graph takes a particularly 
simple form. Namely, the $\LL_\J$ label of each edge in $E_\J^1$ is $S_f$ for some $f\in E^1$. It is largely due to 
this fact that permutative endomorphisms are relatively easier to analyze, as was done in \cite{AJSz, CHSjfa, CStams}.
Unfortunately, a general coding graph $E_\J$ may contain a variety of positive and negative edges. 
Dealing with such graphs and thus analyzing the corresponding endomorphisms is substantially more involved. 
To alleviate this problem, in the present section we discuss a procedure called {\em splitting} for rewriting a unitary 
$u_\J$ in such a way that the corresponding coding graph either has only non-negative edges, or else admits a 
non-positive cycle. In the former case, dealing with the coding graph $E_\J$ becomes much easier, see section 
\ref{MASA}  below. In the latter case, endomorphism $\Lambda_\J$ does not restrict to an 
automorphism of $\D_{L_{\Zb(E)}}$, see Lemma \ref{starcyclesmasa} below. 
Hence $\lambda_\J$ cannot restrict to an automorphism of $\D_{C^*(E)}$ (Lemma \ref{masaisoiff}).

\begin{definition}\label{splittingdef}
Let $u_\J$ be a unitary in $\SS_E$ and let $E_\J$ be its coding graph. The {\em splitting} of $E_\J$ at vertex 
$(\mu,e\nu)\in E_\J^0$ is the coding graph $E_{\J'}$, where 
$$ \J' = (\J\setminus\{(\mu,e\nu)\}) \cup \{(\mu f,e\nu f) \mid f\in E^1,\,s(f)=r(\mu)\}. $$
\end{definition}
Note that $u_{\J'}=u_\J$, so that every splitting of $E_\J$ constitutes another coding graph for the same unitary $u_\J$. 
We will only perform splittings at vertices which emit only positive edges, and from now on this is always assumed. 

If $\eta=[(\mu_0,e_0\nu_0),(\mu_1,e_1\nu_1)]$ is a negative edge in $E_\J^1$ and there is a zero path 
$(\mu_1,e_1\nu_1)\Rightarrow(\mu_k,e_k\nu_k)$, possibly of length $0$,  such that its range $(\mu_k,e_k\nu_k)$ 
emits only positive edges, then edge $\eta$ will be called {\em final negative}. The vertex $(\mu_k,e_k\nu_k)$ will be 
called the {\em destination} of $\eta$ and denoted $d(\eta)$.  The length of the zero path emitted by a final negative edge 
$\eta$ will be called {\em height} of $\eta$ and denoted $h_\J(\eta)$. 

Consider a splitting described in Definition \ref{splittingdef}. 
Edges between vertices different from $(\mu,e\nu)$ are unaffected by the splitting. 
New edges in graph $E_{\J'}$ created by the splitting may be described as follows. 
\begin{description}
\item{(SE1)} If the graph $E_\J$ contains an edge $\eta=[(\mu,e\nu),(\delta,g\gamma)]$ with $\mu\neq\delta$, 
then there is exactly one $f\in E^1$, $s(f)=r(\mu)$, such that the graph $E_{\J'}$ contains an edge $\eta'=
[(\mu f,e\nu f),(\delta,g\gamma)]$. We have $\deg(\eta')=\deg(\eta)-1$. 
\item{(SE2)} If the graph $E_\J$ contains an edge $\eta=[(\mu,e\nu),(\mu,e\nu)]$, 
then there is exactly one $f\in E^1$, $s(f)=r(\mu)$, such that the graph $E_{\J'}$ contains an edge $\eta'_b=
[(\mu f,e\nu f),(\mu b,e\nu b)]$ for each $b\in E^1$, $s(b)=r(\mu)$. We have $\deg(\eta'_b)=\deg(\eta)$. 
\item{(SE3)} If the graph $E_\J$ contains a non-negative edge $\eta=[(\delta,g\gamma),(\mu,e\nu)]$ with $\mu\neq\delta$,  
then for each $f\in E^1$, $s(f)=r(\mu)$, the graph $E_{\J'}$ contains  an edge $\eta'_f=[(\delta,g\gamma),(\mu f,e\nu f)]$. 
We have $\deg(\eta'_f)=\deg(\eta)+1$. 
\item{(SE4)} If the graph $E_\J$ contains a negative edge $\eta=[(\delta,g\gamma),(\mu,e\nu)]$ with $\mu\neq\delta$,  
then there is exactly one $f\in E^1$, $s(f)=r(\mu)$, such that the graph $E_{\J'}$ contains an edge $\eta'=
[(\delta,g\gamma),(\mu f,e\nu f)]$. We have $\deg(\eta')=\deg(\eta)+1$. The edge $\eta'\in E_{\J'}^1$ will be called the 
{\em descendant} of the edge $\eta\in E_\J^1$. 
\end{description}

We are now going to describe the {\em splitting algorithm} for transforming coding graphs. 

\begin{definition}\label{splittingalgorithm}
Let $u_\J\in\SS_E$. If the graph $(E_\J,\LL_\J)$ either 
\begin{description}
\item{(i)} contains a non-positive cycle, or 
\item{(ii)} all its edges are non-negative 
\end{description}
then the algorithm ends. Otherwise, perform a splitting at the destination of a final negative edge with the lowest height. 
\end{definition}

If a splitting is performed at the destination of a final negative edge $\eta$ then this edge will be called {\em active}. Note 
that there may be more than one active edge. 

Applicability of the splitting algorithm is justified by the following theorem. 

\begin{theorem}\label{splittingtheorem}
For each $u_\J\in\SS_E$, the splitting algorithm described in Definition \ref{splittingalgorithm} 
terminates after finitely many steps.  
\end{theorem}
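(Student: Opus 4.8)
The plan is to exhibit a monovariant, valued in a well-ordered set, that strictly decreases with every splitting step. The structural fact I would lean on throughout is Lemma~\ref{outedges}(i): in any coding graph a vertex emitting a non-positive edge emits no other edge. In particular a vertex emitting a zero edge emits only that edge, so the maximal path of zero edges issuing from a given vertex is unique and forced, terminating (by finiteness and the absence of a non-positive cycle) at a vertex that emits either only positive edges or a single negative edge. This already shows the algorithm is well defined before it stops: starting from any negative edge and repeatedly following forced zero paths produces a walk of non-positive edges which, absent a non-positive cycle, must reach a positive-emitting vertex, and the last negative edge traversed is then final negative. As $E_\J$ is finite, the minimal height is attained and the splitting vertex exists.

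Next I would set
\[
  N = \sum_{\eta \text{ negative}} |\deg(\eta)|, \qquad H = \sum_{\eta \text{ final negative}} h_\J(\eta),
\]
and order the pairs $(N,H) \in \Nb \times \Nb$ lexicographically. The essential point is that a splitting is only ever carried out at a vertex $w$ emitting \emph{only positive} edges, and for such $w$ the rules (SE1)--(SE4) never manufacture a negative edge: by (SE1)--(SE2) the out-edges of $w$ drop in degree by at most one from a value $\geq 1$ and so remain non-negative, while by (SE3)--(SE4) the in-edges of $w$ rise in degree by one. Hence $N$ is non-increasing at every step, and it strictly decreases exactly when $w$ has an in-edge of negative degree, because (SE4) then lowers that edge's $|\deg|$ by one.

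I would then argue by the minimal height $h$ of the chosen final negative edge $\eta$. If $h=0$ then $\eta$ is itself a negative in-edge of $w=d(\eta)$, so $N$ drops by at least one and $(N,H)$ decreases. If $h \geq 1$ then $w$ can have no negative in-edge at all, since any negative edge landing on the positive-emitting vertex $w$ would be a final negative edge of height $0$, contradicting minimality; thus only (SE1)--(SE3) occur, $N$ is unchanged, and it remains to see that $H$ decreases. Each final negative edge with destination $w$ — there is at least $\eta$ — has, as the last edge of its forced zero path, the unique zero edge entering $w$; by (SE3) this edge becomes positive and its source now emits only positive edges, so the edge's destination retreats one step and its height falls by exactly one. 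Every other final negative edge has a forced zero path avoiding $w$, and the new vertices replacing $w$ receive only positive in-edges (no (SE4) edges arise), so no zero path can reach them; hence no other height moves and no new final negative edge appears, and $H$ strictly decreases.

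In either case $(N,H)$ strictly decreases in the lexicographic order on $\Nb \times \Nb$, which is a well-order, while $N$ and $H$ stay finite because each splitting keeps the graph finite. Therefore no infinite run is possible and the algorithm halts. The step I expect to be most delicate is the $h \geq 1$ case: one must confirm, using the right-resolving property (Corollary~\ref{rresolving}) together with the partition property of Lemma~\ref{outedges}(ii), that lowering the degrees of the out-edges of $w$ cannot inadvertently lengthen some other negative edge's zero path, so that $H$ genuinely decreases rather than merely being redistributed.
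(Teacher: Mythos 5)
Your proof is correct and follows essentially the same strategy as the paper's: the paper also argues that splitting never increases the stock of negative edges, that splitting at a height-$0$ destination pushes the degrees of the active negative edges up toward $0$, and that splitting at a positive-height destination lowers the heights of the active edges while leaving the others untouched. Your lexicographic monovariant $(N,H)$ is simply a cleaner formalization of the paper's informal ``eventually'' arguments, and your checks that the algorithm is well defined (final negative edges exist absent a non-positive cycle) and that no new final negative edges arise (since the new vertices receive only positive in-edges) correctly fill in details the paper leaves implicit.
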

\begin{proof}
We assume that the graph $(E_\J,\LL_\J)$ contains some negative edges and does not contain non-positive cycles. 
It follows immediately from (ES1)--(SE4) that splitting does not increase the number of negative edges. Thus, it suffices 
to show that sufficiently many splittings will eventually decrease this number. For this to happen, we must show that 
a sufficiently long sequence of splittings will create a final negative edge with height $0$ and degree $-1$. 

Now, if the graph $E_\J$ contains a final negative edge with height $0$, then the splitting will be performed at such a vertex 
(there may be more than one), and will result in increasing the degree of each of the active edges with height $0$. Thus, 
eventually, one of these edges will have degree $-1$. 

On the other hand, suppose that heights of all final negative edges are positive. It follows from (SE1)--(SE4) that 
final negative edges in $E_\J^1$ will remain so in $E_{\J'}^1$ after one application of splitting. The heights of active edges 
will decrease by $1$, while the heights of inactive ones will remain unchanged. Thus, after a sufficiently long sequence of 
splittings, there will be created a final negative edge with height $0$. 
\end{proof}


\section{Automorphisms of the diagonal MASA}\label{MASA}\label{autodiag}

\begin{lemma}\label{starcyclesmasa}
Let $u_\J\in\SS_E$. 
If its coding graph $E_\J$ contains a non-positive cycle then the endomorphism $\Lambda_\J$ does not restrict to an automorphism of the diagonal. 
That is, $\Lambda_\J(\D_{L_\Zb(E)})$ is a proper subset of $\D_{L_\Zb(E)}$ and consequently $\Lambda_\J\not\in\aut(L_\Zb(E))$. 
\end{lemma}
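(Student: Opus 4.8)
The plan is to prove the stronger and cleaner assertion that $\Lambda_\J(\D_{L_\Zb(E)})$ is a \emph{proper} subset of $\D_{L_\Zb(E)}$; since $\Lambda_\J$ is injective and already satisfies $\Lambda_\J(\D_{L_\Zb(E)})\subseteq\D_{L_\Zb(E)}$, failure of surjectivity on the diagonal gives $\Lambda_\J\notin\aut(L_\Zb(E))$ for free. Rather than search globally for a missing projection, I would compress everything to a single vertex of the cycle. Fix a vertex $J_\mu=(\mu,e\nu)$ lying on the non-positive cycle, with $\mu\in\J_1$, and compare the two corners $P_\mu\,\Lambda_\J(\D_{L_\Zb(E)})\,P_\mu$ and $P_\mu\,\D_{L_\Zb(E)}\,P_\mu=P_\mu\D_{L_\Zb(E)}$ (the second equality holding because $\D_{L_\Zb(E)}$ is abelian). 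The goal is to show the former is contained in $\Zb P_\mu$ while the latter is strictly larger; since the compression $d\mapsto P_\mu d P_\mu$ is linear and respects inclusions, this forces $\Lambda_\J(\D_{L_\Zb(E)})\subsetneq\D_{L_\Zb(E)}$.

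The first step is to control the coding-graph paths leaving $J_\mu$. By Corollary \ref{cycleswithoutexits} the non-positive cycle has no exits, and by Lemma \ref{outedges}(i) each of its vertices, emitting a non-positive edge, emits no other edge at all. Hence the forward orbit of $J_\mu$ is forced to run around the cycle, and for each nonempty $\beta\in E^*$ there is a path $A\in E_\J^*$ with $s_\J(A)=J_\mu$ and $\E(A)=\beta$ if and only if $\beta$ is a prefix of the single periodic infinite path $\xi\in E^\infty$ obtained by reading $\E$ along the infinite repetition of the cycle; moreover such an $A$ is then unique and every one of its edges is non-positive. Feeding this into Corollary \ref{imageofprojpath}, I obtain that $P_\mu\Lambda_\J(S_\beta)$ is the single term $S_\mu\,\LL_\J(A)\,\LL_r(A)^*$ when $\beta\prec\xi$ and is $0$ otherwise. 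Because $A$ is non-positive, the product $\LL_\J(A)$ of its edge labels collapses to a single $S_\theta^*$ or to a vertex projection.

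The decisive computation is the corner itself. Writing $Z=\LL_\J(A)\,\LL_r(A)^*$, we have $P_\mu\Lambda_\J(P_\beta)P_\mu=(P_\mu\Lambda_\J(S_\beta))(P_\mu\Lambda_\J(S_\beta))^*=S_\mu\,ZZ^*\,S_\mu^*$, and $ZZ^*=\LL_\J(A)\,P_w\,\LL_\J(A)^*$ where $P_w=\LL_r(A)^*\LL_r(A)$ is a vertex projection. Here non-positivity is exactly what matters: the range projection of $S_\theta^*$ is the \emph{full} vertex projection $P_{r(\theta)}$, whereas a positive label $S_\theta$ would instead yield the proper range projection $P_\theta$. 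Consequently $ZZ^*$ is either $0$ or a vertex projection, and compressing by $S_\mu$ gives $P_\mu\Lambda_\J(P_\beta)P_\mu\in\{0,P_\mu\}$ for every path $\beta$ of positive length; for $\beta$ a vertex the same holds trivially since $\Lambda_\J$ fixes the vertex projections. By linearity, $P_\mu\Lambda_\J(\D_{L_\Zb(E)})P_\mu\subseteq\Zb P_\mu$.

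Finally I would observe that $P_\mu\D_{L_\Zb(E)}P_\mu$ is strictly larger than $\Zb P_\mu$. By the standing assumption the graph has no sinks and every cycle has an exit, so following edges from $r(\mu)$ one must reach a vertex emitting at least two distinct edges $f_1,f_2$ (otherwise the forward path from $r(\mu)$ would be a cycle without exit). This produces nonzero orthogonal projections $P_{\mu\rho f_1},P_{\mu\rho f_2}\leq P_\mu$ lying in $P_\mu\D_{L_\Zb(E)}P_\mu$ but not in $\Zb P_\mu$. Hence the two corners differ, giving the claimed proper inclusion. The main obstacle I anticipate is the bookkeeping in the decisive computation: verifying cleanly that the product of non-positive edge labels along $A$ collapses to a single $S_\theta^*$ (or vertex projection) and that $S_\theta^*P_wS_\theta$ is a full vertex projection rather than a proper subprojection. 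This is precisely where the hypothesis \emph{non-positive} enters in an essential way, so it is worth isolating as a short sublemma to make the dichotomy $P_\mu\Lambda_\J(P_\beta)P_\mu\in\{0,P_\mu\}$ entirely rigorous.
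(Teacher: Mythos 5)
Your proposal is correct and follows essentially the same route as the paper: compress by the projection $P_\mu$ of a vertex on the non-positive cycle, use Corollary \ref{cycleswithoutexits} (and Lemma \ref{outedges}) to see that every coding-graph path out of $J_\mu$ just winds around the cycle, and conclude that the compressed image of the diagonal consists only of integer multiples of $P_\mu$. The only difference is that you make explicit the final step --- that $P_\mu\D_{L_\Zb(E)}$ is strictly larger than $\Zb P_\mu$, via the no-sinks and exit conditions --- which the paper leaves implicit.
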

\begin{proof}
Let $(\mu_1,e_1\nu_1)\Rightarrow(\mu_k,e_k\nu_k)$ be a non-positive cycle. 
We denote it $\eta$, and for each $r=2,\ldots,k$ we denote the initial subpath $(\mu_1,e_1\nu_1)\Rightarrow(\mu_r,e_r\nu_r)$ of $\eta$ by $\eta_r$. 
By Corollary \ref{cycleswithoutexits}, the cycle $\eta$ has no exit. 
By Lemma \ref{imageofpath}, for each $\alpha\in E^*$ we have 
\[
	P_{\mu_1}\Lambda_\J(S_\alpha) = \sum_{\E(\omega)=\alpha}P_{\mu_1}\LL_s(\omega)\LL_\J(\omega)\LL_r(\omega)^*. 
\]
If this sum is non-zero then $\LL_s(\omega)=\mu_1$, and hence $s(\omega)=(\mu_1,e_1\nu_1)$. 
Thus $\omega = \eta\ldots\eta\eta_r$ for some $r$, with initial subpath $\eta\ldots\eta$ consisting of the cycle $\eta$ repeated $m$ times (possibly $m=0$). 
Consequently, $P_{\mu_1}\lambda_\J(\D_{L_\Zb(E)})$ is the closed span of elements of the form 
\[
	S_{\mu_1}\LL_\J(\eta)^m\LL_J(\eta_r)\LL_r(\eta_r)^*\LL_r(\eta_r) \LL_\J(\eta_r)^*\LL_J(\eta)^{*m}S_{\mu_1}^*. 
\]
Since the cycle $\eta$ is non-positive, each such an element is either $0$ or equal to $P_{\mu_1}$.
Thus $P_{\mu_1}\Lambda_\J(\D_{L_\Zb(E)})=\Cb P_{\mu_1}$ and $\Lambda_\J(\D_{L_\Zb(E)})\neq\D_{L_\Zb(E)}$.  
\end{proof}

Taking into account Theorem \ref{splittingtheorem} and Lemma \ref{starcyclesmasa} above, we may from now on restrict our attention to those $u_\J\in\SS_E$ whose corresponding coding graphs $E_\J$ have {\bf only non-negative edges and contain no non-positive cycles}. 
In this setting we get some nice extra properties of the coding graph.

\begin{lemma} \label{lresolving}
Let $u_\J\in\SS_E$. 
If the coding graph $E_\J$ has only non-negative edges, then $E_\J$ is \emph{left-resolving} in the $\E$-label. 
That is, if $\omega,\xi \in E_\J^1$ are such that $\E(\omega) = \E(\xi)$ and $r(\omega) = r(\xi)$, then $\omega = \xi$.
\end{lemma}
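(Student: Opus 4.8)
The plan is to unpack the two edges as $\omega = [(\mu_1,e_1\nu_1),(\mu_2,e_2\nu_2)]$ and $\xi = [(\mu_1',e_1'\nu_1'),(\mu_2',e_2'\nu_2')]$, and to observe first that by Definition \ref{codinggraph} an edge of $E_\J$ is literally the ordered pair of its endpoints; hence at most one edge joins any ordered pair of vertices, and to prove $\omega = \xi$ it suffices to show $s_\J(\omega) = s_\J(\xi)$, since $r_\J(\omega) = r_\J(\xi)$ is assumed. The hypotheses supply two cheap equalities to begin from: $r_\J(\omega) = r_\J(\xi)$ forces the range vertices to coincide, so in particular $\mu_2 = \mu_2'$ and $e_2 = e_2'$; and $\E(\omega) = e_1 e_2 = e_1' e_2' = \E(\xi)$ together with $e_2 = e_2'$ forces $e_1 = e_1'$. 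Thus the two source vertices already agree in the first edge of their second coordinate, and it remains only to identify $\nu_1$ with $\nu_1'$ and $\mu_1$ with $\mu_1'$.

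Next I would feed in the non-negativity hypothesis. Since $\omega$ and $\xi$ are non-negative, their labels $\LL_\J(\omega) = S_{\nu_1}^* S_{\mu_2}$ and $\LL_\J(\xi) = S_{\nu_1'}^* S_{\mu_2'}$ have degree at least $0$, hence are of the form $S_\alpha$ and $S_{\alpha'}$ respectively; this is exactly the assertion that $\nu_1$ is a prefix of $\mu_2$ and $\nu_1'$ is a prefix of $\mu_2'$. As $\mu_2 = \mu_2'$, the paths $\nu_1$ and $\nu_1'$ are both prefixes of the common path $\mu_2$ and are therefore comparable to one another; prepending the common edge $e_1 = e_1'$ preserves comparability, so $e_1\nu_1$ and $e_1\nu_1'$ are comparable paths, both lying in $\J_2$.

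The decisive step is then the partition property of $\J_2$. Because $u_\J$ is unitary, (\ref{twopartitions}) exhibits $\J_2$ as a partition of the unit, so, via the cylinder-set description of partitions, its elements have pairwise disjoint cylinder sets and are thus pairwise incomparable as paths. Comparability of $e_1\nu_1$ and $e_1\nu_1'$ therefore forces $e_1\nu_1 = e_1\nu_1'$, i.e. $\nu_1 = \nu_1'$. Finally, to pass from equality of second coordinates to equality of first coordinates I would use that the second coordinate of a pair in $\J$ determines the first: this is the $\J_2$-analogue of the uniqueness of $J_\mu$, obtained by applying that uniqueness to $u_\J^* \in \SS_E$, whose coding data is $\{(\nu,\mu) \mid (\mu,\nu)\in\J\}$ with the roles of $\J_1$ and $\J_2$ interchanged. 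Hence $\mu_1 = \mu_1'$, the source vertices coincide, and $\omega = \xi$.

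I expect the only genuine subtlety to lie in this last step, namely justifying that the second coordinate pins down the first; the incomparability of distinct elements of $\J_2$ is immediate from the partition/cylinder description, and all the remaining work is routine bookkeeping on prefixes.
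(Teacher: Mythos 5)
Your proof is correct and follows essentially the same route as the paper's: reduce to equality of source vertices, extract $e_1=e_1'$ from the $\E$-labels, use non-negativity to see that $\nu_1$ and $\nu_1'$ are both prefixes of the common $\mu_2$ and hence comparable, and invoke the partition property (\ref{twopartitions}) of $\J_2$ to force $e_1\nu_1=e_1\nu_1'$. The only difference is that you spell out the final step (that the second coordinate of a pair in $\J$ determines the first, via the $u_\J^*$ symmetry), which the paper leaves implicit.
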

\begin{proof}
Let 
\[
	r(\omega) = \vertexbox{e}{\mu}{\nu} = r(\xi), \quad s(\omega) = \vertexbox{f_1}{\mu_1}{\nu_1}\text{ ,} \quad \text{ and } s(\xi) = \vertexbox{f_2}{\mu_2}{\nu_2} \text{ }.
\]
Since $E_\J$ does not have multiple edges, it suffices to prove that $s(\omega) = s(\xi)$, which will follow if $f_1 \nu_1 = f_2 \nu_2$.
Let $f = f_1$.
From $\E(\omega) = \E(\xi)$ we get that $f_1 = f_2$. 
From the fact that the edge $\omega$ exist and that all edges in $E_\J$ are non-negative we get that $\nu_1 \prec \mu$.
Similarly $\nu_2 \prec \mu$. 
Therefore we have that either $\nu_1 \prec \nu_2$ or $\nu_2 \prec \nu_1$. 
So either $f \nu_1 \prec f \nu_2$ or $f \nu_2 \prec f \nu_1$.
Since $u_\J \in \SS_E$ this is only possible if $f_1 \nu_1 = f \nu_2 = f_2 \nu_2$.
\end{proof} 

Note that the left-resolving property extends to paths.
That is, if $\omega$ and $\xi$ are paths with $\E(\omega) = \E(\xi)$ and $r(\omega) = r(\xi)$, then $\omega = \xi$. 
To see this, one first observes that left-resolving implies that the final edges of $\omega$ and $\xi$ must agree, and then one simply works backward through the path. 

The following strengthening of Lemma \ref{outedges}, comes simply by noticing that Lemma \ref{outedges} applies to all vertices in the case of no negative edges.

\begin{lemma} \label{outpaths}
Let $u_\J\in\SS_E$. 
We assume that the coding graph $E_\J$ has only non-negative edges and does not contain any non-positive cycles. 
For any vertex $J_\mu \in E_\J^0$ and any $l \in \Nb$, we have that
\[
	\setof{\LL_\J(\omega)}{\omega \in E_\J^l, s(\omega) = J_\mu }
\]
is a partition of $r(\mu)$.
\end{lemma}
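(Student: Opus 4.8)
The plan is to prove Lemma \ref{outpaths} by induction on the path length $l$, using Lemma \ref{outedges}(ii) as the base case and the left-resolving property (Lemma \ref{lresolving}) to organize the inductive step. The statement asserts that the labels $\LL_\J(\omega)$ over all paths $\omega$ of length $l$ starting at a fixed vertex $J_\mu$ form a partition of $r(\mu)$; that is, the corresponding range projections $P_{\LL_\J(\omega)}$ sum to $P_{r(\mu)}$ and are pairwise orthogonal. Since every edge is non-negative and there are no non-positive cycles, every edge must in fact be positive, so each label $\LL_\J(\omega)$ is of the form $S_\alpha$ for some genuine path $\alpha \in E^*$, and $\LL_\J(\omega)$ concatenates these along the path.

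For the base case $l = 1$, I would invoke Lemma \ref{outedges}(ii) directly: since $J_\mu$ emits only positive edges, the labels of the edges it emits are exactly $\{S_{\alpha_j}\}$ where $\{\alpha_j\}$ is a partition of $r(\mu)$. For the inductive step, suppose the claim holds for length $l$. Given a path $\omega$ of length $l$ from $J_\mu$, it reaches some vertex $J_\delta = r(\omega)$ with $\LL_\J(\omega) = S_\beta$ and $r(\beta) = r(\delta)$. Applying Lemma \ref{outedges}(ii) at $J_\delta$, the edges emitted by $J_\delta$ carry labels $\{S_{\gamma_i}\}$ forming a partition of $r(\delta)$, so extending $\omega$ by one edge gives labels $S_\beta S_{\gamma_i} = S_{\beta\gamma_i}$. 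I would then argue that $\sum_i P_{\beta\gamma_i} = P_\beta$ (a partition of $r(\delta)$ refines the range projection of $\beta$), so summing over all length-$l$ paths and their one-edge extensions telescopes the partition sum up to $P_{r(\mu)}$ by the inductive hypothesis.

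**The main obstacle will be** establishing pairwise orthogonality of the range projections across \emph{different} length-$l$ paths, rather than merely the summation identity. Two distinct length-$l$ paths $\omega \neq \xi$ from $J_\mu$ could conceivably carry labels $S_\beta$ and $S_{\beta'}$ whose range projections overlap. Here I would use the left-resolving property (Lemma \ref{lresolving}, extended to paths as noted in the remark following it): distinct paths with the same source that share an initial segment must first diverge at some vertex, and at that vertex Lemma \ref{outedges}(ii) guarantees the two outgoing edges carry labels with orthogonal range projections, since those labels form part of a single partition. Tracking this through the concatenated labels shows $P_\beta P_{\beta'} = 0$ whenever $\omega \neq \xi$. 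Combined with the summation identity, this yields that $\{\LL_\J(\omega)\}$ is genuinely a partition of $r(\mu)$.

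Alternatively, **I would also consider** a cleaner non-inductive route via Lemma \ref{imageofpath}. Since all edges are positive, for any $\alpha \in E^*$ with $|\alpha| = l$ the formula $\Lambda_\J(S_\alpha) = \sum_{\E(\omega) = \alpha} \LL_s(\omega)\LL_\J(\omega)\LL_r(\omega)^*$ simplifies considerably, and summing the associated range projections $\sum_\omega P_{\mu_1}\LL_\J(\omega)\LL_\J(\omega)^* P_{\mu_1}$ over paths starting at $J_\mu$ should recover $P_{r(\mu)}$ directly from the unitarity condition \eqref{twopartitions} applied iteratively. This approach trades the explicit orthogonality bookkeeping for the identity $\Lambda_\J(P_v) = P_v$ together with the fact that $\Lambda_\J$ is a $*$-homomorphism, which automatically preserves the partition structure. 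I expect the inductive argument to be the most transparent to write out, with left-resolving carrying the orthogonality claim.
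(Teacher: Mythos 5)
Your overall strategy---induct on $l$, using Lemma \ref{outedges} to see that the outgoing edge labels at each vertex form a partition and then refining partition by partition---is exactly the argument the paper has in mind; the paper in fact offers no written proof, only the remark that the result ``comes simply by noticing that Lemma \ref{outedges} applies to all vertices in the case of no negative edges.'' So in substance you match the paper. Two of your worries are also non-issues: the paper's definition of a partition of $v$ is only the summation identity $\sum_i P_{\mu_i}=P_v$, so pairwise orthogonality of the range projections need not be established separately (and to the extent you do want it, the relevant fact is that $(E_\J,\LL_\J)$ is \emph{right}-resolving, Corollary \ref{rresolving}, not the left-resolving property of the $\E$-label in Lemma \ref{lresolving}).

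There is, however, one concrete false step: the assertion that ``since every edge is non-negative and there are no non-positive cycles, every edge must in fact be positive.'' The hypothesis only excludes \emph{cycles} consisting entirely of degree-$0$ edges; individual degree-$0$ edges are perfectly possible and actually occur in the paper's running examples (the edges labelled $S_\emptyset$ in Examples \ref{exam1} and \ref{exam2}, and $P_w$ in Example \ref{exam3}). As written, your inductive step invokes Lemma \ref{outedges}(ii) at $r(\omega)$, which requires that vertex to emit a positive edge, so the argument fails at any vertex whose unique outgoing edge has degree $0$. The repair is easy and is what the paper intends: by Lemma \ref{outedges}(i) such a vertex emits exactly one edge, its label is the vertex projection $P_{r(\mu)}=S_{r(\mu)}$, and the singleton $\{r(\mu)\}$ (a path of length $0$) is the trivial partition of $r(\mu)$; with this case added, the concatenation identity $\sum_i P_{\beta\gamma_i}=S_\beta\bigl(\sum_i P_{\gamma_i}\bigr)S_\beta^{*}=P_\beta$ and the telescoping of the inductive step go through verbatim. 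You should also then phrase the conclusion as ``each $\LL_\J(\omega)$ equals $S_\beta$ for a path $\beta$ of length $\geq 0$'' rather than for a path of positive length.
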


We now define a property of the coding graph that will turn out to be equivalent to $\Lambda_u$ restricting to an automorphism of the diagonal. 

\begin{definition}\label{leftsync}
We say that the labeled graph $(E_\J,\E)$ is {\em left-synchronizing} (with delay $m$) if there exists an $m\in\Nb$ such that for any two paths $\omega_1,\omega_2\in E_\J^m$ of length $m$ if $\E(\omega_1)=\E(\omega_2)$ then $s(\omega_1)=s(\omega_2)$. 
\end{definition}

We first show that a left-synchronizing coding graph implies that the endomorphism restricts to an isomorphism on $\D_{L_\Zb(E)}$. 
A similar criterion for endomorphisms of the Cuntz algebras $\OO_n$ has been found earlier in \cite{CHSam}.  

\begin{theorem}\label{diagonalauto}
Let $u_\J\in\SS_E$. 
We assume that the coding graph $E_\J$ has only non-negative edges and does not contain any non-positive cycles. 
If the labeled graph $(E_\J,\E)$ is left-synchronizing then the endomorphism $\Lambda_\J \colon L_{\Zb}(E) \to L_{\Zb}(E)$ 
restricts to automorphism of the MASA $\D_{L_\Zb(E)}$. 
\end{theorem}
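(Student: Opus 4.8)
The plan is to pass to the $C^*$-algebra and argue topologically. By Lemma~\ref{masaisoiff} it suffices to prove that $\lambda_\J$ restricts to an automorphism of $\D_{C^*(E)}$. Now $\lambda_\J$ is injective (Cuntz--Krieger uniqueness) and $\lambda_\J(\D_{C^*(E)})\subseteq\D_{C^*(E)}$ since $u_\J$ normalizes the diagonal, so the restriction $\theta:=\lambda_\J|_{\D_{C^*(E)}}$ is an injective unital $*$-endomorphism of the commutative $C^*$-algebra $\D_{C^*(E)}\cong C(E^\infty)$. Hence $\theta$ has the form $f\mapsto f\circ\psi$ for a unique continuous $\psi\colon E^\infty\to E^\infty$, and injectivity of $\theta$ forces $\psi$ to be surjective. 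Everything thus reduces to showing that $\psi$ is \emph{injective}: then $\psi$ is a continuous bijection of the compact Hausdorff space $E^\infty$, hence a homeomorphism, so $\theta$ is onto and therefore an automorphism, which is what we want.

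First I would obtain an explicit symbolic description of $\psi$. The $C^*$-version of Lemma~\ref{imageofpath} gives $\lambda_\J(s_\delta)=\sum_{\E(\omega)=\delta}\LL_s(\omega)\LL_\J(\omega)\LL_r(\omega)^*$, and under the standing reductions (all edges of $E_\J$ non-negative, no non-positive cycles) every label $\LL_\J(\omega)$ equals some $s_\alpha$. Expanding $\lambda_\J(p_\delta)=\lambda_\J(s_\delta)\lambda_\J(s_\delta)^*$ and using that the result must lie in $\D_{C^*(E)}$, all off-diagonal cross-terms are forced to cancel; the left-resolving property (Lemma~\ref{lresolving}, extended to paths) together with Lemma~\ref{outpaths} ensures that the surviving terms are pairwise orthogonal cylinder projections, one for each $\omega$ with $\E(\omega)=\delta$. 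Dually this means that $\psi$ is computed by ``reading'' $x\in E^\infty$ through the coding graph: there is a unique $\mu_0\in\J_1$ with $\mu_0\prec x$, and by Lemma~\ref{outpaths} a unique infinite path $\omega_x$ in $E_\J$ with $s(\omega_x)=J_{\mu_0}$ whose successive output labels $\LL_\J$ form longer and longer prefixes of $x$; then $\psi(x)=\E(\omega_x)$.

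The heart of the argument is injectivity of $\psi$, and this is exactly where left-synchronization enters. Suppose $\psi(x)=\psi(y)=z$, so $\E(\omega_x)=\E(\omega_y)=z$. Reading a length-$m$ window of this common $\E$-label at any position $j$ yields length-$m$ subpaths of $\omega_x$ and of $\omega_y$ with equal $\E$-value, so by left-synchronization with delay $m$ they have the same source; hence the $j$-th vertex of $\omega_x$ equals the $j$-th vertex of $\omega_y$ for every $j$. Since an edge of a coding graph is determined by its ordered pair of endpoints, equality of the entire vertex sequences forces $\omega_x=\omega_y$. Finally, because $E_\J$ is finite and has no non-positive cycles, every sufficiently long subpath contains a positive edge, so $|\LL_\J(\omega_x|_l)|\to\infty$; thus the nested prefixes $\mu_0\LL_\J(\omega_x|_l)\prec x$ have unbounded length and recover $x$ completely from $\omega_x$. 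Therefore $x=y$, $\psi$ is injective, and the theorem follows.

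I expect the main obstacle to be the second step: justifying rigorously that $\lambda_\J(p_\delta)$ is genuinely a sum of pairwise orthogonal cylinder projections (the absorption of all cross-terms, which relies on the image lying in $\D_{C^*(E)}$ combined with the left-resolving and partition properties) and extracting from it the clean output-reading description of $\psi$. The synchronization hypothesis is tailored precisely to make the final recovery of the vertex sequence work via sliding windows, but the bookkeeping connecting the algebraic formula of Lemma~\ref{imageofpath} to the symbolic $\psi$ is the delicate part. The reduction to $C^*(E)$ via Lemma~\ref{masaisoiff} is what keeps the argument manageable, since compactness of $E^\infty$ is what upgrades ``injective plus surjective'' to ``homeomorphism''.
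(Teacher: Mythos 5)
Your argument is correct, but it takes a genuinely different route from the paper's. The paper proves surjectivity onto $\D_{L_\Zb(E)}$ \emph{constructively}: for each sufficiently long $\mu$ it uses Corollary \ref{diagpart} to fan out from $J_{\mu_p}$, appends all $\E$-labelled continuations of length equal to the synchronization delay, and exhibits an explicit diagonal element $\sum_{i,j}S_{\E(A_i)}S_{\gamma_{ij}}S_{\gamma_{ij}}^*S_{\E(A_i)}^*$ that $\Lambda_\J$ sends to $P_\mu$; everything stays inside the Leavitt path algebra. You instead pass to $C^*(E)$ via Lemma \ref{masaisoiff}, dualize to a continuous surjection $\psi$ of $E^\infty$, and use left-synchronization to prove $\psi$ is injective, with compactness upgrading the continuous bijection to a homeomorphism. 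Both arguments are sound, and the step you flag as delicate is genuinely the one needing care, though it is easier than you suggest: the cross-terms in $\lambda_\J(p_\delta)$ do not merely ``cancel'' --- they vanish identically by the paper's Lemma \ref{lem:rangelabels} ($\LL_r(\omega)^*\LL_r(\xi)=0$ for $\omega\neq\xi$ with the same $\E$-label), and pairwise orthogonality of the surviving cylinder projections is exactly Lemma \ref{lem:nosubpathlabel}; the unique path $\omega_x$ tracking $x$ exists by the partition property of Lemma \ref{outpaths}, and $|\LL_\J(\omega_x|_l)|\to\infty$ because every cycle in the finite graph $E_\J$ contains a positive edge. What your approach buys is conceptual economy and a direct bridge to the dynamical picture: your description of $\psi^{-1}$ is essentially the content of Lemmas \ref{transducetoEJ} and \ref{psiasEJ} in Section \ref{transducers}, which the paper only develops \emph{after} the theorem is established. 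What it gives up is an explicit preimage of each $P_\mu$ and the purely algebraic character of the paper's proof (your route genuinely uses the topology of $E^\infty$ and the reduction of Lemma \ref{masaisoiff}, whereas the paper's computation takes place in $L_\Zb(E)$ itself).
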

\begin{proof}
Assume that graph $(E_\J,\E)$ is left-synchronizing with delay $l$. In order to show that $\Lambda_\J|_{\D_{L_\Zb(E)}}$ is an automorphism of $\D_{L_\Zb(E)}$, we must show that all projections $P_\mu$, $\mu\in E^*$, are in the range of $\Lambda_\J|_{\D_{L_\Zb(E)}}$. 
It suffices to do this for all paths $\mu$ of length exceeding an arbitrary fixed integer, since every projection in $\D_{L_\Zb(E)}$ may be written as a sum of such projections $P_\mu$. 

Now, consider  a path $\mu\in E^*$ that is sufficiently long to have a prefix $\mu_p\in\J_1$, and let $\mu_s\in E^*$ be the unique path such that 
$\mu = \mu_p\mu_s$. By Corollary \ref{diagpart}, the vertex $J_{\mu_p}\in E_\J$ emits paths $\{A_i\}\subseteq E_\J^*$ such that 
$\LL_\J(A_i)=S_{\mu_s\omega_i}$ and $P_{r(\mu)} = \sum_i P_{\omega_i}$. We put $J_i = r_\J(A_i)$. Let $\{\gamma_{ij}\}$ be the 
collection of $\E$-labels of paths in $E_\J^*$ of length $l$ and source $J_i$. Left-synchronization of the graph $(E_\J, \E)$ implies that 
every path with $\E$-label $\gamma_{ij}$ must have $J_i$ as its source.  For each $\gamma_{ij}$ there may be more than one path with 
$\E$-label $\gamma_{ij}$, but then the $\LL_\J$-labels uniquely identified such paths (Lemma \ref{rresolving}). 

Let $R_{ijm}$ be the collection of all $\LL_\J$-labels of all paths with $\E$-label $\gamma_{ij}$. By Lemma \ref{outedges}, the ranges of $R_{ijm}$ 
yield a partition of $P_{r(\omega_i)}$. Let $J_{ijm}\in E_\J^0$ be the range of the unique path with the double $\E$-$\LL_\J$-label $[\gamma_{ij},R_{ijm}]$. 
Note that $J_{ijm}\neq J_{ijm'}$ for $m\neq m'$, since the coding graph ls left-resolving in the $\E$-label (Lemma \ref{lresolving}). Furthermore, since the graph 
$(E_\J,\E)$ is both left-resolving and left-synchronizing, every path with $\E$-label $\E(A_i)\gamma_{ij}$ must have $S_{\mu_s\omega_i}$ as a prefix 
of its $\LL_\J$ label. We denote by $K_{ijm}$ the $\LL_r$-label of the unique path with the double label $[\gamma_{ij},R_{ijm}]$. 
By Lemma \ref{imageofpath}, we now get 
\begin{align*}
\Lambda_\J\left(\sum_{i,j} S_{\E(A_i)}S_{\gamma_{ij}}S_{\gamma_{ij}}^*S_{\E(A_i)}^*\right) & = 
S_{\mu_p}S_{\mu_s}\left(\sum_{i,j,m,m'} S_{\omega_i} R_{ijm} K_{ijm}^* K_{ijm'} R_{ijm'}^*                                 
S_{\omega_i}^*\right)S_{\mu_s}^*S_{\mu_p}^* \\
 & = S_\mu \left(\sum_{i,j,m} S_{\omega_i} R_{ijm}R_{ijm'}^*S_{\omega_i}^*\right)S_\mu^* \\
 & = P_\mu. 
\end{align*}

The following picture illustrates the above process for a single $\gamma_{ij}$. 
\[ \beginpicture
\setcoordinatesystem units <1cm,0.8cm>
\setplotarea x from -4 to 4, y from -2 to 2
\setlinear
\plot -4 0.5  -3 0.5  -3 -0.5  -4 -0.5  -4 0.5 /
\plot -0.5 0.5  0.5 0.5  0.5 -0.5  -0.5 -0.5  -0.5 0.5 /
\plot 3 2  4 2  4 1  3 1  3 2 /  
\plot  3 -1  4 -1  4 -2  3 -2  3 -1 /  
\setquadratic
\plot  -3 0  -2.375 -0.5  -1.75 0 /
\plot  -1.75 0  -1.125 0.5  -0.5 0 /
\plot  0.5 0.5  1.125 1.25  1.75 0.75 /
\plot  1.75 0.75  2.375 0.25  3 1 /
\plot  1.75 -0.75  2.375 -0.25  3 -1 /
\plot  0.5 -0.5  1.125 -1.25  1.75 -0.75 /
\put {$J_{\mu_p}$} at -3.5 0
\put {$J_i$} at 0 0
\put {$J_{ijm}$} at 3.5 1.5
\put {$J_{ijm'}$} at 3.5 -1.5 
\put {$[\E(A_i),S_{\mu_s\omega_i}]$} at -1.75 0.8
\put {$[\gamma_{ij},R_{ijm}]$} at 1.7 1.6
\put {$[\gamma_{ij},R_{ijm'}]$} at 1.7 -1.6
\arrow <0.235cm> [0.2,0.6] from -0.65 0.2 to -0.5 0
\arrow <0.235cm> [0.2,0.6] from 2.9 0.8   to 3 1
\arrow <0.235cm> [0.2,0.6] from  2.9 -0.8  to 3 -1
\endpicture \]
\end{proof}

To prove that the coding graph being left-synchronizing is also a necessary condition we record some more consequences of $E_\J$ being left resolving in the $\E$-label.

\begin{lemma} \label{lem:twocycles}
Let $u_\J\in\SS_E$. 
We assume that the coding graph $E_\J$ has only non-negative edges and does not contain any non-positive cycles. 
If $E_\J$ is not left-synchronizing in the $\E$-label, then there exists two distinct cycles in $E_\J^*$ with the same $\E$-label.
\end{lemma}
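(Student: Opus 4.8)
The plan is to recast the problem in terms of a product graph and then extract a cycle by finiteness. First I would form the fibre product graph $G$ of $E_\J$ with itself over the $\E$-labelling: its vertex set $G^0$ consists of those pairs $(J_\mu, J_{\mu'}) \in E_\J^0 \times E_\J^0$ whose $e$-components coincide, and its edge set $G^1$ consists of those pairs $(\eta, \theta) \in E_\J^1 \times E_\J^1$ for which both $(s_\J(\eta), s_\J(\theta))$ and $(r_\J(\eta), r_\J(\theta))$ lie in $G^0$, with source and range taken coordinatewise. Because equality of the $\E$-labels of two paths is exactly equality of the $e$-component at each visited vertex, a path of length $k$ in $G$ is the same thing as a pair $(\omega_1, \omega_2)$ of length-$k$ paths in $E_\J$ with $\E(\omega_1) = \E(\omega_2)$.

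Next I would translate the hypothesis. Writing $\Delta \subseteq G^0$ for the diagonal vertices $(J_\mu, J_\mu)$ and $\Delta^c$ for its complement, the failure of $(E_\J, \E)$ to be left-synchronizing says precisely that for every $m \in \Nb$ there is a path of length $m$ in $G$ whose source lies in $\Delta^c$: the two projected paths share an $\E$-label but start at distinct vertices.

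The key observation, and the step where the left-resolving property carries the argument, is that $\Delta^c$ is forward invariant in $G$; that is, there is no $G$-edge from a vertex of $\Delta^c$ to a vertex of $\Delta$. Indeed, such an edge would be a pair $(\eta, \theta)$ of edges of $E_\J$ with $\E(\eta) = \E(\theta)$ and $r_\J(\eta) = r_\J(\theta)$ but $s_\J(\eta) \neq s_\J(\theta)$, which is impossible by Lemma \ref{lresolving}. Hence any $G$-path issuing from $\Delta^c$ remains in $\Delta^c$, so the arbitrarily long paths supplied above all lie entirely in the finite subgraph of $G$ supported on $\Delta^c$.

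Finally, a finite graph admitting paths of unbounded length must contain a cycle: any path longer than $|\Delta^c|$ revisits a vertex, and the segment between two such visits is a cycle inside $\Delta^c$. Projecting this $G$-cycle onto its two coordinates produces cycles $c_1, c_2$ in $E_\J$ with $\E(c_1) = \E(c_2)$; since the $G$-cycle is based at some off-diagonal $(J_\mu, J_{\mu'})$ with $J_\mu \neq J_{\mu'}$, the cycles $c_1$ and $c_2$ begin at different vertices and are in particular distinct, as required. I expect the forward invariance of $\Delta^c$ to be the one genuinely load-bearing point; granting it, the passage to a cycle is a routine pigeonhole argument.
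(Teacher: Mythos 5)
Your argument is correct, and it is essentially the standard proof of this statement: the paper itself only cites the argument of $(3)\Rightarrow(2)$ in \cite[Lemma 3.10]{AJSz}, which is the same pair-graph reasoning you give, with Lemma \ref{lresolving} supplying the forward invariance of the off-diagonal set and pigeonhole producing the cycle. Your write-up simply makes explicit the details the paper outsources to that citation.
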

\begin{proof}
As in $(3) \implies (2)$ in the proof of \cite[Lemma 3.10]{AJSz}.
\end{proof}

The following Lemma is just a minor variation on Lemma \ref{lresolving}.

\begin{lemma} \label{lem:rangelabels}
Let $u_\J\in\SS_E$. 
We assume that the coding graph $E_\J$ has only non-negative edges and does not contain any non-positive cycles. 
Suppose that $\omega, \xi \in E_\J^*$ and $\E(\omega) = \E(\xi)$. 
Then $\LL_r(\omega)^* \LL_r(\xi) \neq 0$ if and only if $\omega = \xi$. 
Furthermore, for any path $\gamma \in E_\J^*$ we have
\[
	\LL_\J(\gamma) \LL_r(\gamma)^* \LL_r(\gamma) = \LL_\J(\gamma).
\]
\end{lemma}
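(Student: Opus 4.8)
The reverse implication in the first claim is immediate, so the plan is to establish the forward implication and the displayed identity separately. If $\omega = \xi$ and $r(\omega) = (\mu,e\nu)$, then $\LL_r(\omega)^*\LL_r(\xi) = S_\nu^* S_\nu = P_{r(\nu)} \neq 0$, which gives one direction of the equivalence at once.

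For the forward implication I would write $r(\omega) = (\mu, e\nu)$ and $r(\xi) = (\mu', e'\nu')$, so that $\LL_r(\omega) = S_\nu$ and $\LL_r(\xi) = S_{\nu'}$. Using the standard fact that $S_\nu^* S_{\nu'} \neq 0$ precisely when $\nu$ and $\nu'$ are comparable (one a prefix of the other), the hypothesis $\LL_r(\omega)^* \LL_r(\xi) \neq 0$ forces $\nu$ and $\nu'$ to be comparable. Next I would read off from $\E(\omega) = \E(\xi)$ that the two $\E$-labels have the same terminal letter, and since the last letter of $\E(\omega)$ is exactly the $e$-part of $r(\omega)$, this gives $e = e'$. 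Hence $e\nu$ and $e\nu'$ are comparable elements of $\J_2$. Since $u_\J \in \SS_E$ forces $\sum_{\rho \in \J_2} P_\rho = 1$ (equation (\ref{twopartitions})), the set $\J_2$ is a partition and so contains no proper prefixes; comparability then yields $e\nu = e\nu'$. As in the proof of Lemma \ref{lresolving}, equal second coordinates force the two pairs to coincide (the second coordinate determines its pair because $u_\J$ is unitary), so $r(\omega) = r(\xi)$. At this point I would invoke the path version of left-resolving recorded after Lemma \ref{lresolving}: from $\E(\omega) = \E(\xi)$ and $r(\omega) = r(\xi)$ we conclude $\omega = \xi$.

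For the displayed identity I would write $r(\gamma) = (\mu_k, e_k\nu_k)$, so that $\LL_r(\gamma)^*\LL_r(\gamma) = S_{\nu_k}^* S_{\nu_k} = P_{r(\nu_k)}$, and since every pair in $\J$ has matching ranges, $P_{r(\nu_k)} = P_{r(\mu_k)}$. When $\gamma$ has positive length, its label is the product $\LL_\J(\gamma) = S_{\nu_1}^* S_{\mu_2} \cdots S_{\nu_{k-1}}^* S_{\mu_k}$, which terminates in $S_{\mu_k}$; since $S_{\mu_k} P_{r(\mu_k)} = S_{\mu_k}$, right multiplication by $P_{r(\mu_k)}$ fixes $\LL_\J(\gamma)$, giving $\LL_\J(\gamma) \LL_r(\gamma)^*\LL_r(\gamma) = \LL_\J(\gamma)$. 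The length-zero case, where $\LL_\J(\gamma) = P_{r(\mu_1)}$ by convention, follows immediately from the same range identity.

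The displayed identity is routine bookkeeping with domain projections. The hard part is the forward implication of the first claim, specifically upgrading the conclusion from equal range labels ($\nu = \nu'$) to equal range vertices ($r(\omega) = r(\xi)$). This is where both ingredients are indispensable: one must match the leading edge $e$ through the $\E$-label so that the comparison happens inside the partition $\J_2$, and one must use unitarity of $u_\J$ so that a second coordinate pins down its pair. Once $r(\omega) = r(\xi)$ is in hand, the already-established left-resolving property (extended to paths) closes the argument.
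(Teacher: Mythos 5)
Your proposal is correct and follows essentially the same route as the paper: the forward implication is the same chain (comparability of the $\nu$-parts, matching the $e$-parts via the common $\E$-label, the partition property of $\J_2$ from equation (\ref{twopartitions}) forcing equality, and then the path version of left-resolving), and the displayed identity is the same domain-projection computation, which you carry out directly for paths rather than reducing to edges as the paper does. No gaps.
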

\begin{proof}
Let
\[
	r(\omega) = \vertexbox{e_1}{\mu_1}{\nu_1} \quad \text{ and } \quad
	r(\xi) = \vertexbox{e_2}{\mu_2}{\nu_2} \text{ } .
\]
Since $\E(\omega) = \E(\xi)$, we see that $e_1 = e_2$. 
For simplicity put $e = e_1$. 
We have
\begin{align*}
	\LL_r(\omega)^* \LL_r(\xi) \neq 0 &\iff \nu_1 \prec \nu_2 \text{ or } \nu_2 \prec \nu_1 \\
	&\iff e\nu_1 \prec e\nu_2 \text{ or } e\nu_2 \prec e\nu_1 \\
	&\iff e\nu_1 = e\nu_2 \\
	&\iff r(\omega) = r(\xi) \\
	&\iff \omega = \xi.
\end{align*}
Where the last equivalence follows since $E_\J$ is left-resolving.

It suffices to prove the final claim of the lemma under the assumption that $\gamma$ is an edge. 
So suppose $\gamma$ is an edge with 
\[
	s(\gamma) = \vertexbox{e_1}{\mu_1}{\nu_1} \quad \text{ and } \quad
	r(\gamma) = \vertexbox{e_2}{\mu_2}{\nu_2} \text{ }.
\] 
Then $\LL_r(\gamma) = S_{\nu_2}$ and $\LL_\J(\gamma) = S_{\nu_1}^* S_{\mu_2}$. 
Hence 
\[
	\LL_r(\gamma)^* \LL_r(\gamma) = S_{\nu_2}^* S_{\nu_2} = P_{r(\nu_2)} = P_{r(\mu_2)}.
\]
Therefore 
\[
	\LL_\J(\gamma) \LL_r(\gamma)^* \LL_r(\gamma) = S_{\nu_1}^* S_{\mu_2} P_{r(\mu_2)} = S_{\nu_1}^* S_{\mu_2} = \LL_\J(\gamma). \qedhere
\]
\end{proof}

\begin{theorem} \label{tmm:notdiagiso}
Let $u_\J\in\SS_E$. 
We assume that the coding graph $E_\J$ has only non-negative edges and does not contain any non-positive cycles. 
If $E_\J$ is not left-synchronizing in the $\E$ label, then $\Lambda_\J$ does not restrict to an isomorphism of $\D_{L_\Zb(E)}$.
\end{theorem}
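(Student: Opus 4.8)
The plan is to reduce everything to the $C^*$-algebraic side via Lemma \ref{masaisoiff} and then to exhibit two \emph{distinct} infinite paths that the endomorphism cannot separate. First I would apply Lemma \ref{lem:twocycles} to obtain two distinct cycles $\omega\neq\xi$ in $E_\J^*$ with $\E(\omega)=\E(\xi)$. Since $E_\J$ is left-resolving in the $\E$-label (Lemma \ref{lresolving}), equal ranges would force $\omega=\xi$, so the two cycles are based at distinct vertices $J_a=(\mu_a,e\nu_a)$ and $J_b=(\mu_b,e\nu_b)$; as distinct elements of $\J$ have distinct first coordinates, $\mu_a\neq\mu_b$. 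Because $E_\J$ has no non-positive cycle we have $\deg(\omega)>0$, so $\LL_\J(\omega)=S_\beta$ for a nonempty path $\beta$, and likewise $\LL_\J(\xi)=S_{\beta'}$ with $\beta'$ nonempty. Reading off the $\LL_\J$-labels along the finite prefixes of $\omega^\infty$ (resp. $\xi^\infty$) then produces an increasing chain $\mu_a,\mu_a\beta,\mu_a\beta^2,\dots$ of genuine finite paths, whose limit is an infinite path $x=\mu_a\beta^\infty\in E^\infty$ (resp. $y=\mu_b(\beta')^\infty$).

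Next I would verify $x\neq y$. Both $\mu_a$ and $\mu_b$ lie in $\J_1$, and since $u_\J\in\SS_E$ the cylinder sets $\{\Z(\rho)\mid\rho\in\J_1\}$ are pairwise disjoint (they partition $E^\infty$ by (\ref{twopartitions})). If we had $x=y$, then $\mu_a$ and $\mu_b$ would both be prefixes of the same infinite path, forcing $\mu_a=\mu_b$, a contradiction. Hence $x\neq y$.

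The main step is to show that $x$ and $y$ are identified by $\lambda_\J$ on the diagonal. Working in $C^*(E)$, Gelfand duality identifies $\D_{C^*(E)}$ with $C(E^\infty)$ and the unital endomorphism $\lambda_\J\vert_{\D_{C^*(E)}}$ with composition $g\mapsto g\circ\phi$ for a continuous map $\phi\colon E^\infty\to E^\infty$, and it suffices to prove $\phi(x)=\phi(y)$: any diagonal projection separating $x$ and $y$ then takes distinct values at $x$ and $y$, whereas every element $\lambda_\J(d)=d\circ\phi$ takes equal values there, so such a projection cannot lie in the range of $\lambda_\J\vert_{\D}$. Combining Lemma \ref{imageofpath} with the orthogonality relations of Lemma \ref{lem:rangelabels} gives $\lambda_\J(p_\alpha)=\sum_{\E(\Omega)=\alpha}p_{\rho_\Omega}$, where $\rho_\Omega$ is the path with $S_{\rho_\Omega}=\LL_s(\Omega)\LL_\J(\Omega)$; thus $\phi(x)\in\Z(\alpha)$ if and only if some $\Omega\in E_\J^*$ has $\E(\Omega)=\alpha$ and $\rho_\Omega\prec x$. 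For such an $\Omega$, the prefix $\LL_s(\Omega)=S_{\mu'}\prec x$ forces $\mu'=\mu_a$ by disjointness of the $\J_1$-cylinders, so $\Omega$ starts at $J_a$; and since the out-edge labels at each vertex are prefix-free (they partition $r(\mu)$ by Lemma \ref{outpaths}) and $E_\J$ is right-resolving (Corollary \ref{rresolving}), the reading of $\beta^\infty$ from $J_a$ is deterministic, whence $\Omega$ is an initial segment of $\omega^\infty$ and $\alpha=\E(\Omega)\prec\E(\omega^\infty)$. The identical analysis at $J_b$ shows $\phi(y)\in\Z(\alpha)$ if and only if $\alpha\prec\E(\xi^\infty)$. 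As $\E(\omega)=\E(\xi)$ forces the two periodic readings $\E(\omega^\infty)$ and $\E(\xi^\infty)$ to coincide, $\phi(x)$ and $\phi(y)$ meet exactly the same cylinders, so $\phi(x)=\phi(y)$.

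Finally, choosing $\gamma=\mu_a\beta^n$ with $n$ large enough that $y\notin\Z(\gamma)$ yields a diagonal projection $p_\gamma$ with $p_\gamma(x)\neq p_\gamma(y)$, which therefore cannot equal $\lambda_\J(d)$ for any $d\in\D_{C^*(E)}$; hence $\lambda_\J$ does not restrict to an automorphism of $\D_{C^*(E)}$, and by Lemma \ref{masaisoiff} neither does $\Lambda_\J$ on $\D_{L_\Zb(E)}$. I expect the third step to be the crux: one must make the ``deterministic reading'' rigorous, proving that any path whose $\LL_s\LL_\J$-label is a prefix of $x$ must be an initial segment of $\omega^\infty$. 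This is exactly where right-resolvingness must be combined with the prefix-freeness of out-edge labels (Lemma \ref{outpaths}), by induction on length; the subtlety is that the labels $S_\alpha$ are multi-letter, so it is prefix-freeness, not mere distinctness of labels, that guarantees uniqueness of the reading.
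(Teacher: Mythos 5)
Your argument is correct, but it closes the proof by a genuinely different mechanism than the paper does. The two proofs share their setup: Lemma \ref{lem:twocycles} supplies two distinct cycles with equal $\E$-label based at distinct vertices $J_{\mu_a}$, $J_{\mu_b}$, and both rest on the identity $\lambda_\J(p_\alpha)=\sum_{\E(\Omega)=\alpha}p_{\rho_\Omega}$ coming from Lemmas \ref{imageofpath} and \ref{lem:rangelabels} (this is exactly the paper's Equation (\ref{eq:pmu})). From there the paper stays inside $L_\Zb(E)$ and argues by contradiction: assuming $P_{\mu_a}=\Lambda_\J\bigl(\sum_i P_{\alpha_i}\bigr)$ with all $|\alpha_i|=l$, cutting down by $P_{\mu_a}$ and comparing with Lemma \ref{outpaths} forces some $\alpha_k$ to be a prefix of the periodic $\E$-reading of the first cycle, while cutting down by $P_{\mu_b}$ yields $0$ --- impossible, since the second cycle supplies a path from $J_{\mu_b}$ with $\E$-label $\alpha_k$ and a sum of mutually orthogonal nonzero projections over $\Zb$ cannot cancel. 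You instead dualize: passing to $C^*(E)$, you let $\phi$ be the Gelfand dual of $\lambda_\J|_{\D_{C^*(E)}}$ and exhibit the explicit distinct points $x=\mu_a\beta^\infty$ and $y=\mu_b(\beta')^\infty$ with $\phi(x)=\phi(y)$, so that $p_{\mu_a}$, which separates them, cannot lie in the range; Lemma \ref{masaisoiff} transfers the failure back to $L_\Zb(E)$. The step you rightly flag as the crux --- that any $\Omega$ with $s(\Omega)=J_{\mu_a}$ whose label $\LL_s(\Omega)\LL_\J(\Omega)$ is a prefix of $x$ must be an initial segment of $\omega^\infty$ --- is precisely the content of the paper's Lemma \ref{lem:nosubpathlabel}: compare $\Omega$ with the initial segment of $\omega^\infty$ of the same length, whose label is also a prefix of $x$, and apply that lemma rather than redoing the prefix-freeness induction. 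What your route buys is a concrete dynamical picture of the obstruction (the actual pair of infinite paths collapsed by $\phi$), which dovetails with the transducer description of $\psi_u=(\lambda_u^*)^{-1}$ in Section \ref{transducers}; what the paper's route buys is that it never leaves the algebraic setting over $\Zb$ and needs no Gelfand duality.
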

\begin{proof}
By Lemma \ref{lem:twocycles} there exists two distinct cycles $\beta, \beta' \in E_\J^*$ with $\E(\beta) = \E(\beta')$. 
Since $E_\J$ is left-resolving in the $\E$-label (Lemma \ref{lresolving}) $\beta$ and $\beta'$ have distinct ranges. 
I.e. there exists distinct vertices $J_\mu, J_{\mu'} \in E_\J^0$ such that $s(\beta) = r(\beta) = J_\mu$ and $s(\beta') = r(\beta') = J_{\mu'}$. 
We claim that $P_\mu$ is not in the image of $\Lambda_\J$.

Suppose, for contradiction, that $P_\mu$ is in the image of $\Lambda_\J$. 
Since $\Lambda_\J$ maps $\D_{L_\Zb(E)}$ into $\D_{L_\Zb(E)}$, this means there exists a finite set of paths $\{\alpha_i\}$ such that
\[
	P_\mu = \Lambda_\J \left( \sum_i S_{\alpha_i} S_{\alpha_i}^* \right).
\]
Since $E$ has no sinks or sources, we can assume that all the $\alpha_i$ have the same length, $l$ say.
Using lemmas \ref{imageofpath} and \ref{lem:rangelabels} we compute.
\begin{align}
\begin{split} \label{eq:pmu}
	P_\mu &= \Lambda_\J \left( \sum_i S_{\alpha_i} S_{\alpha_i}^* \right) = 
	\sum_i \Lambda_\J( S_{\alpha_i}) \Lambda_\J(S_{\alpha_i}^*) \\
	&= \sum_i \left( \sum_{\E(\omega) = \alpha_i} \LL_s(\omega) \LL_\J(\omega) \LL_r(\omega)^* \right) \left( \sum_{\E(\xi) = \alpha_i} \LL_r(\xi) \LL_\J(\xi)^* \LL_s(\xi)^* \right) \\
	&= \sum_i \left( \sum_{\E(\omega) = \alpha_i = \E(\xi)} \LL_s(\omega) \LL_\J(\omega) \LL_r(\omega)^* \LL_r(\xi) \LL_\J(\xi)^* \LL_s(\xi)^* \right) \\
	&= \sum_i \left( \sum_{\E(\omega) = \alpha_i} \LL_s(\omega) \LL_\J(\omega) \LL_\J(\omega)^* \LL_s(\omega)^* \right)
\end{split}
\end{align}
We will use this equation to first establish that there exists some $k$ such that $\alpha_k \prec \E(\beta)^\infty$.
Then we will leverage that to get a contradiction. 

Multiplying Equation (\ref{eq:pmu}) by $P_\mu$ we then get (as in Lemma \ref{imageofprojpath}) the following
\begin{align*}
	P_\mu &= P_\mu^2 \\
	&= \sum_i \left( \sum_{\E(\omega) = \alpha_i} P_\mu \LL_s(\omega) \LL_\J(\omega) \LL_\J(\omega)^* \LL_s(\omega)^* \right) \\
	&= \sum_i \left( \sum_{\substack{\E(\omega) = \alpha_i \\ s(\omega) = J_\mu}} \LL_s(\omega) \LL_\J(\omega) \LL_\J(\omega)^* \LL_s(\omega)^* \right) \\
	&= S_{\mu} \left( \sum_i \left( \sum_{\substack{\E(\omega) = \alpha_i \\ s(\omega) = J_\mu}} \LL_\J(\omega) \LL_\J(\omega)^* \right) \right) S_{\mu}^* \\
	&= S_{\mu} \left( \sum_{\substack{\E(\omega) \in \{\alpha_i\} \\ s(\omega) = J_\mu}} \LL_\J(\omega) \LL_\J(\omega)^* \right) S_{\mu}^* 
\end{align*}
Therefore $\setof{\LL_\J(\omega)}{s(\omega) = J_\mu \text{ and } \E(\omega) = \alpha_i \text{ for some } i}$ is a partition of $P_{\mu}$.
Comparing with Lemma \ref{outpaths} this tells us that any path $\omega \in E_\J^l$ with $s(\omega) = J_\mu$ must have $\E(\omega) = \alpha_i$ for some  $i$. 
In particular there exists a $k$ such that $\alpha_k \prec \E(\beta)^\infty$.

Multiplying equation (\ref{eq:pmu}) by $P_{\mu'}$ and computing as above we get
\begin{align*}
	0 &= P_{\mu'} P_{\mu} \\
	&= \sum_i \left( \sum_{\substack{\E(\omega) = \alpha_i \\ s(\omega) = J_\mu}} \LL_s(\omega) \LL_\J(\omega) \LL_\J(\omega)^* \LL_s(\omega)^* \right)
\end{align*}
Since we are working in a Leavitt path algebra over $\Zb$ there is no cancellation in this sum, so it can only be $0$ if there does not exist a path $\omega$ with $s(\omega) = J_{\mu'}$ and $\E(\omega) = \alpha_i$.
However, as we saw above there exists a $k$ such that $\alpha_k \prec \E(\beta)^\infty = \E(\beta')^\infty$, so there exists a path $\xi \prec (\beta')^\infty$ such that $\E(\xi) = \alpha_k$ and $s(\xi) = J_{\mu'}$.
\end{proof}

We summarize our results and use Lemma \ref{masaisoiff} to translate them into the language of $C^*$-algebras. 

\begin{corollary} \label{cor:diagonalauto}
Let $u_\J\in\SS_E$. 
\begin{enumerate}
	\item If the coding graph $E_\J$ contains a non-positive cycle then the endomorphism $\Lambda_\J$ does not restrict to an automorphism of the diagonal.
	\item Suppose the coding graph $E_\J$ has only non-negative edges and does not contain any non-positive cycles. 
	The labeled graph $(E_\J,\E)$ is left-synchronizing if and only if the endomorphism $\Lambda_\J \colon L_{\Zb}(E) \to L_{\Zb}(E)$ restricts to automorphism of the MASA $\D_{L_{\Zb}(E)}$.    
	\item If the coding graph $E_\J$ contains a non-positive cycle then the endomorphism $\lambda_\J$ does not restrict to an automorphism of the diagonal.
	\item Suppose the coding graph $E_\J$ has only non-negative edges and does not contain any non-positive cycles. 
	The labeled graph $(E_\J,\E)$ is left-synchronizing if and only if the endomorphism $\lambda_\J \colon C^*(E) \to C^*(E)$ restricts to automorphism of the MASA $\D_{C^*(E)}$.    
\end{enumerate}
\end{corollary}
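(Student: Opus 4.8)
The plan is to assemble this corollary entirely from results already established in the section, so that the only genuine point requiring care is the reduction of ``restricts to an automorphism of the MASA'' to the bare surjectivity statement in terms of which Lemma \ref{masaisoiff} is phrased.

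First I would record the standing bijectivity bookkeeping. Since every cycle in $E$ has an exit, the Cuntz--Krieger uniqueness theorem makes both $\Lambda_\J$ and $\lambda_\J$ injective, and since $u_\J$ normalizes the diagonal we always have $\Lambda_\J(\D_{L_\Zb(E)}) \subseteq \D_{L_\Zb(E)}$ and $\lambda_\J(\D_{C^*(E)}) \subseteq \D_{C^*(E)}$. Hence $\Lambda_\J|_{\D_{L_\Zb(E)}}$ is automatically an injective endomorphism of $\D_{L_\Zb(E)}$, so it is an automorphism precisely when it is surjective, i.e. when $\Lambda_\J(\D_{L_\Zb(E)}) = \D_{L_\Zb(E)}$; the same holds for $\lambda_\J$ on $\D_{C^*(E)}$. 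This observation is exactly what lets the results of the section be fed into Lemma \ref{masaisoiff}, which speaks only of surjectivity.

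With this in hand, part (1) is precisely Lemma \ref{starcyclesmasa}. For part (2), under the standing hypotheses that $E_\J$ has only non-negative edges and no non-positive cycles, the forward implication---left-synchronizing implies that $\Lambda_\J|_{\D_{L_\Zb(E)}}$ is an automorphism---is Theorem \ref{diagonalauto}, while the reverse implication is the contrapositive of Theorem \ref{tmm:notdiagiso}: if $(E_\J,\E)$ fails to be left-synchronizing, then some $P_\mu$ lies outside the image of $\Lambda_\J$, so $\Lambda_\J|_{\D_{L_\Zb(E)}}$ is not surjective and hence, by the reformulation above, not an automorphism.

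Finally I would transport everything to the $C^*$-setting through Lemma \ref{masaisoiff}, which asserts that $\Lambda_\J|_{\D_{L_\Zb(E)}}$ is onto $\D_{L_\Zb(E)}$ if and only if $\lambda_\J|_{\D_{C^*(E)}}$ is onto $\D_{C^*(E)}$. Combining this equivalence with the surjectivity reformulation, part (3) follows from part (1) and part (4) follows from part (2), replacing throughout the phrase ``$\Lambda_\J|_{\D_{L_\Zb(E)}}$ is an automorphism'' by ``$\lambda_\J|_{\D_{C^*(E)}}$ is an automorphism''. I expect no real obstacle, since all the substantive work has already been done; the only things to check are the passage from ``automorphism'' to ``surjective'' via injectivity, and that the sole hypothesis of Lemma \ref{masaisoiff} (namely $u_\J \in \SS_E \subseteq \U_E$) is met, which it is.
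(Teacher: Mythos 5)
Your proposal is correct and matches the paper's (implicit) argument exactly: the paper simply states that the corollary summarizes Lemma \ref{starcyclesmasa}, Theorem \ref{diagonalauto}, and Theorem \ref{tmm:notdiagiso}, and translates via Lemma \ref{masaisoiff}. Your additional care in reducing ``automorphism of the MASA'' to surjectivity via the automatic injectivity from the Cuntz--Krieger uniqueness theorem is a worthwhile explicit check that the paper leaves unstated.
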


We end this section by taking a small step towards the goal of establishing when $\Lambda_u$ is onto, otherwise not discussed in the present paper. 
Consider a path $\alpha \in E^*$, by Lemma \ref{imageofpath} we can compute $\Lambda_u(S_\alpha)$ as
\[
	\Lambda_u(S_\alpha) = \sum_{\E(\omega) = \alpha} \LL_s(\omega) \LL_\J(\omega) \LL_r(\omega)^*.
\]
The purpose of the next three lemmas is to show that if $\Lambda_u$ restricts to in isomorphism of the diagonal and the coding graph has no non-negative edges, then in fact each summand is in the image of $\Lambda_u$. 

We record the following lemma mostly to fix notation. 

\begin{lemma}
Let $u_\J \in \SS_E$ be written 
\[
	u_\J = \sum_{i=1}^n S_{\mu_i} S_{\nu_i}^*.
\]
Assume that the coding graph $E_\J$ has only non-negative edges and does not contain any non-positive cycles. 
For each path $\omega \in E_\J^*$ there is a $k$ and a path $\gamma \in E^*$ such that
\[
	\LL_s(\omega) = S_{\mu_k}, \quad \text{ and } \quad \LL_\J(\omega) = s_\gamma. 
\]
\end{lemma}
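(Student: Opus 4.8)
The plan is to treat the two assertions separately: the identity $\LL_s(\omega) = S_{\mu_k}$ is purely a matter of unwinding Definition \ref{codinggraph}, while $\LL_\J(\omega) = S_\gamma$ follows from a short telescoping argument that exploits the non-negativity hypothesis. Note that the assumption excluding non-positive cycles plays no role here; only the hypothesis that every edge of $E_\J$ is non-negative is used.

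For the first assertion, recall that the vertex set of $E_\J$ is exactly $\J$. Hence the source $s_\J(\omega)$ of any path $\omega \in E_\J^*$ is one of the pairs listed in $u_\J = \sum_{i=1}^n S_{\mu_i} S_{\nu_i}^*$; that is, $s_\J(\omega) = (\mu_k, \nu_k)$ for a unique index $k$. Since $\LL_s$ is defined to return the first coordinate of the source vertex, this gives $\LL_s(\omega) = S_{\mu_k}$ with no use of the hypotheses on edges.

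For the second assertion, the first step is to record the shape of a single edge label. If $\eta = [(\sigma,f\tau),(\sigma',f'\tau')]$ is an edge of $E_\J$, then $\LL_\J(\eta) = S_\tau^* S_{\sigma'} \neq 0$, and non-negativity of $\eta$ (via the definition of $\deg$) forces $\tau$ to be a prefix of $\sigma'$, say $\sigma' = \tau\beta$ with $\beta \in E^*$. The Leavitt relations then give $S_\tau^* S_{\tau\beta} = P_{r(\tau)} S_\beta = S_\beta$, so every edge label has the form $S_\beta$, with $\beta$ a vertex exactly when $\deg(\eta) = 0$.

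It then remains to check that the product of these per-edge labels along $\omega$ collapses to a single $S_\gamma$. Writing $\omega$ as $(\sigma_1,f_1\tau_1) \Rightarrow \cdots \Rightarrow (\sigma_k,f_k\tau_k)$ and $\sigma_{j+1} = \tau_j\beta_j$ as above, we have $\LL_\J(\omega) = S_{\beta_1}\cdots S_{\beta_{k-1}}$, and the only thing to verify is that consecutive $\beta_j$ concatenate into a genuine path. This is where the structure of $\J$ enters: since $u_\J \in \SS_E$, each pair $(\mu,\nu) \in \J$ satisfies $r(\mu) = r(\nu)$, so $r(\beta_j) = r(\sigma_{j+1}) = r(\tau_{j+1}) = s(\beta_{j+1})$, while the degree-$0$ factors (those with $\beta_j$ empty) drop out harmlessly as they are precisely the vertex projections at the matching vertices. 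Thus $\gamma = \beta_1\cdots\beta_{k-1}$ is a well-defined path and $\LL_\J(\omega) = S_\gamma$. I do not expect any real obstacle: the substance is the observation that non-negativity forces each label to be a forward product $S_\beta$, and the compatibility $r(\mu) = r(\nu)$ on $\J$ guarantees composability; the only point needing care is the bookkeeping for the degree-$0$ edges.
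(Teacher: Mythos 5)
Your proof is correct. The paper states this lemma without any proof at all (it is introduced with ``we record the following lemma mostly to fix notation''), and your argument --- $\LL_s(\omega)=S_{\mu_k}$ by definition of $\LL_s$ and of $E_\J^0=\J$, each non-negative edge label collapsing to $S_{\beta_j}$ via $S_{\tau}^*S_{\tau\beta}=S_\beta$, and composability of consecutive $\beta_j$ from the condition $r(\mu)=r(\nu)$ on pairs in $\J$ --- is exactly the routine verification the authors evidently had in mind, including your correct observation that the no-non-positive-cycles hypothesis is not actually used.
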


\begin{lemma} \label{lem:nosubpathlabel}
Let $u_\J \in \SS_E$ be written 
\[
	u_\J = \sum_{i=1}^n S_{\mu_i} S_{\nu_i}^*.
\]
Assume that the coding graph $E_\J$ has only non-negative edges and does not contain any non-positive cycles. 

Let $\omega, \xi \in E_\J^*$ be paths of the same length, and write
\begin{align*}
	\LL_s(\omega) \LL_\J(\omega) &= S_{\mu_k} S_{\gamma}, \\
	\LL_s(\xi) \LL_\J(\xi) &= S_{\mu_l} S_{\delta},
\end{align*}
for $l,k \in \{1,2, \ldots, n\}$ and paths $\gamma, \delta \in E^*$. 
If $\mu_k \gamma \prec \mu_l \delta$ then $\omega = \xi$
\end{lemma}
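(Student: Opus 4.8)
The plan is to induct on the common length $p$ of $\omega$ and $\xi$, peeling off one edge at a time from the front. The engine driving every step is a single elementary observation about partitions: by the cylinder-set description of partitions, distinct members of a partition are prefix-incomparable, so whenever two members $\rho,\rho'$ of one partition satisfy $\rho \prec \rho'$ we must in fact have $\rho = \rho'$. I will invoke this twice at each level of the induction: once for the global partition $\J_1$, and once for the partition formed by the out-edge labels at a single vertex. Throughout, I use the preceding lemma to write $\LL_s(\omega)\LL_\J(\omega) = S_{\mu_k}S_\gamma = S_{\mu_k\gamma}$ and $\LL_s(\xi)\LL_\J(\xi) = S_{\mu_l}S_\delta = S_{\mu_l\delta}$ as genuine concatenations of paths in $E^*$, with $\mu_k,\mu_l\in\J_1$; this is exactly the point where non-negativity of all edges enters, guaranteeing each $\LL_\J$-factor is of the form $S_\alpha$.

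First I would show the two paths have the same source. The source vertices are $s(\omega) = J_{\mu_k}$ and $s(\xi) = J_{\mu_l}$. Since $\mu_k\gamma \prec \mu_l\delta$, comparing the two concatenated paths over their first $\min(|\mu_k|,|\mu_l|)$ edges shows that $\mu_k$ and $\mu_l$ are prefix-comparable; as they both lie in the partition $\J_1$ (recall $\sum_{\mu\in\J_1}P_\mu = 1$ from (\ref{twopartitions})), the observation above forces $\mu_k = \mu_l$, and hence $s(\omega) = J_{\mu_k} = J_{\mu_l} = s(\xi)$. This already disposes of the base case $p=0$, where $\omega$ and $\xi$ are simply the vertices $s(\omega)=s(\xi)$.

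For the inductive step $p\geq 1$, I would write $\omega = \omega_1\omega''$ and $\xi = \xi_1\xi''$, where $\omega_1,\xi_1$ are the first edges, both emitted by the common vertex $J_{\mu_k}$. Cancelling the common prefix $\mu_k=\mu_l$ in $\mu_k\gamma \prec \mu_l\delta$ yields $\gamma\prec\delta$; writing $\LL_\J(\omega_1)=S_a$ and $\LL_\J(\xi_1)=S_b$ this becomes $a\gamma'' \prec b\delta''$, where $\gamma'',\delta''$ are the $\LL_\J$-paths of $\omega'',\xi''$. As before, $a$ and $b$ are prefix-comparable; by Lemma \ref{outpaths} the out-edge labels at $J_{\mu_k}$ form a partition of $r(\mu_k)$, so $a=b$, and since $(E_\J,\LL_\J)$ is right-resolving (Corollary \ref{rresolving}) this upgrades to $\omega_1 = \xi_1$. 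Then $\omega''$ and $\xi''$ start at the common vertex $r(\omega_1)$, have length $p-1$, and satisfy $\LL_s(\omega'')\LL_\J(\omega'') = S_{\mu'\gamma''} \prec S_{\mu'\delta''} = \LL_s(\xi'')\LL_\J(\xi'')$, where $\mu'\in\J_1$ is the top label of $r(\omega_1)$. The inductive hypothesis gives $\omega''=\xi''$, whence $\omega=\xi$.

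The argument is essentially bookkeeping once the partition observation is isolated. The step I expect to need the most care is keeping the products $\LL_s\LL_\J = S_{\mu\gamma}$ honest as concatenations of paths and correctly applying cancellation of prefixes across concatenation; in particular I would treat separately the degenerate case in which an out-edge label is a vertex projection (an edge of degree $0$). There Lemma \ref{outedges}(i) guarantees that $J_{\mu_k}$ emits no other edge, so the out-edge partition is the trivial one $\{P_{r(\mu_k)}\}$ and $\omega_1=\xi_1$ holds automatically, keeping the induction valid.
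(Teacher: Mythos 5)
Your proof is correct and follows essentially the same route as the paper's: induction on the common length, using prefix-incomparability of the members of the partition $\J_1$ (equation (\ref{twopartitions})) to identify the sources, prefix-incomparability of the out-edge labels together with right-resolving to identify the first edges, and then recursing on the tails. The only cosmetic differences are that you cite Lemma \ref{outpaths} with $l=1$ where the paper invokes Lemma \ref{outedges} directly, and that you separate out the degree-$0$ out-edge case explicitly, which the paper folds into the ``single out-edge'' alternative of Lemma \ref{outedges}.
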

\begin{proof}
We prove the claim by induction on the length $m = |\omega| = |\gamma|$.
If $m = 0$ then $\LL_\J(\omega),\LL_\J(\xi)$ are vertex projections, so $\gamma$ and $\delta$ are both vertices. 
Hence our assumption becomes that $\mu_k \prec \mu_l$.
By equation (\ref{twopartitions}) this is only possible if $k = l$, so we get
\[
	\omega = J_{\mu_k} = J_{\mu_l}= \xi.  
\]

Suppose now $m > 0$ and that the claim is proved for pairs of paths of length less than $m$.
Since $E_\J$ only has non-negative edges we can find paths $\gamma_i, \delta_i \in E^*$, $i = 1, 2, \ldots, m$ such that $\LL_\J(\omega_i) = S_{\gamma_i}$ and $\LL_\J(\xi_i) = S_{\delta_i}$. 
Note that $\gamma = \gamma_1 \gamma_2 \cdots \gamma_m$ and $\delta = \delta_1 \delta_2 \cdots \delta_m$. 
By assumption $\mu_k \gamma \prec \mu_l \delta$, appealing again to equation (\ref{twopartitions}) we see that this is only possible if $k = l$ and $\gamma \prec \delta$. 
Consequently $s(\omega) = J_{\mu_k} = s(\xi)$. 
From Lemma \ref{outedges} we get that either $J_\mu$ emits only a single edge, or the $\LL_\J$ labels of the edges out of $J_\mu$ form a partition of $r(\mu)$. 
In the former case we clearly have $\omega_1 = \xi_1$.
In the latter case we denote the labels $\{S_{\alpha_j}\}_j$ and note that $\gamma_1 = \alpha_a$ and $\delta_1 = \alpha_b$ for some $a,b$.  
Since $\alpha_j \not \prec \alpha_i$ for any $i \neq j$ and $\gamma \prec \delta$ we must have that $a=b$. 
Therefore $\gamma_1 = \delta_1$ and both $\omega_1$ and $\xi_1$ are equal to the unique edge out of $J_\mu$ with $\LL_\J$ label $S_{\gamma_1}$. 
We have now established that we always have $\omega_1 = \xi_1$.
Thus the paths $\omega' = \omega_2 \omega_3 \cdots \omega_m$ and $\xi' = \xi_2 \xi_3 \cdots \xi_m$ have the same source, say $J_{\mu_h}$, so 
\begin{align*}
	\LL_s(\omega') \LL_\J(\omega') &= S_{\mu_h} S_{\gamma_2 \gamma_3 \cdots \gamma_m}, \\
	\LL_s(\xi') \LL_\J(\xi') &= S_{\mu_h} S_{\xi_2 \xi_3 \cdots \xi_m},	
\end{align*}
and $\mu_h \gamma_2 \gamma_3 \cdots \gamma_m \prec \mu_h \xi_2 \xi_3 \cdots \xi_m$. 
By the induction hypothesis we conclude that $\omega' = \xi'$. 
Therefore
\[
	\omega = \omega_1 \omega' = \xi_1 \xi' = \xi. \qedhere
\]
\end{proof}

\begin{lemma}
Let $u_\J\in\SS_E$. 
Assume that the coding graph $E_\J$ has only non-negative edges and does not contain any non-positive cycles. 
Assume further that $\Lambda_u$ restricts to an isomorphism of $\D_{L_\Zb(E)}$.
For all $\xi \in \E_\J^*$ we have 
\[
	\LL_s(\xi) \LL_\J(\xi) \LL_r(\xi)^* \in \image(\Lambda_u).
\]
\end{lemma}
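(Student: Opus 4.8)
The plan is to realize the single summand $\LL_s(\xi)\LL_\J(\xi)\LL_r(\xi)^*$ as a product of two elements that are \emph{already} known to lie in $\image(\Lambda_u)$. Since $\Lambda_u$ is a $*$-homomorphism, $\image(\Lambda_u)$ is a $*$-subalgebra of $L_\Zb(E)$ and hence closed under products. The two factors will be a whole value $\Lambda_u(S_\alpha)$ and a diagonal range projection. This is exactly where the hypothesis that $\Lambda_u$ restricts to an automorphism of $\D_{L_\Zb(E)}$ is used: it guarantees that every projection $P_\beta$, $\beta\in E^*$, lies in $\image(\Lambda_u)$.

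Concretely, given $\xi\in E_\J^*$ I would set $\alpha=\E(\xi)$, so that by Lemma \ref{imageofpath} the term attached to $\xi$ literally appears in
\[
\Lambda_u(S_\alpha)=\sum_{\E(\omega)=\alpha}\LL_s(\omega)\LL_\J(\omega)\LL_r(\omega)^*.
\]
Because $\E$ increases length by one, every path $\omega$ occurring in this sum has the same length as $\xi$; the notation-fixing lemma just above then lets me write $\LL_s(\omega)\LL_\J(\omega)=S_{\mu_l\delta}$ for each such $\omega$, and in particular $\LL_s(\xi)\LL_\J(\xi)=S_{\mu_k\gamma}$. The goal is then to isolate the $\xi$-term by multiplying on the left by the diagonal projection $P_{\mu_k\gamma}$.

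The key step is the mutual orthogonality of the leading range projections of distinct summands. For any $\omega\neq\xi$ occurring in the sum, Lemma \ref{lem:nosubpathlabel} forbids either of $\mu_k\gamma$ and $\mu_l\delta$ from being a prefix of the other; hence $S_{\mu_k\gamma}^*S_{\mu_l\delta}=0$, and therefore $P_{\mu_k\gamma}\LL_s(\omega)\LL_\J(\omega)=0$. Consequently $P_{\mu_k\gamma}\Lambda_u(S_\alpha)$ collapses to the single surviving term $S_{\mu_k\gamma}\LL_r(\xi)^*=\LL_s(\xi)\LL_\J(\xi)\LL_r(\xi)^*$, after checking the routine identity $P_{\mu_k\gamma}S_{\mu_k\gamma}=S_{\mu_k\gamma}$. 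Since $P_{\mu_k\gamma}\in\image(\Lambda_u)$ and $\Lambda_u(S_\alpha)\in\image(\Lambda_u)$, their product lies in the image, which is exactly the claim.

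The only substantive ingredient is Lemma \ref{lem:nosubpathlabel}, whose force is precisely to convert ``distinct paths sharing the same $\E$-label'' into ``prefix-incomparable source labels''; this is where the structural hypotheses (only non-negative edges, no non-positive cycles) do their work, and it is the step I expect to lean on most heavily. Everything else is routine partial-isometry bookkeeping. It is worth noting that the automorphism-of-the-diagonal hypothesis plays no role in the isolation argument itself; it enters only to certify that $P_{\mu_k\gamma}\in\image(\Lambda_u)$.
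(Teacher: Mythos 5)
Your proposal is correct and follows essentially the same route as the paper's own proof: both isolate the $\xi$-summand of $\Lambda_u(S_{\E(\xi)})$ by left-multiplying with the range projection of $\LL_s(\xi)\LL_\J(\xi)$, using Lemma \ref{lem:nosubpathlabel} to show that the labels of distinct paths with the same $\E$-label are prefix-incomparable and hence have orthogonal range projections, and then invoking the diagonal-automorphism hypothesis to place that projection in $\image(\Lambda_u)$. No gaps; your closing remark about where the diagonal hypothesis enters is exactly right.
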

\begin{proof}

Let $\omega_1, \omega_2, \ldots, \omega_m$ be an enumeration of the paths in $E_\J$ with $\E$ label $\E(\xi)$. 
From Lemma \ref{imageofpath} we get that 
\[
	\Lambda_u(S_{\E(\xi)}) = \sum_{i=1}^m \LL_s(\omega_i) \LL_\J(\omega_i) \LL_r(\omega_i)^*
\]
Since $E_\J$ only has non-negative edges we can find paths $\gamma_1, \gamma_2, \ldots, \gamma_m$ such that 
\[
	\LL_s(\omega_i) \LL_\J(\omega_i) = S_{\gamma_i}.
\]
Since paths with the same $\E$ label have the same length it follows from Lemma \ref{lem:nosubpathlabel} that for no pair $i,j$, $i \neq j$, is $\gamma_i \prec \gamma_j$.
Therefore 
\[
	P_{\gamma_i} S_{\gamma_j} = \begin{cases} S_{\gamma_j}, & i = j \\ 0, & i \neq j \end{cases} 
\] 
Suppose $\xi = \omega_k$, then 
\begin{align*}
	\LL_s(\xi) \LL_\J(\xi) \LL_r(\xi)^* &= P_{\gamma_k} \LL_s(\omega_k) \LL_\J(\omega_k) \LL_r(\omega_k)^* \\
	&= \sum_{i=1}^m P_{\gamma_k} \LL_s(\omega_i) \LL_\J(\omega_i) \LL_r(\omega_i)^* \\
	&= P_{\gamma_k} \sum_{i=1}^m \LL_s(\omega_i) \LL_\J(\omega_i) \LL_r(\omega_i)^* \\
	&= P_{\gamma_k} \Lambda_u(S_{\E(\xi)}) \in \image(\Lambda_u) \qedhere
\end{align*}
\end{proof}


\section{Action on the path space and transducers}\label{transducers}

Throughout this section, we fix a unitary $u_\J \in \SS_E$ such that the coding graph $E_\J$ only has 
non-negative edges and does not contain any non-positive cycles. 
Furthermore, the coding graph $E_\J$ should be left-synchronizing (see Definition \ref{leftsync}). 
Under these assumptions it follows from Corollary \ref{cor:diagonalauto} that $\lambda_u$ restricts to an automorphism of $\D_{C^*(E)}$.
Since $\D_{C^*(E)} \cong C(E^\infty)$, Gelfand duality implies that $\lambda_u$ induces a self homeomorphism on $E^\infty$, denoted 
$\lambda_u^*$. Here $E^\infty$ denotes the space of one-sided infinite paths on $E$. 
The purpose of this section is to use the coding graph $E_\J$ to describe the action of $(\lambda_u^*)^{-1}$.
(Why the inverse of $\lambda_u^*$? Because Gelfand duality is contravariant).
To ease notation, we put $\psi_u = (\lambda_u^*)^{-1}$.

To get started,  we first give a description of the action $\psi_u$ and recall the definition of a transducer from \cite{bb, russian}.  
The following Lemma \ref{lem:whatpsidoes} follows immediately from the construction of the isomorphisms $\D_{C^*(E)} \cong C(E^\infty)$, so that  
a path $\alpha\in E^\infty$ is identified with the character of $\D_{C^*(E)}$ which maps a projection $P_\mu$ to $1$ or $0$ depending on if 
$\mu$ is an initial word of $\alpha$ or not. 

\begin{lemma} \label{lem:whatpsidoes}
Suppose $\alpha, \beta \in E^\infty$. 
The following are equivalent:
\begin{enumerate}
	\item $\psi_u(\alpha) = \beta$.
	\item For all $n \in \Nb$ there exists $m \in \Nb$ such that 
	\[
		p_{\beta_1 \beta_2 \cdots \beta_n} \lambda_u(p_{\alpha_1 \alpha_2 \cdots \alpha_{m}}) \neq 0.
	\]
	\item For all $n \in \Nb$ there exists $m \in \Nb$ such that
	\[
		s_{\beta_1 \beta_2 \cdots \beta_n}^* \lambda_u(s_{\alpha_1 \alpha_2 \cdots \alpha_{m}}) \neq 0. 
	\]
\end{enumerate}
\end{lemma}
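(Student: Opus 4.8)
The plan is to prove $(2)\Leftrightarrow(3)$ by a term-by-term computation inside $C^*(E)$, and $(1)\Leftrightarrow(2)$ by translating everything into cylinder sets via Gelfand duality and then invoking compactness of $E^\infty$.

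First I would dispose of $(2)\Leftrightarrow(3)$, which holds for each pair $(n,m)$ separately: for any paths $\mu,\nu$ one has $p_\mu\lambda_u(p_\nu)\neq 0$ if and only if $s_\mu^*\lambda_u(s_\nu)\neq 0$. Since $\lambda_u$ is a $*$-homomorphism, $\lambda_u(s_\nu)$ is a partial isometry with $\lambda_u(s_\nu)\lambda_u(s_\nu)^*=\lambda_u(p_\nu)$, and $p_\mu=s_\mu s_\mu^*$. Writing $p_\mu\lambda_u(p_\nu)=p_\mu\lambda_u(s_\nu)\lambda_u(s_\nu)^*$ and using $\lambda_u(p_\nu)\lambda_u(s_\nu)=\lambda_u(s_\nu)$ shows this vanishes exactly when $p_\mu\lambda_u(s_\nu)=0$; and since $s_\mu^*s_\mu s_\mu^*=s_\mu^*$ one has $p_\mu x=0\iff s_\mu^*x=0$ for any $x$, applied to $x=\lambda_u(s_\nu)$. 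Taking $\mu=\beta_1\cdots\beta_n$ and $\nu=\alpha_1\cdots\alpha_m$ yields $(2)\Leftrightarrow(3)$ for the quantified statements.

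Next, for $(1)\Leftrightarrow(2)$ I would use the identification $\D_{C^*(E)}\cong C(E^\infty)$ under which $p_\mu$ becomes the indicator $\mathbf 1_{\Z(\mu)}$ and, because $\lambda_u|_{\D_{C^*(E)}}$ is dual to the homeomorphism $\lambda_u^*$, the projection $\lambda_u(p_\nu)$ becomes $\mathbf 1_{\Z(\nu)}\circ\lambda_u^*=\mathbf 1_{\psi_u(\Z(\nu))}$ (recall $\psi_u=(\lambda_u^*)^{-1}$). A nonzero projection in a commutative $C^*$-algebra has nonempty clopen support, so condition $(2)$ becomes: for every $n$ the set $\Z(\beta_1\cdots\beta_n)\cap\psi_u(\Z(\alpha_1\cdots\alpha_m))$ is nonempty for suitable $m$. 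The forward implication $(1)\Rightarrow(2)$ is then immediate: if $\psi_u(\alpha)=\beta$ then $\beta\in\Z(\beta_1\cdots\beta_n)$, and since $\alpha\in\Z(\alpha_1\cdots\alpha_m)$ also $\beta=\psi_u(\alpha)\in\psi_u(\Z(\alpha_1\cdots\alpha_m))$, so the intersection contains $\beta$ for all $n$ and all $m$.

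The substantive direction is $(2)\Rightarrow(1)$, and it is where I expect the only real work. The key point is that $\psi_u(\Z(\alpha_1\cdots\alpha_m))$ shrinks as $m$ grows, so by nesting the nonvanishing in $(2)$ propagates to arbitrarily large $m$; fixing $n$, the sets $C_m=\Z(\beta_1\cdots\beta_n)\cap\psi_u(\Z(\alpha_1\cdots\alpha_m))$ then form a nested family of nonempty compact clopen subsets of the compact space $E^\infty$, so the finite intersection property gives $\bigcap_m C_m\neq\emptyset$. Since $\psi_u$ is injective, $\bigcap_m\psi_u(\Z(\alpha_1\cdots\alpha_m))=\psi_u\big(\bigcap_m\Z(\alpha_1\cdots\alpha_m)\big)=\psi_u(\{\alpha\})=\{\psi_u(\alpha)\}$, whence $\psi_u(\alpha)\in\Z(\beta_1\cdots\beta_n)$, i.e.\ the first $n$ edges of $\psi_u(\alpha)$ agree with $\beta_1\cdots\beta_n$. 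Letting $n$ range over $\Nb$ forces $\psi_u(\alpha)=\beta$. The main obstacle is exactly this interplay of quantifiers: one must ensure the witness $m$ may be taken arbitrarily large, so that the compactness argument isolates the single point $\psi_u(\alpha)$ rather than merely a finite cylinder around it.
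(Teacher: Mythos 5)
Your handling of $(2)\Leftrightarrow(3)$ is correct, as is the translation of $(2)$ into the statement that $\Z(\beta_1\cdots\beta_n)\cap\psi_u(\Z(\alpha_1\cdots\alpha_m))\neq\emptyset$ and the implication $(1)\Rightarrow(2)$; the paper itself offers nothing beyond the remark that the lemma ``follows immediately'' from the identification $\D_{C^*(E)}\cong C(E^\infty)$, so your cylinder-set translation is exactly the intended route.

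The gap is in $(2)\Rightarrow(1)$, at precisely the point you flag as the main obstacle. The sets $C_m=\Z(\beta_1\cdots\beta_n)\cap\psi_u(\Z(\alpha_1\cdots\alpha_m))$ are nested \emph{decreasing} in $m$, so knowing that one $C_{m(n)}$ is nonempty propagates nonemptiness only to \emph{smaller} indices; your sentence ``by nesting the nonvanishing in $(2)$ propagates to arbitrarily large $m$'' has the inclusions backwards, and the compactness argument never gets started. In fact, with the literal existential quantifier, condition $(2)$ is equivalent to the single statement $\beta\in\psi_u(\Z(\alpha_1))$ (the $m=1$ case suffices, and $\psi_u(\Z(\alpha_1))$ is closed), which does not determine $\beta$: take $E$ the one-vertex, two-loop graph, $u=u_\J=1$ with $\J=\{(1,1),(2,2)\}$, so $\psi_u=\operatorname{id}$; then $\alpha=1222\cdots$ and $\beta=1111\cdots$ satisfy $(2)$ and $(3)$ with $m=1$ for every $n$, yet $\psi_u(\alpha)\neq\beta$. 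So the equivalence as literally stated is false and no argument can close this step. The condition that is actually true, and is what Lemma \ref{psiasEJ} later verifies, is that for every $n$ one can choose $m$ with $\lambda_u(p_{\alpha_1\cdots\alpha_m})\leq p_{\beta_1\cdots\beta_n}$ (equivalently $\lambda_u(s_{\alpha_1\cdots\alpha_m})=s_{\beta_1\cdots\beta_n}x$, equivalently $\psi_u(\Z(\alpha_1\cdots\alpha_m))\subseteq\Z(\beta_1\cdots\beta_n)$, equivalently that the displayed nonvanishing holds for all sufficiently large $m$). Under that reading your argument closes at once, and compactness is no longer needed: $\psi_u(\alpha)\in\psi_u(\Z(\alpha_1\cdots\alpha_m))\subseteq\Z(\beta_1\cdots\beta_n)$ for every $n$ forces $\psi_u(\alpha)=\beta$.
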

   
\begin{definition}
A transducer is a $5$-tuple $(\A, \B, \SS, s_0, \tau)$ where 
\begin{enumerate}
	\item $\A$, $\B$ are finite alphabets for input and output, respectively, 
	\item $\SS$ is a finite set of states,
	\item $s_0 \in \SS$ is the initial state, and 
	\item $\tau \colon \SS \times \A \to \SS \times \B^*$ is the transition function.  
\end{enumerate} 
\end{definition}   
   
Note that we allow the input and outupt alphabets to differ, in contrast to \cite{bb}.
Given a transducer $T = (\A, \B, \SS, s_0, \tau)$ and an infinite word $\alpha \in \A^\infty$ we recursively define a state sequence $(s_i)_{i=0}^\infty$ and an output sequence $(\beta_i)_{i=1}^\infty$ by 
\[
	(s_i, \beta_i) = \tau(s_{i-1}, \alpha_i).
\]
The output word of $T$ on $\alpha$ is the concatenation $\beta = \beta_1 \beta_2 \cdots$. 
This will be  written as $\alpha[T]\beta$.

Now, we turn to construction of a transducer which induces action $\psi_u$ on the infinite path space $E^\infty$. We do so by composing two 
transducers with distinct inputs--outputs, related to the two graphs $E$ and $E_\J$. At the heart of this construction lies the observation that under the hypothesis of Theorem \ref{diagonalauto} the infinite path spaces $E^\infty$ and $E_\J^\infty$ may be canonically identified. For the sake of 
greater clarity, we break the proof Theorem \ref{ducer} into several natural steps, given below as lemmas \ref{lem:syncsamefirst}--\ref{psiasEJ}. 

\begin{lemma} \label{lem:syncsamefirst}
Suppose $E_\J$ is left-synchronizing with delay $m$ and let $\gamma \in E^{m+2}$ be given.
If $\omega, \xi \in E_\J^{m+1}$ and 
\[
	\E(\omega) = \gamma_1 \gamma_2 \cdots \gamma_{m+2} = \E(\gamma),
\]
then $\omega_1 = \gamma_1$. 
\end{lemma}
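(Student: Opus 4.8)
The plan is to reduce the statement to a single application of the left-synchronization hypothesis (Definition~\ref{leftsync}) on the length-$m$ tails of the two $E_\J$-paths, followed by one application of the left-resolving property (Lemma~\ref{lresolving}) to their first edges. I read the intended conclusion as asserting that the first edges of the two paths coincide, i.e.\ $\omega_1 = \xi_1$ whenever $\E(\omega) = \gamma = \E(\xi)$; this ``finite lookahead'' fact (the first $E_\J$-edge is a function of the length-$(m+2)$ prefix $\gamma$) is exactly what is needed later to feed an $E$-path into the transducer, and it matches the name of the lemma.

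First I would fix the vertices visited by $\omega$, say $(\mu_1,e_1\nu_1) \Rightarrow \cdots \Rightarrow (\mu_{m+2},e_{m+2}\nu_{m+2})$, and similarly for $\xi$. Since $\E(\omega) = e_1 e_2 \cdots e_{m+2} = \gamma$, the $e$-entry of the $j$-th vertex of $\omega$ equals $\gamma_j$, and likewise for $\xi$. Next I would pass to the terminal subpaths $\omega' = \omega_2 \cdots \omega_{m+1}$ and $\xi' = \xi_2 \cdots \xi_{m+1}$, both of length $m$, and observe that deleting the first edge removes only the first letter of the $\E$-label, so that $\E(\omega') = \gamma_2 \cdots \gamma_{m+2} = \E(\xi')$. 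Applying left-synchronization with delay $m$ to $\omega'$ and $\xi'$ yields $s(\omega') = s(\xi')$; but $s(\omega') = r(\omega_1)$ and $s(\xi') = r(\xi_1)$, so $r(\omega_1) = r(\xi_1)$. Finally the single edges $\omega_1, \xi_1 \in E_\J^1$ satisfy $\E(\omega_1) = \gamma_1 \gamma_2 = \E(\xi_1)$ and now have equal ranges, so Lemma~\ref{lresolving} forces $\omega_1 = \xi_1$.

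The only delicate point is the bookkeeping of how $\E$ interacts with passing to a subpath, precisely the overlap phenomenon flagged after Definition~\ref{codinggraph}, where the $e$-entry of a shared vertex is counted in the $\E$-label of both pieces. I would be careful to use the correct version of this, namely that dropping the initial edge of a path removes exactly one letter (the first) from its $\E$-label, so that $\omega'$ and $\xi'$ genuinely carry $\E$-labels of length $m+1$ agreeing with the shift $\gamma_2 \cdots \gamma_{m+2}$. Everything else is a direct invocation of the two structural properties of $E_\J$ already in force under the standing hypotheses of this section (left-synchronization as an assumption, and left-resolving in the $\E$-label coming from non-negativity of all edges via Lemma~\ref{lresolving}), so no further estimates are required.
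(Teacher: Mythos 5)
Your proof is correct, and you have read the statement the intended way: the conclusion should be $\omega_1=\xi_1$ (the printed ``$\omega_1=\gamma_1$'' and ``$\E(\gamma)$'' are typos for $\omega_1=\xi_1$ and $\E(\xi)$), which is exactly what the paper's own proof establishes. Your route differs from the paper's only in the last step. The paper applies left-synchronization twice: once to the initial length-$m$ subpaths $\omega_1\cdots\omega_m$ and $\xi_1\cdots\xi_m$ to get $s(\omega_1)=s(\xi_1)$, and once to the terminal subpaths $\omega_2\cdots\omega_{m+1}$ and $\xi_2\cdots\xi_{m+1}$ to get $r(\omega_1)=r(\xi_1)$; it then concludes because $E_\J$ admits at most one edge between any ordered pair of vertices, so $\omega_1$ and $\xi_1$ are both the unique edge from $s(\omega)$ to $s(\omega_2\cdots\omega_{m+1})$. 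You apply synchronization only once, to the terminal subpaths, and close with the left-resolving property (Lemma~\ref{lresolving}) applied to the edges $\omega_1,\xi_1$, which share the $\E$-label $\gamma_1\gamma_2$ and, by that synchronization step, the same range. Both arguments are sound; yours is slightly more economical but leans on Lemma~\ref{lresolving} and hence on the standing assumption that $E_\J$ has only non-negative edges, whereas the paper's version needs nothing beyond synchronization and the absence of multiple edges. Since non-negativity is in force throughout Section~\ref{transducers}, this costs you nothing. Your bookkeeping of how $\E$ behaves under removal of the first edge (the label loses exactly its first letter) is also exactly right and is the only place where one could slip.
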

\begin{proof}
Define paths 
\begin{align*}
	\omega' &= \omega_1 \omega_2 \cdots \omega_m, \quad
	\omega'' = \omega_2 \omega_3 \cdots \omega_{m+1}, \\
	\xi' &= \xi_1 \xi_2 \cdots \xi_m, \quad
	\xi'' = \xi_2 \xi_3 \cdots \xi_{m+1}. 
\end{align*}
Note that $\E(\omega') = \E(\xi')$ and $\E(\omega'') = \E(\xi'')$.
From synchronization we get that $s(\omega) = s(\omega') = s(\xi') = s(\xi)$ and $s(\omega'') = s(\xi'')$.
Since $E_\J$ has no multiple edges we must have that both $\omega_1$ and $\xi_1$ is the unique edge in $E_\J$ from $s(\omega)$ to $s(\omega'')$. 
\end{proof}

\begin{lemma} \label{transducetoEJ}
Suppose that $E_\J$ is left-synchronizing with delay $m$, and let $\alpha \in E^\infty$. Then 
there exists a unique $\omega \in E_\J^\infty$ such that $\E(\omega) = \alpha$. 
Moreover, there exists a transducer $T$ with input alphabet $E^1$ and output alphabet $E_\J^1$ such that $\alpha[T]\omega$.
\end{lemma}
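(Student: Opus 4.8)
The plan is to settle the existence and uniqueness of the lift $\omega$ first, treating the transducer as a bookkeeping device that carries out this lift with a fixed delay. The whole argument hinges on one auxiliary map $\Gamma$. For a window $w = w_1 w_2 \cdots w_{m+2} \in E^{m+2}$, consider the set $\setof{\theta \in E_\J^{m+1}}{\E(\theta) = w}$. When $w$ is a genuine path in $E$, this set is nonempty: since $\Lambda_\J$ is injective, $\Lambda_\J(S_w) \neq 0$, and by Lemma \ref{imageofpath} this image is a sum indexed exactly by that set, which therefore cannot be empty. By Lemma \ref{lem:syncsamefirst} all paths in the set share the same first edge, which I call $\Gamma(w)$; so $\Gamma$ is defined on every length-$(m+2)$ path of $E$, and I send the transducer to a dead state on the (never-occurring) windows that are not paths in $E$.

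For existence, given $\alpha \in E^\infty$ I set $\omega_j = \Gamma(\alpha_j \alpha_{j+1} \cdots \alpha_{j+m+1})$ and check three things. First, the source of $\omega_j$ is the first vertex of a witnessing $\theta$ with $\E(\theta)_1 = \alpha_j$, so its distinguished edge is $\alpha_j$; hence, once the $\omega_j$ chain into a path, $\E(\omega)_j = \alpha_j$ and $\E(\omega) = \alpha$. Second, the edges chain up, $r(\omega_j) = s(\omega_{j+1})$: picking witnesses $\theta,\theta' \in E_\J^{m+1}$ with $\theta_1 = \omega_j$ and $\theta'_1 = \omega_{j+1}$, the length-$m$ subpaths $\theta_2 \cdots \theta_{m+1}$ and $\theta'_1 \cdots \theta'_m$ both carry the $\E$-label $\alpha_{j+1}\cdots\alpha_{j+m+1}$, so left-synchronization with delay $m$ (Definition \ref{leftsync}) forces equal sources, which is precisely $r(\omega_j) = s(\omega_{j+1})$. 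Thus $\omega = \omega_1\omega_2\cdots \in E_\J^\infty$. For uniqueness, if $\omega,\omega'$ both lift $\alpha$, then for each $j$ the length-$(m+1)$ subpaths $\omega_j\cdots\omega_{j+m}$ and $\omega'_j\cdots\omega'_{j+m}$ share the $\E$-label $\alpha_j\cdots\alpha_{j+m+1}$, so Lemma \ref{lem:syncsamefirst} gives $\omega_j = \omega'_j$; as $j$ is arbitrary, $\omega = \omega'$.

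For the transducer I take input alphabet $E^1$, output alphabet $E_\J^1$, state set $\SS = \bigcup_{k=0}^{m+1}(E^1)^k$ (input words of length at most $m+1$), and initial state the empty word. The transition function maintains a length-$(m+1)$ look-ahead buffer and slides it:
\[
  \tau(w,a) = \begin{cases}
    (wa,\ \emptyset) & \text{if } |w| \le m, \\
    (b_2\cdots b_{m+1}a,\ \Gamma(b_1 b_2 \cdots b_{m+1} a)) & \text{if } w = b_1 b_2 \cdots b_{m+1}.
  \end{cases}
\]
Running $T$ on $\alpha$ emits nothing for the first $m+1$ steps, and on step $m+1+j$ emits $\Gamma(\alpha_j\cdots\alpha_{j+m+1}) = \omega_j$; concatenating yields exactly $\omega_1\omega_2\cdots = \omega$, so $\alpha[T]\omega$. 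Note the same fixed $T$ works for every input $\alpha$.

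The main obstacle I anticipate is not any single computation but keeping the index bookkeeping straight: because $\E$ raises length by one and overlaps by one edge under concatenation, the transducer must buffer with delay $m+1$ and emit $\omega_j$ from the precise window $\alpha_j\cdots\alpha_{j+m+1}$. The one genuinely non-formal point is the chaining $r(\omega_j) = s(\omega_{j+1})$: it is tempting to try to extract it from left-resolvingness alone (Lemma \ref{lresolving}), but it truly needs the synchronization hypothesis applied to overlapping length-$m$ subpaths, as above.
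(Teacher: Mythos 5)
Your proposal is correct and follows essentially the same route as the paper: the map you call $\Gamma$ is exactly the paper's $\phi$ obtained from Lemma \ref{lem:syncsamefirst}, the lift $\omega$ is produced by the same sliding block code, and your buffered transducer is the standard implementation that the paper delegates to a citation of \cite{Rigo}. Your explicit verification that the outputs chain up ($r(\omega_j)=s(\omega_{j+1})$ via synchronization on overlapping length-$m$ subpaths) usefully fills in a step the paper compresses into the one-line existence claim.
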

\begin{proof}
By injectivity of $\lambda_u$ and Lemma \ref{imageofpath}, there exists an $\omega$ with $\E(\omega) = \alpha$. 
By Lemma \ref{lem:syncsamefirst}, such an $\omega$ is unique.

From Lemma \ref{lem:syncsamefirst} we see that there exists a map $\phi \colon E^{m+2} \to E_\J^1$ such that for any $\beta \in E^{m+2}$ and any $\xi \in E_J^{m+1}$ with $\E(\xi) = \beta$ we have $\xi_1 = \phi(\beta)$.
We can extend $\phi$ to $(E^1)^{m+2}$ by setting $\phi(\beta) = \emptyset$ for any word $\beta \in (E^1)^{m+2}$ which is not a path. 
Define a sliding block code $\Phi \colon E^\infty \to E_{\J}^\infty$ by 
\[
	\Phi(\gamma) = \phi(\gamma_1 \gamma_2 \cdots \gamma_{m+2}) \phi(\gamma_2 \gamma_3 \cdots \gamma_{m+3}) \cdots.
\]	
By construction, we have $\Phi(\alpha) = \omega$.
To complete the proof we observe that every sliding block code can be implemented by a transducer, see for instance \cite[Figure 2.3]{Rigo} and the surrounding discussion therein.
\end{proof}

\begin{lemma} \label{transducefromEJ}
There exists a transducer $T$ with input alphabet $E_\J^1$ and output alphabet $E^1$ such that 
if we input $\omega \in E_\J^\infty$ then the output is 
\[
	\mu \gamma_1 \gamma_2 \cdots,
\]
where $s_\mu = \LL_s(\omega)$ and $s_{\gamma_i} = \LL_\J(\omega_i)$, $i=1,2,\ldots$.
\end{lemma}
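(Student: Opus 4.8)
The plan is to build a transducer that simply reads the edges of $\omega$ one at a time and emits the corresponding $\LL_\J$-labels, using its internal state only to remember which vertex of $E_\J$ it currently sits at. Concretely, take the state set $\SS = E_\J^0 \sqcup \{\star\}$, where $\star$ is a fresh initial state, the input alphabet $E_\J^1$, the output alphabet $E^1$, and $s_0 = \star$. Because every edge of $E_\J$ is non-negative, for each $\eta \in E_\J^1$ there is a (possibly empty) path $\gamma(\eta) \in E^*$ with $\LL_\J(\eta) = S_{\gamma(\eta)}$; write $\mu(\eta) \in E^*$ for the path with $\LL_s(\eta) = S_{\mu(\eta)}$, i.e.\ the bottom-left label of the source vertex $s_\J(\eta)$. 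Define the transition function by
\[
	\tau(\star, \eta) = \bigl(r_\J(\eta),\, \mu(\eta)\gamma(\eta)\bigr),
	\qquad
	\tau(v, \eta) = \bigl(r_\J(\eta),\, \gamma(\eta)\bigr) \text{ for } v = s_\J(\eta),
\]
and on any pair $(v,\eta)$ with $v \neq s_\J(\eta)$ send the machine to a dead state emitting the empty word (this case never arises when the input is an actual path).

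Next I would run this transducer on an infinite path $\omega = \omega_1\omega_2\cdots \in E_\J^\infty$ and read off the output directly. The first step gives $(s_1, \beta_1) = \tau(\star, \omega_1) = (r_\J(\omega_1),\, \mu(\omega_1)\gamma(\omega_1))$; since $\omega$ is a path, $s_\J(\omega_1)$ is its starting vertex, so $\mu(\omega_1) = \mu$ is exactly the path with $S_\mu = \LL_s(\omega)$, and $S_{\gamma(\omega_1)} = \LL_\J(\omega_1)$. For $i \geq 2$ the state entering step $i$ is $s_{i-1} = r_\J(\omega_{i-1}) = s_\J(\omega_i)$, again because $\omega$ is a path, so the second clause applies and $\beta_i = \gamma(\omega_i)$ with $S_{\gamma(\omega_i)} = \LL_\J(\omega_i)$. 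Concatenating, the output word is $\beta_1\beta_2\cdots = \mu\,\gamma_1\gamma_2\cdots$ with $S_{\gamma_i} = \LL_\J(\omega_i)$, as required.

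The only genuinely delicate point is handling the initial prefix $\mu = \LL_s(\omega)$: a transducer of the given form produces no output before it has consumed an input letter, so there is nowhere to place $\mu$ except inside the first transition. This is resolved by the observation that the source vertex $s_\J(\omega_1)$, and hence $\mu$, is already determined by the very first input edge $\omega_1$, so folding the word $\mu\gamma(\omega_1)$ into the transition out of $\star$ is legitimate. Two routine checks remain, neither an obstacle: that the outputs genuinely lie in $(E^1)^*$, which is exactly the content of the non-negativity assumption, guaranteeing each $\LL_\J(\eta)$ equals $S_{\gamma(\eta)}$ rather than involving an adjoint; and that empty output words are permitted, which they are since $\tau$ is allowed to take values in $\SS \times \B^*$ and $\B^*$ contains the empty word, covering the degree-$0$ edges where $\LL_\J(\eta)$ is a vertex projection and $\gamma(\eta)$ is a vertex.
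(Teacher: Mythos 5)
Your construction is correct and takes essentially the same approach as the paper's: a direct definition of the transducer with a distinguished initial state that folds the prefix $\mu = \LL_s(\omega)$ into the first transition. The only (immaterial) difference is that the paper takes $E_\J^1$ as its state set and emits $\LL_\J(\omega_i)$ one step late, upon reading $\omega_{i+1}$, whereas you take $E_\J^0$ as states and emit $\LL_\J(\omega_i)$ immediately; on infinite path inputs the concatenated outputs coincide.
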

\begin{proof}
The state space of our transducer will be $E_\J^1$ and a special symbol $s_0$, which is the initial state.
The transition function $\tau$ is defined as follows:
From the initial state we move to the state of the first letter we read, and we write $\LL_s$ label. 
In symbols:
\[
	\tau(s_0, e) = (e, \mu), \text{ where } s_\mu = \LL_s(s(e)).
\] 
From any non-initial state, we still move to the state of the letter we read, but we now write the $\LL_\J$ label of the state we are leaving. 
In symbols
\[
	\tau(e, f) = (f, \gamma), \text{ where } s_\gamma = \LL_\J(e). 
\]
$T$ has the desired property by construction. 
\end{proof}

\begin{lemma} \label{psiasEJ}
If $\alpha \in E^\infty$ and $\omega \in E_\J^\infty$ is the unique path with $\E(\omega) = \alpha$ (as in Lemma \ref{transducetoEJ}), then 
\[
	\psi_u(\alpha) = \mu \gamma_1 \gamma_2 \cdots,
\]
where $s_\mu = \LL_s(\omega)$ and $s_{\gamma_i} = \LL_\J(\omega_i)$, $i=1,2,\ldots$.
\end{lemma}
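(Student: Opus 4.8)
The plan is to verify the claimed formula by checking the characterization of $\psi_u$ in Lemma \ref{lem:whatpsidoes}, working throughout in $C^*(E)$, where $\lambda_u$ together with the commuting projections of the diagonal is available. Write $\beta = \mu\gamma_1\gamma_2\cdots$ for the candidate output, so that $s_\mu = \LL_s(\omega)$ and $s_{\gamma_i} = \LL_\J(\omega_i)$. Before doing anything else I would confirm that $\beta$ genuinely names an element of $E^\infty$. Because $E_\J$ has only non-negative edges, each $\LL_\J(\omega_i)$ is a (possibly empty) path label $s_{\gamma_i}$, and the finite products $\LL_s(\omega)\LL_\J(\omega_1)\cdots\LL_\J(\omega_k) = s_{\mu\gamma_1\cdots\gamma_k}$ are nonzero single path labels; this simultaneously forces the range–source compatibilities $r(\mu)=s(\gamma_1)$ and $r(\gamma_i)=s(\gamma_{i+1})$. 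That $\beta$ is truly infinite (rather than eventually constant) follows from the absence of non-positive cycles: $\omega$ cannot traverse infinitely many consecutive degree-zero edges, for in the finite graph $E_\J$ this would produce a zero cycle. Hence infinitely many $\gamma_i$ are nonempty and $|\mu\gamma_1\cdots\gamma_k|\to\infty$ as $k\to\infty$.

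Next I would set up the approximating projections. Fix $n\in\Nb$, choose $k$ so large that $|\mu\gamma_1\cdots\gamma_k|\ge n$, and put $m=k+1$. Since the length-$k$ prefix $\zeta = \omega_1\cdots\omega_k$ of $\omega$ has $\E(\zeta)$ of length $k+1$ equal to $\alpha_1\cdots\alpha_{k+1}=\alpha_1\cdots\alpha_m$, Lemma \ref{imageofpath} exhibits $\LL_s(\zeta)\LL_\J(\zeta)\LL_r(\zeta)^* = s_{\mu\gamma_1\cdots\gamma_k}\,\LL_r(\zeta)^*$ as one summand of $\lambda_u(s_{\alpha_1\cdots\alpha_m})$. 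Squaring exactly as in Equation (\ref{eq:pmu}), and using Lemma \ref{lem:rangelabels} to kill the cross terms $\LL_r(\zeta)^*\LL_r(\zeta')$ for $\zeta\ne\zeta'$, I obtain
\[
	\lambda_u(p_{\alpha_1\cdots\alpha_m}) = \sum_{\E(\zeta)=\alpha_1\cdots\alpha_m} \LL_s(\zeta)\LL_\J(\zeta)\LL_\J(\zeta)^*\LL_s(\zeta)^*,
\]
a sum of projections, one summand of which is precisely $p_{\mu\gamma_1\cdots\gamma_k}$. As a sum of positive elements it dominates this summand, so $\lambda_u(p_{\alpha_1\cdots\alpha_m}) \ge p_{\mu\gamma_1\cdots\gamma_k}$.

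Finally I would combine the two dominations. By the choice of $k$ and the definition of $\beta$, the word $\beta_1\cdots\beta_n$ is a prefix of $\mu\gamma_1\cdots\gamma_k$, whence $p_{\beta_1\cdots\beta_n}\ge p_{\mu\gamma_1\cdots\gamma_k}$ as well. Both $p_{\beta_1\cdots\beta_n}$ and $\lambda_u(p_{\alpha_1\cdots\alpha_m})$ lie in the commutative algebra $\D_{C^*(E)}$ (recall $\lambda_u$ preserves the diagonal), so they are commuting projections each dominating the nonzero projection $p_{\mu\gamma_1\cdots\gamma_k}$; their product is therefore again a projection dominating $p_{\mu\gamma_1\cdots\gamma_k}$, and in particular $p_{\beta_1\cdots\beta_n}\lambda_u(p_{\alpha_1\cdots\alpha_m})\ne 0$. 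Since $n$ was arbitrary, condition (2) of Lemma \ref{lem:whatpsidoes} holds, giving $\psi_u(\alpha)=\beta=\mu\gamma_1\gamma_2\cdots$. I expect the only delicate points to be bookkeeping rather than analysis: tracking the index shift between $E$-paths of length $k$ and their $\E$-labels of length $k+1$, and verifying that $\beta$ is a bona fide infinite path. Passing to commuting projections in $\D_{C^*(E)}$ is what makes the argument clean, as it removes any need to control cancellation among the remaining summands of $\lambda_u(p_{\alpha_1\cdots\alpha_m})$.
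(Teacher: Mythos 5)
Your proof is correct, but it follows a genuinely different route from the paper's. The paper verifies condition (3) of Lemma \ref{lem:whatpsidoes} in a strong form: with $k$ the synchronization delay and $m=k+1+n$, Lemma \ref{lem:syncsamefirst} forces \emph{every} path $\xi$ with $\E(\xi)=\alpha_1\cdots\alpha_m$ to begin with $\omega_1\cdots\omega_n$, so the common word $\LL_s(\omega)\LL_\J(\omega_1\cdots\omega_n)=s_{\mu\gamma_1\cdots\gamma_n}$ factors out of the entire sum of Lemma \ref{imageofpath}, giving $\lambda_u(s_{\alpha_1\cdots\alpha_m})=s_{\mu\gamma_1\cdots\gamma_n}x$. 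You instead control only the single summand indexed by the prefix $\omega_1\cdots\omega_k$ of $\omega$ itself: squaring, killing the cross terms with Lemma \ref{lem:rangelabels}, and using positivity in $C^*(E)$ yields $\lambda_u(p_{\alpha_1\cdots\alpha_{k+1}})\ge p_{\mu\gamma_1\cdots\gamma_k}$, from which condition (2) follows. Notably, your argument never uses left-synchronization in its body (it enters only through the hypothesis that $\omega$ exists and is unique), and it makes explicit a point the paper passes over silently, namely that $\mu\gamma_1\gamma_2\cdots$ is a genuine infinite path because the absence of non-positive cycles rules out infinitely many consecutive empty labels; the paper needs the same fact to pass from prefixes $\beta_1\cdots\beta_n$ to prefixes $\mu\gamma_1\cdots\gamma_n$. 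What the paper's stronger conclusion buys is the common-prefix property of all summands, which is what the transducer of Lemma \ref{transducefromEJ} ultimately exploits; what your version buys is economy, at the mild price of invoking positivity and hence working in $C^*(E)$ rather than $L_\Zb(E)$ (harmless, since $\psi_u$ lives at the $C^*$-level anyway). One caution: condition (2) of Lemma \ref{lem:whatpsidoes}, read with a bare existential quantifier over $m$, is delicate on its own, since the projections $\lambda_u(p_{\alpha_1\cdots\alpha_m})$ decrease in $m$ and non-vanishing of the product for one small $m$ per $n$ would not by itself pin down $\beta$; but the inequality you actually prove holds for the arbitrarily large values $m=k+1$ and shows directly that $\beta$ lies in the support of every $\lambda_u(p_{\alpha_1\cdots\alpha_m})$, which identifies $\beta$ with $\psi_u(\alpha)$, so your argument is self-contained however that condition is parsed.
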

\begin{proof}
Suppose that $E_\J$ is left-synchronizing with delay $k$. 
To ease notation, we put $\beta = \mu \gamma_1 \gamma_2 \cdots$ and write $\delta_{[m]} = \delta_1 \delta_2 \cdots \delta_m$ for any path $\delta$ 
(possibly infinite) which begins with $\delta_1 \delta_2 \cdots \delta_m$.
By Lemma \ref{lem:whatpsidoes}, it suffices to show that for all $n \in \Nb$ there exists an $m \in \Nb$ such that 
\[
	\lambda_u(s_{\alpha_{[m]}}) = s_{\beta_{[n]}} x. 
\]	
This is equivalent to showing that for all $n \in \Nb$ there exists an $m \in \Nb$ such that
\[
	\lambda_u(s_{\alpha_{[m]}}) = s_{\mu \gamma_1 \gamma_2 \cdots \gamma_n} x. 
\]	
It is this last claim we will establish. 

Let $n \in \Nb$ be given. 
From Lemma \ref{imageofpath} we get for all $m \in \Nb$ that 
\[
	\lambda_u(s_{\alpha_{[m]}}) = \sum_{\E(\xi) = \alpha_{[m]}} \LL_s(\xi) \LL_\J(\xi) \LL_r(\xi)^*.
\]
Let $m = k + 1 + n$. 
Then it follows from Lemma \ref{lem:syncsamefirst} that if $\xi$ is a path with $\E(\xi) = \alpha_{[m]}$ then $\xi_{[n]} = \omega_{[n]}$.
Consequently   
\begin{align*}
	\lambda_u(s_{\alpha_{[m]}}) &= \sum_{\E(\xi) = \alpha_{[m]}} \LL_s(\xi) \LL_\J(\xi) \LL_r(\xi)^* \\
	&= \sum_{\E(\xi) = \alpha_{[m]}} \LL_s(\omega) \LL_\J(\omega_{[n]}) \LL_\J(\xi_{n+1} \xi_{n+2} \cdots \xi_m) \LL_r(\xi)^* \\
	&= \LL_s(\omega) \LL_\J(\omega_{[n]}) \sum_{\E(\xi) = \alpha_{[m]}} \LL_\J(\xi_{n+1} \xi_{n+2} \cdots \xi_m) \LL_r(\xi)^* \\
	&= s_\mu s_{\gamma_1 \gamma_2 \cdots \gamma_n}  x.  \qedhere
\end{align*}
\end{proof}

\begin{theorem}\label{ducer}
There exists a transducer $T$ with input and output alphabet $E^1$ such that for all $\alpha \in E^\infty$ we have
\[
	\alpha[T]\psi_u(\alpha).
\]
\end{theorem}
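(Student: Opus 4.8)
The plan is to compose the two transducers constructed in Lemmas \ref{transducetoEJ} and \ref{transducefromEJ}, yielding a single transducer $T$ whose input and output alphabets are both $E^1$. Concretely, let $T_1$ be the transducer of Lemma \ref{transducetoEJ}, with input alphabet $E^1$ and output alphabet $E_\J^1$, and let $T_2$ be the transducer of Lemma \ref{transducefromEJ}, with input alphabet $E_\J^1$ and output alphabet $E^1$. The composite $T = T_2 \circ T_1$ reads an input letter from $E^1$, has $T_1$ produce a (possibly empty) string of $E_\J^1$-letters, which are then fed through $T_2$ to produce the $E^1$-output. The first step is simply to record that this composition is again a transducer in the sense of our definition: the state set is the product $\SS_{T_1} \times \SS_{T_2}$ (which is finite), the initial state is the pair of initial states, and the transition function is obtained by running $T_1$ one step and then running $T_2$ on the resulting finite output word, accumulating the $E^1$-output. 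Since $T_2$'s transition on a finite word lands in $\SS_{T_2} \times (E^1)^*$, the composite transition function has the correct type $\SS_T \times E^1 \to \SS_T \times (E^1)^*$.

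Having established that $T$ is a legitimate transducer, the second step is to verify the defining input--output relation $\alpha[T]\psi_u(\alpha)$ for every $\alpha \in E^\infty$. Here I would chain the two lemmas together on the level of words. Given $\alpha \in E^\infty$, Lemma \ref{transducetoEJ} guarantees a unique $\omega \in E_\J^\infty$ with $\E(\omega) = \alpha$ and asserts $\alpha[T_1]\omega$. Feeding this $\omega$ into $T_2$, Lemma \ref{transducefromEJ} tells us the output is $\mu \gamma_1 \gamma_2 \cdots$ with $s_\mu = \LL_s(\omega)$ and $s_{\gamma_i} = \LL_\J(\omega_i)$. But Lemma \ref{psiasEJ} identifies precisely this word $\mu \gamma_1 \gamma_2 \cdots$ with $\psi_u(\alpha)$. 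Thus the output of the composite on $\alpha$ is $\psi_u(\alpha)$, which is exactly the claim.

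The one point requiring genuine care — and what I expect to be the main obstacle — is the \emph{asynchrony} of the composition, namely that $T_1$ and $T_2$ do not process letters in lockstep. The transducer $T_1$ arises from a sliding block code with window $m+2$, so it must read ahead before emitting its first $E_\J^1$-letter; correspondingly $T_2$ emits the $\LL_s$-prefix $\mu$ and then lags by one symbol (its output on reading $\omega_i$ is the label of the \emph{previous} state). To make the composition well-defined as a transducer one must absorb these finite buffers into the state: the state of $T$ should remember the bounded lookahead window of $T_1$ together with the current pending state of $T_2$. I would verify that because $T_1$'s lookahead is of fixed finite length $m+2$ and $T_2$'s delay is bounded, only finitely many buffer configurations arise, so the product state space stays finite and the output word produced in the limit is genuinely the concatenation $\psi_u(\alpha)$ with no letters lost or duplicated. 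Once the bookkeeping of these finite buffers is handled, the correctness of the output follows formally from the three cited lemmas, and no further computation is needed.
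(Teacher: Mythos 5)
Your proof is correct and follows essentially the same route as the paper: compose the transducers of Lemmas \ref{transducetoEJ} and \ref{transducefromEJ} and invoke Lemma \ref{psiasEJ} to identify the output with $\psi_u(\alpha)$. The only difference is that the paper outsources the fact that a composition of transducers is again a transducer to \cite[Section 3.2]{russian}, whereas you spell out the product-state and buffering bookkeeping explicitly.
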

\begin{proof}
It is described in \cite[Section 3.2]{russian}  how the composition of two transducers is again a transducer, 
and that if $a[S]b$ and $b[S']c$ then $a[S' \circ S]c$.  
Let $T$ be the composition of the transducers from lemmas \ref{transducetoEJ} and \ref{transducefromEJ}. 
Then it follows from Lemma \ref{psiasEJ} that $\alpha[T]\psi_u(\alpha)$.  
\end{proof}

\begin{remark} \label{rmk:howtofindpsi} \rm
If we have a unitary $u_\J$ with left-synchronizing (delay $m$) coding graph $u_\J$ and we want to describe the action of $\psi_u$, then the proof of Lemma \ref{psiasEJ} gives us a recipe.
First, as in Lemma \ref{transducetoEJ}, find a map $\phi \colon E^{m+2} \to E_\J^1$ with the property that for all $\omega \in E_\J^{m+1}$ we have $\phi(\E(\omega)) = \omega_1$.
Then define two maps $S,L \colon E_\J^1 \to E^*$ by 
\begin{align*}
	S(\omega) &= \mu, \text{ where } \LL_r(\omega) = s_\mu, \\
	L(\omega) &= \gamma, \text{ where } \LL_\J(\omega) = s_\gamma.
\end{align*}
Then for any $\alpha \in E^\infty$ we have
\[
	\psi_u(\alpha) = S(\phi(\alpha_1 \alpha_2 \cdots \alpha_{m+2})) L(\phi(\alpha_1 \alpha_2 \cdots \alpha_{m+2})) L(\phi(\alpha_2 \alpha_3 \cdots \alpha_{m+3})) \cdots.
\]
Alternatively, we can define $K \colon E^\infty \to E^\infty$ by 
\[	
	K(\alpha) = L(\phi(\alpha_1 \alpha_2 \cdots \alpha_{m+2})) K(\alpha_2 \alpha_3 \cdots),
\]
and then 
\[
	\psi_u(\alpha) = S(\phi(\alpha_1 \alpha_2 \cdots \alpha_{m+2})) K(\alpha)
\]
\end{remark}


\section{Examples}

In examples \ref{exam1} and \ref{exam2} below, we work with the Leavitt path algebra $L_{2, \Zb}$ corresponding to the Cuntz algebra $\OO_2$, i.e. the Leavitt path algebra of a graph consisting of one vertex, denoted $\emptyset$, and two edges, denoted $1$ and $2$, respectively. 

\begin{example}\label{exam1}
\rm
Consider a unitary 
$$ u = S_{11}S_1^* + S_{12}S_{21}^* + S_2S_{22}^* $$ 
in $L_{2, \Zb}$ and the corresponding inner automorphism $\ad(u) = \Lambda_{u\Phi(u^*)}$. We have 
$$ u\Phi(u^*) = P_{111} + S_{12}S_{211}^* + S_{1121}S_{112}^* + S_{21}S_{212}^* + S_{1122}S_{12}^* + P_{22}. $$
The coding graph $(E_\J,\LL_\J)$ of this representation of the unitary $u\Phi(u^*)$ is shown in the figure below. 
\[ \beginpicture
\setcoordinatesystem units <1cm,1cm>
\setplotarea x from -7 to 7, y from -3 to 3
\setlinear 
\plot -1 2.6  1 2.6  1 1  -1 1  -1 2.6 /
\plot -1 1.8  1 1.8 /
\plot 0 1  0 1.8 /
\put {$1$} at 0 2.2
\put {$1122$} at -0.5 1.4
\put {$2$} at 0.5 1.4
\setlinear 
\plot 3 2.6  5 2.6  5 1  3 1  3 2.6 /
\plot 3 1.8  5 1.8 /
\plot 4 1  4 1.8 /
\put {$2$} at 4 2.2
\put {$22$} at 3.5 1.4
\put {$2$} at 4.5 1.4
\setlinear 
\plot -5 2.6  -3 2.6  -3 1  -5 1  -5 2.6 /
\plot -5 1.8  -3 1.8 /
\plot -4 1  -4 1.8 /
\put {$1$} at -4 2.2
\put {$111$} at -4.5 1.4
\put {$11$} at -3.5 1.4
\setlinear 
\plot -5 -2.6  -3 -2.6  -3 -1  -5 -1  -5 -2.6 /
\plot -5 -1.8  -3 -1.8 /
\plot -4 -2.6  -4 -1.8 /
\put {$1$} at -4 -1.4
\put {$1121$} at -4.5 -2.2
\put {$12$} at -3.5 -2.2
\setlinear 
\plot -3 1.8  -1 1.8 /
\plot 1 1.8  3 1.8 /
\plot -4 1  -4 -1 /
\plot 4 1  4 -1 /
\plot 0 1  0 -1 /
\plot -3 1  -1 -1 /
\plot 1 1  3 -1 /
\plot 1 -1.8  3 -1.8 /
\plot -3 -1.5  -1 -1.5 /
\plot -3 -2.1  -1 -2.1 /
\plot 5 -2.6  3 -2.6  3 -1  5 -1  5 -2.6 /
\plot 5 -1.8  3 -1.8 /
\plot 4 -2.6  4 -1.8 /
\put {$2$} at 4 -1.4
\put {$12$} at 4.5 -2.2
\put {$21$} at 3.5 -2.2
\setlinear 
\plot  1 -1  1 -2.6  -1 -2.6  -1 -1  1 -1  /
\plot -1 -1.8  1 -1.8 /
\plot 0 -2.6  0 -1.8 /
\put {$2$} at 0 -1.4
\put {$11$} at 0.5 -2.2
\put {$12$} at -0.5 -2.2
\circulararc  360 degrees from 5 1.8  center at 5.7 1.8
\circulararc  360 degrees from -5 1.8  center at -5.7 1.8
\arrow <0.235cm> [0.2,0.6] from -6.4 1.9 to -6.4 1.7
\arrow <0.235cm> [0.2,0.6] from 6.4 1.9 to 6.4 1.7
\arrow <0.235cm> [0.2,0.6] from -1.2 1.8 to -1 1.8
\arrow <0.235cm> [0.2,0.6] from 2.8 1.8 to 3 1.8
\arrow <0.235cm> [0.2,0.6] from -4 -0.8 to -4 -1
\arrow <0.235cm> [0.2,0.6] from 4 -0.8 to 4 -1
\arrow <0.235cm> [0.2,0.6] from 0 0.8 to 0 1
\arrow <0.235cm> [0.2,0.6] from 1.2 -1.8 to 1 -1.8
\arrow <0.235cm> [0.2,0.6] from -1.2 -1.5 to -1 -1.5
\arrow <0.235cm> [0.2,0.6] from -2.8 -2.1 to -3 -2.1
\arrow <0.235cm> [0.2,0.6] from -2.8 0.8 to -3 1
\arrow <0.235cm> [0.2,0.6] from 2.8 -0.8 to 3 -1
\put {$S_1$} at -6.7 1.8
\put {$S_2$} at 6.7 1.8
\put {$S_{22}$} at -2 2.1
\put {$S_2$} at 2 2.1
\put {$S_{21}$} at -3.6 0
\put {$S_{22}$} at 0.4 0
\put {$S_{1}$} at 4.3 0
\put {$S_1$} at -1.8 0.2
\put {$S_1$} at 2.2 0.2
\put {$S_\emptyset$} at 2 -2.1
\put {$S_\emptyset$} at -2 -1.2
\put {$S_{21}$} at -2 -2.4
\endpicture \]
One can easily check that the labeled graph $(E_\J,\E)$ is left-synchronizing. Thus $\Lambda_u$ restricts to an automorphism of the diagonal MASA 
$\D_{L_\Zb(E)}$, by Theorem \ref{diagonalauto}. 
\end{example}

\begin{example}\label{exam2}
\rm
The unitary 
$$ u = P_{122} + S_{11}S_{121}^* + S_{121}S_{11}^* + P_2 $$ 
gives rise to an outer automorphism of order two of $C^*(E)$, \cite[Formula (10)]{CHSam}. 
The corresponding coding graph is shown below. 
\[ \beginpicture
\setcoordinatesystem units <1cm,1cm>
\setplotarea x from -7 to 7, y from -3 to 3
\put {$\emptyset$} at 4.5 -2.2
\setlinear 
\plot 3 2.6  5 2.6  5 1  3 1  3 2.6 /
\plot 3 1.8  5 1.8 /
\plot 4 1  4 1.8 /
\put {$1$} at 4 2.2
\put {$11$} at 3.5 1.4
\put {$21$} at 4.5 1.4
\setlinear 
\plot -5 2.6  -3 2.6  -3 1  -5 1  -5 2.6 /
\plot -5 1.8  -3 1.8 /
\plot -4 1  -4 1.8 /
\put {$1$} at -4 2.2
\put {$121$} at -4.5 1.4
\put {$1$} at -3.5 1.4
\setlinear 
\plot -5 -2.6  -3 -2.6  -3 -1  -5 -1  -5 -2.6 /
\plot -5 -1.8  -3 -1.8 /
\plot -4 -2.6  -4 -1.8 /
\put {$1$} at -4 -1.4
\put {$122$} at -4.5 -2.2
\put {$22$} at -3.5 -2.2
\put {$2$} at 4 -1.4
\put {$2$} at 3.5 -2.2
\setlinear 
\plot 3.7 1  3.7 -1 /
\plot 4.3 1  4.3 -1 /
\plot -3 1.8  3 1.8 /
\plot -3 1  3 -1 /
\plot -4 1  -4 -1 /
\plot -3 -2.1  3 -2.1 /
\plot -3 -1.5  3 -1.5 /
\plot 3 -2.6  5 -2.6  5 -1  3 -1  3 -2.6 /
\plot 3 -1.8  5 -1.8 /
\plot 4 -2.6  4 -1.8 /
\circulararc  360 degrees from 5 -1.8  center at 5.7 -1.8
\circulararc  360 degrees from -5 1.8  center at -5.7 1.8
\arrow <0.235cm> [0.2,0.6] from -6.4 1.9 to -6.4 1.7
\arrow <0.235cm> [0.2,0.6] from 6.4 -1.7 to 6.4 -1.9
\arrow <0.235cm> [0.2,0.6] from 2.8 1.8 to 3 1.8
\arrow <0.235cm> [0.2,0.6] from -4 -0.8 to -4 -1
\arrow <0.235cm> [0.2,0.6] from 3.7 0.8 to 3.7 1
\arrow <0.235cm> [0.2,0.6] from 4.3 -0.8 to 4.3 -1
\arrow <0.235cm> [0.2,0.6] from 2.8 -1.5 to 3 -1.5
\arrow <0.235cm> [0.2,0.6] from -2.8 -2.1 to -3 -2.1
\arrow <0.235cm> [0.2,0.6] from 2.7 -0.9   to 3 -1
\put {$S_{21}$} at -6.7 1.8
\put {$S_2$} at 6.7 -1.8
\put {$S_1$} at 0 2.1
\put {$S_{22}$} at -4.4 0
\put {$S_1^*$} at 4.6 0
\put {$S_{11}$} at 3.3 0
\put {$S_2^*$} at 0 -1.2
\put {$S_{122}$} at 0 -2.4
\put {$S_{121}$} at 0.3 0.3
\endpicture \]
Since this coding graph contains negative edges incoming to the vertex $(2,2)$, we perform splitting at this vertex. 
The resulting coding graph $(E_\J,\LL_\J)$ of this new representation of the unitary $u$, shown below, contains only non-negative edges. 
\[ \beginpicture
\setcoordinatesystem units <1cm,0.7cm>
\setplotarea x from -7 to 7, y from -5 to 5
\setlinear 
\plot -5 5  -3 5  -3 3  -5 3  -5 5 /  
\plot -5 4  -3 4 /
\plot -4 3  -4 4 /
\plot 3 5  5 5  5 3  3 3  3 5 /
\plot 3 4  5 4 /
\plot 4 3  4 4 /
\plot -5 -5  -3 -5  -3 -3  -5 -3  -5 -5 /  
\plot -5 -4  -3 -4 /
\plot 4 -5  4 -4 /
\plot 3 -5  5 -5  5 -3  3 -3  3 -5 /
\plot 3 -4  5 -4 /
\plot -4 -5  -4 -4 /
\plot -1 1  1 1  1 -1  -1 -1  -1 1 /
\plot -1 0  1 0 /
\plot 0 -1 0 0 /
\circulararc  360 degrees from -5 4  center at -5.7 4
\circulararc  360 degrees from 5 -4  center at 5.7 -4
\setlinear
\plot -3 4  3 4 /
\plot -3 -4  3 -4 /
\plot -4 3  -4 -3 /
\plot -3 3  -1 1 /
\plot -3 -3  -1 -1 /
\plot 1 -1  3 -3 /
\plot 1.1 0.7  3.3 2.9 /
\plot 0.7 1.1  2.9 3.3 /
\arrow <0.235cm> [0.2,0.6] from 2.8 4 to 3 4
\arrow <0.235cm> [0.2,0.6] from -6.4 4.1 to -6.4 3.9
\arrow <0.235cm> [0.2,0.6] from 6.4 -3.9 to 6.4 -4.1
\arrow <0.235cm> [0.2,0.6] from 2.7 3.1  to 2.9 3.3
\arrow <0.235cm> [0.2,0.6] from 1.3 0.9   to 1.1 0.7
\arrow <0.235cm> [0.2,0.6] from -4 -2.8 to -4 -3
\arrow <0.235cm> [0.2,0.6] from 2.8 -4 to 3 -4
\arrow <0.235cm> [0.2,0.6] from -2.8 2.8   to -3 3
\arrow <0.235cm> [0.2,0.6] from 1.2 -1.2 to 1 -1
\arrow <0.235cm> [0.2,0.6] from -2.8 -2.8   to -3 -3
\put {$S_1$} at 0 4.3
\put {$S_{21}$} at -6.8 4
\put {$S_2$} at 6.7 -4
\put {$S_{22}$} at -4.4 0
\put {$S_\emptyset$} at 0 -4.4
\put {$S_{21}$} at -2.2 1.8
\put {$S_{22}$} at -1.7 -2.3
\put {$S_{1}$} at 1.8 -2.2
\put {$S_1$} at 1.5 2.5
\put {$S_\emptyset$} at 2.5 1.5
\put {$1$} at -4 4.5
\put {$121$} at -4.5 3.5
\put {$1$} at -3.5 3.5
\put {$1$} at 4 4.5
\put {$11$} at 3.5 3.5
\put {$21$} at 4.5 3.5
\put {$2$} at 0 0.5
\put {$21$} at -0.5 -0.5
\put {$1$} at 0.5 -0.5
\put {$1$} at -4 -3.5
\put {$122$} at -4.5 -4.5
\put {$22$} at -3.5 -4.5
\put {$2$} at 4 -3.5
\put {$22$} at 3.5 -4.5
\put {$2$} at 4.5 -4.5
\endpicture \]
The labeled graph $(E_\J,\E)$ is left-synchronizing and thus $\Lambda_u$ restricts to an automorphism of the diagonal MASA $\D_{L_\Zb(E)}$, 
by Theorem \ref{diagonalauto}. 

We now follow Remark \ref{rmk:howtofindpsi} to describe $\psi_u$. 
First we note that $(E_\J,\E)$ is left-synchronizing with delay $2$, and hence an $\E$-label of length $4$ will uniquely determine the first edge in any path.  
There are $2^4 = 16$ length $4$ paths in $E$. 
The table below shows what each of these gets mapped to under $\phi$, $S \circ \phi$ and $L \circ \phi$. 
Recall that $[J_{121}, J_{11}]$ denotes the horizontal edge at the top of the graph.

\begin{center}
\begin{tabular}{M|MMM}
E^4 & \phi & S \circ \phi & L \circ \phi \\
\hline
 1111 & [J_{121}, J_{121}]	& 121	& 21 \\
 1112 & [J_{121}, J_{121}] 	& 121	& 21 \\
 1121 & [J_{121}, J_{11}]	& 121	& 1 \\
 1122 & [J_{121}, J_{122}]	& 121	& 22 \\
 1211 & [J_{11}, J_{21}]	& 11	& \emptyset \\
 1212 & [J_{11}, J_{21}] 	& 11 	& \emptyset \\
 1221 & [J_{122}, J_{21}] 	& 122 	& \emptyset \\
 1222 & [J_{122}, J_{21}]	& 122 	& \emptyset \\
 2111 & [J_{21}, J_{121}]	& 21	& 21 \\
 2112 & [J_{21}, J_{121}] 	& 21	& 21 \\
 2121 & [J_{21}, J_{11}]	& 21	& 1 \\
 2122 & [J_{21}, J_{122}]	& 21	& 22 \\
 2211 & [J_{22}, J_{21}]	& 22	& 1 \\
 2212 & [J_{22}, J_{21}] 	& 22 	& 1 \\
 2221 & [J_{22}, J_{22}] 	& 22 	& 2 \\
 2222 & [J_{22}, J_{22}]	& 22 	& 2 
\end{tabular}
\end{center}

So if we for instance consider the path $\alpha = (112)^\infty = 112112112112112 \cdots$, then there are only three possible length four subpaths, namely $1121$, $1211$, and $2112$.
So the relevant information in the above table reduces to.

\begin{center}
\begin{tabular}{M|MM}
E^4 & S \circ \phi & L \circ \phi \\
\hline
 1121 & 121	& 1 \\
 1211 & 11	& \emptyset \\
 2112 & 21	& 21 \\
\end{tabular}
\end{center} 
We can now compute 
\begin{align*}
	\psi_u((112)^\infty) &= (S \circ \phi)(1121) \left( ((L \circ \phi)(1121)) ((L \circ \phi)(1211)) ((L \circ \phi)(2112)) \right)^\infty \\
	&= 121 ((1) (\emptyset) (21))^\infty = 121 (121)^\infty = (121)^\infty.
\end{align*}
Similarly 
\begin{align*}
	\psi_u((121)^\infty) &= (S \circ \phi)(1211) \left( ((L \circ \phi)(1211)) ((L \circ \phi)(2112)) ((L \circ \phi)(1121)) \right)^\infty \\
	&= 11 ((\emptyset) (21) (1))^\infty = 11 (211)^\infty = (112)^\infty.
\end{align*}
As it should be, since $\psi_u$ has order $2$.

\end{example}

\begin{example}\label{exam3}
\rm
In this example, we work with another Leavitt path algebra, given by the following graph $F$. 
It should be noted that the corresponding graph Leavitt path algebra is again isomorphic to $L_{2, \Zb}$. 
\[ \beginpicture
\setcoordinatesystem units <2cm,2cm>
\setplotarea x from -2 to 2, y from -0.7 to 0.7
\circulararc  360 degrees from 0 0  center at -0.5 0
\setquadratic
\plot 0 0  0.75 0.35  1.5 0 /
\plot 0 0  0.75 -0.35  1.5 0 /
\arrow <0.235cm> [0.2,0.6] from -1 0.1 to -1 -0.1
\arrow <0.235cm> [0.2,0.6] from  0.74 0.35 to 0.76 0.35
\arrow <0.235cm> [0.2,0.6] from  0.76 -0.35  to 0.74 -0.35
\put {$e_1$} at -1.2 0
\put {$e_2$} at 0.75 0.5
\put {$e_3$} at 0.75 -0.5
\put {$v$} at -0.1 0
\put {$w$} at 1.6 0
\endpicture \]
The generating partial isometries corresponding to the three edges will be denoted $S_1$, $S_2$ and $S_3$, respectively. We consider a unitary 
$$ u = S_{11}S_1^* + S_{12}S_2^* + S_2S_{31}^* + S_3S_{32}^*, $$
whose coding graph $(F_\J,\LL_\J)$ is shown below. 
\[ \beginpicture
\setcoordinatesystem units <1cm,1cm>
\setplotarea x from -7 to 7, y from -3 to 3
\put {$e_2$} at 4.5 -2.2
\setlinear 
\plot 3 2.6  5 2.6  5 1  3 1  3 2.6 /
\plot 3 1.8  5 1.8 /
\plot 4 1  4 1.8 /
\put {$e_2$} at 4 2.2
\put {$e_1e_2$} at 3.5 1.4
\put {$w$} at 4.5 1.4
\setlinear 
\plot -5 2.6  -3 2.6  -3 1  -5 1  -5 2.6 /
\plot -5 1.8  -3 1.8 /
\plot -4 1  -4 1.8 /
\put {$e_1$} at -4 2.2
\put {$e_1e_1$} at -4.5 1.4
\put {$v$} at -3.5 1.4
\setlinear 
\plot -5 -2.6  -3 -2.6  -3 -1  -5 -1  -5 -2.6 /
\plot -5 -1.8  -3 -1.8 /
\plot -4 -2.6  -4 -1.8 /
\put {$e_3$} at -4 -1.4
\put {$e_2$} at -4.5 -2.2
\put {$e_1$} at -3.5 -2.2
\put {$e_3$} at 4 -1.4
\put {$e_3$} at 3.5 -2.2
\setlinear 
\plot 4 1  4 -1 /
\plot -3 1.8  3 1.8 /
\plot -3 -1  3 1 /
\plot 3 -2.6  5 -2.6  5 -1  3 -1  3 -2.6 /
\plot 3 -1.8  5 -1.8 /
\plot 4 -2.6  4 -1.8 /
\plot -3 -1.8  3 -1.8 /
\plot -4.3 1  -4.3 -1 /
\plot -3.7 1  -3.7 -1 /
\circulararc  360 degrees from -5 1.8  center at -5.7 1.8
\arrow <0.235cm> [0.2,0.6] from -6.4 1.9 to -6.4 1.7
\arrow <0.235cm> [0.2,0.6] from 2.8 1.8 to 3 1.8
\arrow <0.235cm> [0.2,0.6] from -2.8 -1.8 to -3 -1.8
\arrow <0.235cm> [0.2,0.6] from 4 -0.8 to 4 -1
\arrow <0.235cm> [0.2,0.6] from -4.3 0.8 to -4.3 1
\arrow <0.235cm> [0.2,0.6] from -3.7 -0.8 to -3.7 -1
\arrow <0.235cm> [0.2,0.6] from 2.7 0.9   to 3 1
\put {$S_{11}$} at -6.75 1.8
\put {$S_{12}$} at 0 2.1
\put {$S_{1}$} at -4.6 0
\put {$S_2$} at -3.4 0
\put {$S_3$} at 4.3 0
\put {$S_{2}$} at 0.4  -0.2
\put {$P_w$} at 0 -2.1
\endpicture \]
The labeled graph $(F_\J,\E)$ is left-synchronizing, and thus $\Lambda_\J\in\aut(\D_{L_\Zb(F)})$. 
However, $\Lambda_\J\not\in\aut(L_\Zb(F))$. 
Indeed, we have $\Lambda_\J(S_1)=S_{11}$, $\Lambda_\J(S_2)=S_{12}$ and $\Lambda_\J(S_3)=S_2S_1^*+S_3S_2^*$. 
Thus $\Lambda_\J$ maps all three generators to elements that are homogeneous of degree $2$.
Hence the image of $\Lambda_\J$ will be in the span of homogeneous elements of even degree, which is a proper subalgebra of $L_\Zb(F)$.
The same conclusion holds at the level of the graph $C^*$-algebra. 
\end{example}


\medskip\noindent
Rune Johansen\\
Department of Mathematics\\
University of Copenhagen\\
Universitetsparken 5, 2100 K\o benhavn \O, Denmark\\
E-mail: rune@cyx.dk\\

\smallskip\noindent
Adam P. W. S{\o}rensen\\
Department of Mathematics\\
University of Oslo\\
PO BOX 1053 Blindern, N-0316 Oslo, Norway\\
E-mail: apws@math.uio.no\\

\smallskip\noindent
Wojciech Szyma{\'n}ski\\
Department of Mathematics and Computer Science \\
The University of Southern Denmark \\
Campusvej 55, DK--5230 Odense M, Denmark \\
E-mail: szymanski@imada.sdu.dk


\begin{thebibliography}{10} 

\bibitem{AAP}   G. Abrams and G. Aranda Pino, 
{\it The Leavitt path algebra of a graph}, 
J. Algebra {\bf 293}  (2005),  319--334. 

\bibitem{ABHS} P. Ara, J. Bosa, R. Hazrat and A Sims, 
{\it Reconstruction of graded groupoids from graded Steinberg algebras},
Forum Math. {\bf 29}  (2017),  1023--1037. 

\bibitem{AMP} P. Ara, M. A. Moreno and E. Pardo, 
{\it Nonstable {$K$}-theory for graph algebras},
Algebr. Represent. Theory {\bf 10}  (2007),  157--178. 

\bibitem{AJSz}  J. E. Avery, R. Johansen and W. Szyma\'{n}ski, 
{\it Visualizing automorphisms of graph $C^*$-algebras}, 
Proc. Edinburgh Math. Soc. {\bf 61} (2018), 215--249. 

\bibitem{BHSz}  S. Barlak, J. H. Hong and W. Szyma\'{n}ski, 
{\it Endomorphisms of the Cuntz algebras and the Thompson groups,} 
arXiv:1703.06798, to appear in Studia Math.

\bibitem{bb} J. Belk and C. Bleak, 
{\it Some undecidability results for asynchronous transducers and the Brin-Thompson group $2V$}, 
Trans. Amer. Math. Soc. {\bf 369}  (2017),  3157--3172. 

\bibitem{bj} O. Bratteli and P. E. T. Jorgensen, 
{\it Iterated function systems and permutation representations of the Cuntz algebra}, 
Mem. Amer. Math. Soc. {\bf 139}  (1999),  663. 

\bibitem{CFST} L. O. Clark, C. Farthing, A. Sims and M. Tomforde,
{\it A groupoid generalisation of {L}eavitt path algebras}, 
Semigroup Forum {\bf 89}  (2014),  501--517. 

\bibitem{CHSam} R. Conti, J. H. Hong and W. Szyma{\'n}ski, 
{\it The Weyl group of the Cuntz algebra,} 
Adv. Math. {\bf 231} (2012), 3147--3161.

\bibitem{CHSjfa} R. Conti, J. H. Hong and W. Szyma{\'n}ski, 
{\it Endomorphisms of graph algebras}, 
J. Funct. Anal. {\bf 263}  (2012),  2529--2554. 

\bibitem{cksz} R. Conti, J. Kimberley and W. Szyma\'{n}ski, 
{\it More localized automorphisms of the Cuntz algebras}, 
Proc. Edinb. Math. Soc. (2) {\bf  53}  (2010),  619--631. 

\bibitem{CStams} R. Conti and W. Szyma{\'n}ski, 
{\it Labeled trees and localized automorphisms of the Cuntz algebras,} 
Trans. Amer. Math. Soc. {\bf 363} (2011), 5847--5870. 

\bibitem{Cun2}  J. Cuntz,
{\it Automorphisms of certain simple $C^*$-algebras},
in {\it Quantum fields-algebras-processes}, ed. L. Streit, 187--196, Springer, 1980. 

\bibitem{CK} J. Cuntz and W. Krieger, 
{\it A class of $C^*$-algebras and topological Markov chains}, 
Invent. Math. {\bf 56}  (1980), 251--268.

\bibitem{russian} R. I. Grigorchuk, V. V. Nekrashevich and V. I. Sushchanskii, 
{\it  Automata, dynamical systems, and groups}, 
Tr. Mat. Inst. Steklova {\bf 231}  (2000),  Din. Sist., Avtom. i Beskon. Gruppy, 134--214;  translation in  Proc. Steklov Inst. Math.  
2000,  no. 4(231), 128--203.  

\bibitem{HPP} A. Hopenwasser, J. R. Peters and S. C. Power, 
{\it Subalgebras of graph $C^*$-algebras}, 
New York J. Math. {\bf 11} (2005), 351--386. 

\bibitem{i} M. Izumi, {\it Subalgebras of infinite $C^*$-algebras with
finite Watatani indices. I. Cuntz algebras}, Commun. Math. Phys. {\bf 155}
(1993), 157--182.

\bibitem{JS}  R. Johansen and A. P. W. S{\o}rensen, 
{\it The Cuntz splice does not preserve $*$-isomorphisms of Leavitt path algebras over $\Zb$}, 
J. Pure Apl. Algebra {\bf 220} (2016), 3966--3983. 

\bibitem{Kaw2} K. Kawamura,
{\it Branching laws for polynomial endomorphisms of Cuntz algebras arising from
permutations},  Lett. Math. Phys.  {\bf 77}  (2006),   111--126.

\bibitem{KPR}  A. Kumjian, D. Pask and I. Raeburn, 
{\it Cuntz-Krieger algebras of directed graphs}, 
Pacific J. Math. {\bf 184} (1998), 161--174. 

\bibitem{KPRR}  A. Kumjian, D. Pask, I. Raeburn and J. Renault  
{\it Graphs, groupoids, and {C}untz-{K}rieger algebras}, 
J. Funct. Anal. {\bf 144} (1997), 505--541. 


\bibitem{RaeburnBook}  I. Raeburn, 
{\em Graph algebras}. 
CBMS Regional Conference Series in Mathematics, 103. American Mathematical Society, Providence, RI, 2005.

\bibitem{Rigo} M. Rigo, 
{\em Formal languages, automata and numeration systems. 1}. 
ISTE, London; John Wiley \& Sons, Inc., Hoboken, NJ, 2014.

\bibitem{Steinberg} B. Steinberg, 
{\it A groupoid approach to discrete inverse semigroup algebras}, 
Adv. Math. {\bf 223} (2010), 689--727. 

\bibitem{T2}  M. Tomforde, 
{\it Uniqueness theorems and ideal structure for {L}eavitt path algebras}, 
J. Algebra {\bf 318} (2007), 270--299. 

\bibitem{T}  M. Tomforde, 
{\it Leavitt path algebras with coefficients in a commutative ring}, 
J. Pure Appl. Algebra {\bf 215} (2011), 471--484. 



\end{thebibliography}
\end{document}